\documentclass[a4paper,12pt]{amsart}

\usepackage{amsmath, amssymb}
\usepackage{tikz}
\usepackage{float}

\setlength{\textwidth}{15cm}         
\setlength{\evensidemargin}{.5cm}
\setlength{\oddsidemargin}{.5cm}
\setlength{\topmargin}{-0.5cm}
\setlength{\textheight}{690pt}

\allowdisplaybreaks

\newtheorem{thm}{Theorem}[section]
\newtheorem{cor}[thm]{Corollary}
\newtheorem{lem}[thm]{Lemma}
\newtheorem{prop}[thm]{Proposition}

\theoremstyle{definition}

\newtheorem*{conv}{Convention}

\newtheorem*{standing}{Standing assumption}

\theoremstyle{remark}
\newtheorem{rmk}[thm]{Remark}
\newtheorem{example}[thm]{Example}

\numberwithin{equation}{section}

\newcommand{\N}{\mathbb{N}}
\newcommand{\Z}{\mathbb{Z}}
\newcommand{\R}{\mathbb{R}}
\newcommand{\RR}{\mathbb{R}}
\newcommand{\OO}{\mathcal{O}}
\newcommand{\TT}{\mathbb{T}}

\def\Tt{\mathcal{T}}
\def\Cc{\mathcal{C}}

\newcommand{\TC}{\mathcal{T}C}
\newcommand{\Aut}{\operatorname{Aut}}
\newcommand{\clsp}{\overline{\text{span}}}
\newcommand{\QE}{\operatorname{QE}}

\newcommand{\lspan}{\operatorname{span}}
\def\clsp{\operatorname{\overline{span}}}

\date{\today}
\thanks{This research was supported by the Marsden Fund of the Royal Society of New Zealand.}

\begin{document}

\author[an Huef]{Astrid an Huef}
\address{Astrid an Huef, Department of Mathematics and Statistics, University of Otago, PO Box 56, Dunedin 9054, New Zealand.}
\email{astrid@maths.otago.ac.nz}

\author[Kang]{Sooran Kang}
\address{Sooran Kang, Department of Mathematics, Sungkyunkwan University, Seobu-ro 2066, Jangan-gu, Suwon, 16419, Republic of Korea.}
\email{sooran@skku.edu}

\author[Raeburn]{Iain Raeburn}
\address{Iain Raeburn, Department of Mathematics and Statistics, University of Otago, PO Box 56, Dunedin 9054, New Zealand.}
\email{iraeburn@maths.otago.ac.nz}

\title[KMS states on algebras of reducible graphs]{KMS states on the operator algebras of reducible higher-rank graphs}

\begin{abstract}
We study the  equilibrium or KMS states of the Toeplitz $C^*$-algebra of a finite higher-rank graph which is reducible.  The Toeplitz algebra carries a gauge action of a higher-dimensional torus, and  a dynamics arises  by choosing an embedding of the real numbers in the torus. Here we use an embedding  which leads to a dynamics which has previously been identified as ``preferred'', and we scale the dynamics so that $1$ is a critical inverse  temperature.  As with $1$-graphs, we study the strongly connected components of the vertices of the graph.  The behaviour of the KMS states depends on both the graphical relationships between the components and the relative size of the spectral radii of the vertex matrices of the components. 
 
We test our theorems on graphs with two  connected components.  We find that our techniques give a complete analysis of the KMS states with inverse temperatures down to a second critical temperature $\beta_c<1$.  
\end{abstract}

\date{October 21, 2016}
\keywords{Higher-rank graph, Toeplitz $C^*$-algebra, KMS state}
\subjclass[2010]{46L30, 46L55}

\maketitle

\section{Introduction}

There has recently been a great deal of interest in the KMS states on $C^*$-algebras of directed graphs and higher-rank graphs. The subject started with the theorem of Enomoto, Fujii and Watatani which says that a simple Cuntz-Krieger algebra admits a unique KMS state \cite{EFW}. This theorem was subsequently extended to $C^*$-algebras of finite graphs with sources, where the presence of sources gives rise to other KMS states \cite{KW}. In another direction, Exel and Laca made the important observation that the Toeplitz extension of a Cuntz-Krieger algebra has a much more abundant supply of KMS states, and conducted an extensive analysis of the possible phase transitions \cite{EL}. 

Following the detailed analysis of KMS states on Toeplitz algebras in \cite{LR} and \cite{LRR}, which includes an explicit construction of all KMS states above a critical inverse temperature, an analogous construction was carried out in \cite{aHLRS1} for the Toeplitz algebras of finite graphs from \cite{FR}.  When the Cuntz-Krieger quotient  is simple, the simplex of KMS states collapses at the critical inverse temperature to the single state of \cite{EFW}. The analysis of \cite{aHLRS1} was subsequently extended to reducible graphs in \cite{aHLRS4} (see also \cite{aHRabel}), where the authors found subtle interactions between the ideal structure of the algebra and the behaviour of the KMS states at critical inverse temperatures (of which there can be several for reducible graphs). 

The analysis of KMS states on the  algebras of directed graphs was extended to higher-rank graphs in \cite{aHLRS2, aHLRS3}. One big difference in the higher-rank case is the choice of dynamics: for finite graphs, one lifts the gauge action of $\TT$ to an action of $\RR$, and it doesn't much matter how one does this. (Although other interesting dynamics on graph algebras have been studied by other authors \cite{EL, IK, CL, T,C, CT, M}.) For a graph of rank $k$ (or \emph{$k$-graph}), the natural gauge action on the Toeplitz and graph algebras is an action of $\TT^k$; we can lift this to an action of $\R$ by choosing an embedding $x\mapsto e^{irx}$ of $\RR$ in $\TT^k$, but it matters very much which embedding we choose. There is a \emph{preferred dynamics} for which all $k$ directions go critical at the same inverse temperature, and for which the results in \cite{aHLRS2, aHLRS3} are optimal. This is made precise in \cite{aHLRS2}. In the sequel \cite{aHLRS3}, we showed that the KMS states for the preferred dynamics reflect the internal structure of the graph algebra: there is a unique KMS state at the critical inverse temperature if and only if the algebra is simple.

Here we investigate the KMS states for the preferred dynamics on the Toeplitz algebra of  a reducible $k$-graph $\Lambda$, for which the graph algebra is never simple. Above the largest critical inverse temperature $\beta_c$, the states on the Toeplitz algebra are described by the results in \cite{aHLRS2}. So we concentrate here on what happens at the inverse temperature $\beta_c$, and we scale the dynamics so that $\beta_c=1$. As for $1$-graphs in \cite{aHLRS4}, we need to understand the strongly connected components in the vertex set $\Lambda^0$. The behaviour of the KMS states depends on both the graphical relationships between these components and the relationships between the spectral radii of the different components.

We prove two main results describing different ways in which individual components can influence the behaviour of KMS states on the Toeplitz algebra of $\Lambda$. The first focuses on the role of critical components, which are loosely speaking the components where at least one vertex matrix attains its spectral radius.  For such components, we can often disregard other components which feed into them, and thereby replace our given graph with a smaller one (Theorem~\ref{crit}). The second concerns the opposite situation in which a critical component is hereditary, so that no other components feed into it. In this situation we can adapt techniques from \cite[\S4]{aHLRS4} to extend KMS states of the simple Toeplitz algebra of the component graph to KMS states of the Toeplitz algebra of the whole graph (Theorem~\ref{thm:case2-modified}). We find it quite remarkable that this works as well as it does: the crucial observation is that the commutativity of the $k$ different vertex matrices of a $k$-graph has some powerful consequences for their common Perron-Frobenius theory.

We then illustrate our results by applying them to  families of graphs with  only one or two components. When the graph has three components, there are obviously more possibilities, and we encountered technical difficulties.  The crucial problem is that our strategy involves reducing the problem to smaller graphs by removing hereditary subsets, and when we do this, the induced dynamics on the quotient may no longer be the preferred one.  Since new issues arise, we hope to  pursue this elsewhere. 

\subsection*{Contents} We begin with a section summarising our conventions about higher-rank graphs, their strongly connected components and their Toeplitz algebras. We also include a result describing the ideal and quotient of the Toeplitz algebra associated to a hereditary set of vertices (Proposition~\ref{modhered}), and a brief summary of KMS states. Then we discuss the structure of the vertex matrices of a reducible $k$-graph. It is well-known that for a single non-negative matrix one can order the index set so that the matrix is block upper-triangular, with all the irreducible submatrices appearing as diagonal blocks. (See \cite[\S1.2]{Sen}, or \cite[\S2.3]{aHLRS4} for a version using graphical terminology.) We show that because the vertex matrices of a $k$-graph commute, we can simultaneously block upper-triangularise all the vertex matrices  (Proposition~\ref{utstructure}). This will allow us to exploit the common Perron-Frobenius theory of the vertex matrices of irreducible components \cite[Lemma~2.1]{aHLRS2}, \cite[\S3]{aHLRS3}. We first use these decompositions to discuss the preferred dynamics (see \S\ref{subsecpreferred}).

 We prove the first of our main theorems in \S\ref{secmin}. We show that a crucial role is played by strongly connected components where the spectral radii of the vertex matrices are attained. Theorem~\ref{crit} shows how we can often remove a large hereditary set of vertices without affecting the KMS$_1$ states for the preferred dynamics. Then one hopes to be in a situation where the spectral radii are achieved on a hereditary component, and Theorem~\ref{thm:case2-modified} describes the KMS$_1$ states in this situation. The proof of this theorem is long, and involves a careful analysis of the simultaneous Perron-Frobenius theory of the vertex matrices. 
 
We then turn to an analysis of examples. Graphs with a single strongly connected component were studied in \cite{aHLRS2,aHLRS3}, and we briefly summarise the results in \S\ref{sec1cpt}. We then look at graphs with two strongly connected components. We first investigate the graph-theoretic implications of this assumption, and then we analyse the KMS states under some mild hypotheses on the structure of the graph: we assume, for example, that there are no sinks or sources, that the subgraphs associated to the components are coordinatewise irreducible, and that there are paths from one component to the other. Our analysis of graphs with two components in \S\ref{KMS2cpts} is satisfyingly complete. At inverse temperatures $\beta<1$, we have to deal with non-preferred dynamics on quotients, but the results of \cite{aHKR} suffice to cover this.

Our analysis proceeds, as in \cite{aHLRS4} for $1$-graphs, by reducing problems to smaller graphs where our stronger theorems apply. One wrinkle which we have noticed is that, even though we are happy to assume that our $k$-graphs have no sinks or sources, this property is not necessarily preserved when we pass to quotients. In our final section, we discuss how the results of \cite{aHLRS2} need to be adjusted to cover graphs with sources. We find, as in \cite{KW, aHLRS1}, that they can give rise to extra KMS$_\beta$ states at many different inverse temperatures, and that these KMS states often factor through states of the graph algebra $C^*(\Lambda)$ (Proposition~\ref{KMSsources}). We finish with a short appendix  containing some elementary observations about graphs with a single vertex, as in \cite{DY}.

\section{Background}

\subsection{Higher-rank graphs and their algebras}\label{hrgraphs} A higher-rank graph of rank $k$, or $k$-graph, consists of a countable category $\Lambda$ and a functor $d:\Lambda\to \N^k$ satisfying the facorisation property: if $\lambda\in \Lambda^m:=d^{-1}(m)$ and $m=n+p$, then there exist unique $\mu\in \Lambda^n$ and $\nu\in \Lambda^p$ such that $\lambda=\mu\nu$. We write $\Lambda^0$ for the set of objects, which we call vertices, $r,s:\Lambda\to\Lambda^0$ for the codomain and domain maps, and call elements of $\Lambda^n$ paths of degree $n$. We view the vertex set $\Lambda^0$ as a subset $\Lambda$ of by identifying a vertex with the identity morphism at that vertex. 

In general, we use the usual conventions of the subject. For example, for $v,w\in \Lambda^0$ and $n\in \N^k$, $v\Lambda^nw$ denotes the set of paths $\lambda$ of degree $n$ with $r(\lambda)=v$ and $s(\lambda)=w$. All the graphs in this paper are finite in the sense that $\Lambda^n$ is finite for each $n\in \N^k$.  We write $\{e_i:1\leq i\leq k\}$ for the usual basis of $\N^k$. 

We visualise a $k$-graph by drawing its skeleton, which is the coloured directed graph $(\Lambda^0,\Lambda^1:=\bigcup_iv\Lambda^{e_i},r,s)$ in which all the edges of each degree $e_i$ have been coloured with one of $k$ different colours.  We refer to \cite[\S2]{RSY1} and \cite[Chapter~10]{R} for discussions of $k$-graphs and their skeletons, and the precise relationship is described in \cite{HRSW}. We can also view the  skeleton as an algebraic object by replacing it with the vertex matrices $A_i$, which are the $\Lambda^0\times \Lambda^0$ matrices $A_i$ with integer entries \[A_i(v,w)=|v\Lambda^{e_i}w|.\] The factorisation property then implies that the matrices $\{A_i:1\leq i\leq k\}$ pairwise commute. Because the matrices commute, it makes sense to define $A^n:=\prod_{i=1}^kA_{i}^{n_i}$ for $n\in \N^k$, and then the factorisation property implies that $A^n$ has entries $A^n(v,w)=|v\Lambda^nw|$.

When $k=2$, we call elements of $\Lambda^{e_1}$ blue edges and those of $\Lambda^{e_2}$ red edges. When we draw skeletons of $2$-graphs, we use the following convention:

\begin{conv} When we draw 
\[
\begin{tikzpicture}[scale=1.5]
 \node[inner sep=0.5pt, circle] (v) at (0,0) {$v$};
    \node[inner sep=0.5pt, circle] (w) at (2,0) {$w$};
\draw[-latex, blue] (v) edge [out=345, in=195]   (w);
\draw[-latex, red, dashed] (w) edge [out=165, in=15]   (v);
\node at (1.0,-.35) {$6$};
\node at (1.0,.35) {$5$}; 
\end{tikzpicture}
\]
in the skeleton of a $2$-graph, we mean that there are $5$ red edges from $w$ to $v$ and $6$ blue edges from $v$ to $w$.
\end{conv}

\subsection{Strongly connected components}\label{subseccomp} Suppose that $\Lambda$ is a $k$-graph. There is a relation $\leq$ on $\Lambda^0$ defined by $v\leq w\Longleftrightarrow v\Lambda w\not=\emptyset$, and this gives an equivalence relation on the vertex set $\Lambda^0$ such that 
\[
v\sim w\Longleftrightarrow v\leq w \text{ and } w\leq v.
\]
We call the equivalence classes \emph{strongly connected components} of $\Lambda$. It is possible that for a vertex $v$ we have $v\Lambda v=\{v\}$, and then $\{v\}$ is an equivalence class. We call such classes \emph{trivial} components, and focus on the set $\Cc$ of nontrivial strongly connected components.

For each $C\in \Cc$, the set $\Lambda_C:=C\Lambda C$ is naturally a $k$-graph. Indeed, what needs to be checked here is that all factorisations of paths in $\Lambda_C$ are themselves in $\Lambda_C$, and there are other subsets of $\Lambda^0$ which have this property: for example, any set $S$ which is \emph{hereditary} ($v\in S$ and $v\leq w$ imply $w\in S$) or \emph{forwards hereditary} ($v\in S$ and $w\leq v$ imply $w\in S$). We denote the vertex matrices of $\Lambda_C$ by $A_{C,i}$. 

Recall that an $n\times n$  matrix $A$ is \emph{irreducible}, if for all $(i,j)$ there exists $m$ such that $A^m(i,j)\neq 0$. 
We will frequently ask that the matrices $\{A_{C,i}:1\leq i\leq k\}$ are all irreducible, in which case the graph $\Lambda_C$ is coordinatewise irreducible in the sense of \cite{aHLRS2}. Since the vertex matrices in any $k$-graph commute,  we then know from \cite[Lemma~2.1]{aHLRS2} that the matrices $\{A_{C,i}\}$ have a common unimodular Perron-Frobenius eigenvector with eigenvalues $\rho(A_{C,i})$ for the matrices $A_{C,i}$.

\subsection{The Toeplitz algebra of a $k$-graph.} Suppose that $\Lambda$ is a finite $k$-graph. For $\mu,\nu\in v\Lambda$, we write
\[
\Lambda^{\min}(\mu,\nu):=\big\{(\eta,\zeta)\in \Lambda\times \Lambda:\mu\eta=\nu\zeta,\ d(\mu\eta)=d(\mu)\vee d(\nu)\big\}.
\]
Following \cite{RS}, a \emph{Toeplitz-Cuntz-Krieger $\Lambda$-family} $\{T,Q\}$ consists
of partial isometries $\{T_\lambda:\lambda\in\Lambda\}$ such that
\begin{itemize}
\item[(T1)] $\{Q_v:=T_v:v\in \Lambda^0\}$ are mutually orthogonal projections;
\item[(T2)] $T_\lambda T_\mu=T_{\lambda\mu}$ whenever $\lambda,\mu\in\Lambda$ with $s(\lambda)=r(\mu)$;
\item[(T3)] $T_\lambda^*T_\lambda=Q_{s(\lambda)}$ for all $\lambda\in\Lambda$;
\item[(T4)] for all $v\in \Lambda^0$ and $n\in\N^k$, we have $Q_v\geq\sum_{\lambda\in v\Lambda^n} T_\lambda T_\lambda^*$;
\item[(T5)] for all $\mu,\nu\in \Lambda$, we have $T_\mu^*T_\nu=\sum_{(\eta,\zeta)\in \Lambda^{\min}(\mu,\nu)} T_\eta T_\zeta^*$.
\end{itemize}
(As usual, we interpret any empty sums in (T4) and (T5) as $0$.) The \emph{Toeplitz algebra} $\TC^*(\Lambda)$ is generated by a universal Toeplitz-Cuntz-Krieger $\Lambda$-family $\{t,q\}$.

\begin{rmk}
For clarity, we have used the same Toeplitz-Cuntz-Krieger relations as \cite{aHLRS2}. In fact they contain redundancy: (T3) follows immediately from (T5) since $\Lambda^{\min}(\lambda,\lambda)=\{s(\lambda)\}$, and, less obviously, (T4) can also be deduced from (T5) (see \cite[Lemma~2.7]{RSY2}).
\end{rmk}

When the graph $\Lambda$ has no sources, the Cuntz-Krieger algebra or graph algebra $C^*(\Lambda)$ is usually taken to be the quotient of $\TC^*(\Lambda)$ in which the inequalities in (T4) become equalities. However, we are going to run into graphs with sources, in which case $v\Lambda^n$ could be empty for some $n$, and then, even though our graphs are finite, we need to use the definition of Cuntz-Krieger family from \cite{RSY2} (see \S\ref{sources} below).

For every $k$-graph, the Toeplitz algebra $\TC^*(\Lambda)$ carries a natural gauge action of $\TT^k$, which is characterised by 
\[
\gamma_z(t_\lambda)=z^{d(\lambda)}t_\lambda:=\big(\textstyle{\prod_{i=1}^k z_i^{d(\lambda)_i}}\big)t_\lambda.
\]
Since each $q_v-\sum_{\lambda\in v\Lambda^n} t_\lambda t_\lambda^*$ is fixed by each $\gamma_z$, this action induces a similar action on the graph algebra $C^*(\Lambda)$, which we also denote by $\gamma$.

\subsection{Ideals and quotients of Toeplitz algebras} Suppose that $\Lambda$ is a $k$-graph. A subset $H$ of $\Lambda^0$ is \emph{hereditary} if $v\in H$ and $v\leq w$ imply $w\in H$. Then the set $\Lambda^0\backslash H$ is forwards hereditary, in the sense that $v\leq w$ and $w\in \Lambda^0\backslash H$ imply $v\in \Lambda^0\backslash H$. Then, as we observed in \S\ref{subseccomp}, the subset $(\Lambda^0\backslash H)\Lambda(\Lambda^0\backslash H)=\Lambda_{\Lambda^0\backslash H}$ is itself a $k$-graph, which we denote by $\Lambda\backslash H$. (Following \cite{RSY1} rather than \cite{S2}, where it was written $\Lambda\backslash \Lambda H$.) The following result is well-known.

\begin{prop}\label{modhered}
Suppose that $\Lambda$ is a $k$-graph, that $H$ is a hereditary subset of $\Lambda^0$, and that $I_H$ is the ideal in $\TC^*(\Lambda)$ generated by $\{q_v:v\in H\}$. 
\begin{enumerate}
\item\label{formIH} We have $I_H=\clsp\{t_\lambda t_\mu^*:s(\lambda)=s(\mu)\in H\}$.
\item\label{hered} There is a homomorphism $q_H$ of  $\TC^*(\Lambda)$ onto 
\[
\TC^*(\Lambda\backslash H)=C^*(p_v,s_\lambda:v,s(\lambda)\in \Lambda^0\backslash H)
\] 
such that $q_H(q_v)=p_v$ for $v\in \Lambda^0\backslash H$ and $q_H(t_\lambda)=s_\lambda$ for $\lambda\in \Lambda\backslash H$.
\item\label{split} There is a homomorphism $\pi$ of $\TC^*(\Lambda \backslash H)=C^*(p,s)$ into $\TC^*(\Lambda)=C^*(q,t)$ such that $\pi(s_\lambda)=t_\lambda$ and $\pi(p_v)=q_v$ for $v\in \Lambda^0\backslash H$ and $\lambda\in \Lambda \backslash H$, and $\pi$ is a splitting for the extension 
\begin{equation*}
0\longrightarrow I_H\longrightarrow \TC^*(\Lambda)\overset{q_H}{\longrightarrow} \TC^*(\Lambda \backslash H)\longrightarrow 0;
\end{equation*}
in other words, $q_H\circ \pi(b)=b$ for $b\in\TC^*(\Lambda \backslash H)$.
\item\label{corneriso} With $P=q|_H$ and $S=t|_{\Lambda H}$, the homomorphism $\pi_{S,P}$ is an isomorphism of $\TC^*(\Lambda_H)$ onto the full corner $p\TC^*(\Lambda)p$ associated to the projection $p=\sum_{v\in H}P_v$ (interpreted as a strict sum if $H$ is infinite, see \cite[Lemma~2.10]{R}).
\end{enumerate}
\end{prop}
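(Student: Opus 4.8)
The plan is to derive all four statements from the universal properties of the Toeplitz algebras involved, the standard spanning description $\TC^*(\Lambda)=\clsp\{t_\mu t_\nu^*:\mu,\nu\in\Lambda,\ s(\mu)=s(\nu)\}$ (which follows by using (T2), (T3) and (T5) to collapse arbitrary products of the generators and their adjoints), and the gauge-invariant uniqueness theorem for Toeplitz algebras of finitely aligned $k$-graphs. For part~(a), write $J:=\clsp\{t_\lambda t_\mu^*:s(\lambda)=s(\mu)\in H\}$. Since $t_\lambda t_\mu^*=t_\lambda q_{s(\lambda)}t_\mu^*$ with $q_{s(\lambda)}\in I_H$ when $s(\lambda)\in H$, and since $q_v\in J$ for $v\in H$, it suffices to check that $J$ is a closed two-sided $*$-ideal; this is routine, using (T2) and (T5) to rewrite $t_\nu t_\lambda t_\mu^*$, $t_\nu^* t_\lambda t_\mu^*$ and their adjoints, and observing that every resulting term $t_{\lambda'}t_{\mu'}^*$ has $s(\lambda')=s(\mu')$ reachable from $s(\lambda)$ or $s(\mu)$ along a path, so heredity of $H$ forces $s(\lambda')\in H$.

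For part~(b) and the exact sequence in part~(c), I would construct $q_H$ from the universal property of $\TC^*(\Lambda)$. Put $Q_v:=p_v$ for $v\in\Lambda^0\backslash H$, $Q_v:=0$ for $v\in H$, $T_\lambda:=s_\lambda$ for $\lambda\in\Lambda\backslash H$, and $T_\lambda:=0$ otherwise (using that heredity gives $s(\lambda)\notin H\Rightarrow r(\lambda)\notin H$). Then $\{T,Q\}$ is a Toeplitz-Cuntz-Krieger $\Lambda$-family in $\TC^*(\Lambda\backslash H)$: (T1)--(T3) are clear, (T4) holds because edges $\lambda\in v\Lambda^n$ with $s(\lambda)\in H$ contribute $0$, and for (T5) the terms of $\sum_{(\eta,\zeta)\in\Lambda^{\min}(\mu,\nu)}T_\eta T_\zeta^*$ with $s(\eta)\in H$ vanish (heredity: $r(\eta)=s(\mu)\in H$ forces $s(\eta)\in H$), while the rest are indexed by $(\Lambda\backslash H)^{\min}(\mu,\nu)$. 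This gives $q_H$ with the stated values; it is surjective and kills $I_H$. For $\ker q_H\subseteq I_H$ I would show the induced map $\TC^*(\Lambda)/I_H\to\TC^*(\Lambda\backslash H)$ has an inverse, obtained from the universal property of $\TC^*(\Lambda\backslash H)$ applied to $\{q_v+I_H,\ t_\lambda+I_H\}$: here (T5) holds modulo $I_H$ precisely because, by part~(a), the ``escaping'' terms $t_\eta t_\zeta^*$ with $s(\eta)=s(\zeta)\in H$ lie in $I_H$. Comparing the two composites on generators completes part~(b) and establishes the exact row in part~(c).

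For the splitting $\pi$ in part~(c), the natural route is once more the universal property of $\TC^*(\Lambda\backslash H)$, applied now to the family $\{q_v,\ t_\lambda:v\in\Lambda^0\backslash H,\ \lambda\in\Lambda\backslash H\}$ inside $\TC^*(\Lambda)$; relations (T1)--(T4) are inherited from $\TC^*(\Lambda)$ verbatim, and once $\pi$ exists, $q_H\circ\pi=\mathrm{id}$ is immediate on generators. The substantive point -- and the step I expect to be the main obstacle -- is (T5): one must verify $t_\mu^* t_\nu=\sum_{(\eta,\zeta)\in(\Lambda\backslash H)^{\min}(\mu,\nu)}t_\eta t_\zeta^*$ for $\mu,\nu\in\Lambda\backslash H$, which amounts to showing $(\Lambda\backslash H)^{\min}(\mu,\nu)=\Lambda^{\min}(\mu,\nu)$, i.e.\ that a minimal common extension in $\Lambda$ of two paths avoiding $H$ again avoids $H$. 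This is exactly where heredity of $H$ has to be used, and it carries the weight of the whole proposition.

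For part~(d), take $S=t|_{\Lambda_H}$ and $P=q|_H$. That $\{S,P\}$ is a Toeplitz-Cuntz-Krieger $\Lambda_H$-family in $\TC^*(\Lambda)$ is straightforward -- in particular (T5) causes no trouble, since for $\mu,\nu\in\Lambda_H$ every pair in $\Lambda^{\min}(\mu,\nu)$ already lies in $\Lambda_H$ because $r(\eta)=s(\mu)\in H$ -- so the universal property yields $\pi_{S,P}\colon\TC^*(\Lambda_H)\to\TC^*(\Lambda)$. Its range is $\clsp\{t_\mu t_\nu^*:\mu,\nu\in\Lambda_H,\ s(\mu)=s(\nu)\}$, which by the spanning computation of part~(a) is precisely $p\,\TC^*(\Lambda)\,p$ with $p=\sum_{v\in H}q_v$ (a strict sum when $H$ is infinite), and this also shows $p\,\TC^*(\Lambda)\,p=p\,I_H\,p$. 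Injectivity of $\pi_{S,P}$ I would get from the gauge-invariant uniqueness theorem: $\pi_{S,P}$ intertwines the gauge action on $\TC^*(\Lambda_H)$ with the restriction to the $\gamma$-fixed corner $p\,\TC^*(\Lambda)\,p$ of the gauge action on $\TC^*(\Lambda)$, and the defect projections $\prod_{i\in E}\big(q_v-\sum_{\lambda\in v\Lambda^{e_i}}t_\lambda t_\lambda^*\big)$ are nonzero in $\TC^*(\Lambda)$ because the universal family of $\TC^*(\Lambda)$ has that property. Hence $\pi_{S,P}$ is an isomorphism onto $p\,\TC^*(\Lambda)\,p$, and a short argument with part~(a) (writing $t_\lambda t_\mu^*=(t_\lambda q_{s(\mu)})\,p\,(q_{s(\mu)}t_\mu^*)$) shows this corner is full in $I_H$.
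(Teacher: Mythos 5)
Parts (a), (b) and (d) of your proposal are essentially sound. Your argument for (a) is the paper's argument. For (b) the paper simply appeals to \cite{SWW} and to the method of \cite[Proposition~2.1]{aHLRS4}, whereas you build $q_H$ and an inverse for the induced map on $\TC^*(\Lambda)/I_H$ directly from the two universal properties; this works, and your key observation -- that the terms of (T5) whose sources escape into $H$ lie in $I_H$ by part (a) -- is exactly what makes the quotient family Toeplitz-Cuntz-Krieger for $\Lambda\backslash H$. In (d) you correctly note that (T5) causes no trouble because $r(\eta)=s(\mu)\in H$ and heredity force every minimal common extension of paths of $\Lambda_H$ back into $\Lambda_H$; together with the uniqueness theorem of \cite[Theorem~8.1]{RS} this is the paper's argument.

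The gap is in (c), and it sits exactly at the step you flag but do not carry out. You reduce the existence of the splitting to the equality $(\Lambda\backslash H)^{\min}(\mu,\nu)=\Lambda^{\min}(\mu,\nu)$ for $\mu,\nu\in\Lambda\backslash H$, and assert that heredity of $H$ supplies it. It does not: heredity controls paths whose \emph{range} lies in $H$ (that is what you use in (d)), but a minimal common extension of two paths with ranges and sources outside $H$ has range outside $H$, and nothing prevents its source from lying in $H$. Concretely, take $k=2$ and let $\Lambda$ be a single commuting square: vertices $v,w_1,w_2,u$ and edges $\mu\in v\Lambda^{e_1}w_1$, $\nu\in v\Lambda^{e_2}w_2$, $\eta\in w_1\Lambda^{e_2}u$, $\zeta\in w_2\Lambda^{e_1}u$ with $\mu\eta=\nu\zeta$. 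Then $H=\{u\}$ is hereditary (no nontrivial path has range $u$), $\mu,\nu\in\Lambda\backslash H$, and $\Lambda^{\min}(\mu,\nu)=\{(\eta,\zeta)\}$ while $(\Lambda\backslash H)^{\min}(\mu,\nu)=\emptyset$. In $\TC^*(\Lambda)$, (T5) gives $t_\mu^*t_\nu=t_\eta t_\zeta^*$, which is nonzero (apply it to $h_\zeta$ in the finite-path representation), whereas (T5) in $\TC^*(\Lambda\backslash H)$ forces $s_\mu^*s_\nu=0$; so no homomorphism with $\pi(s_\lambda)=t_\lambda$ exists for this pair $(\Lambda,H)$. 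Hence the verification you defer cannot be completed from heredity alone (it can when $k=1$, where a minimal common extension of two comparable paths avoiding $H$ is just the longer of the two, and also whenever $\Lambda\backslash H$ is closed under minimal common extensions taken in $\Lambda$). You are in good company: the paper's own one-line proof of (c) asserts without argument that $(q|_{\Lambda^0\backslash H},t|_{\Lambda\backslash H})$ is a Toeplitz-Cuntz-Krieger $\Lambda\backslash H$-family, and that assertion fails in the same example, so (c) as stated needs an additional hypothesis of this kind. Note, however, that the exactness of the sequence in (c) does not require the splitting: your argument in (b) already yields $\ker q_H=I_H$.
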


\begin{proof}
For \eqref{formIH}, we observe that the set $I:=\clsp\{t_\lambda t_\mu^*:s(\lambda)=s(\mu)\in H\}$ contains the vertex projections $\{q_v:v\in H\}$, and is contained in $I_H$ because each spanning element $t_\lambda t_\mu^*=t_\lambda q_{s(\lambda)} t_\mu^*$. So it suffices to show that $I$ is an ideal. Since $t_\nu t_\lambda=\delta_{s(\nu),r(\lambda)}t_{\nu\lambda}$ and $I$ is $*$-closed, it suffices to see that $t_{\nu}^*t_\lambda t_\mu^*\in I$ when $s(\lambda)=s(\mu)\in H$ and $\nu\in \Lambda$. But then (T5) gives
\[
t_{\nu}^*t_\lambda t_\mu^*=\sum_{(\eta,\zeta)\in\Lambda^{\min}(\nu,\lambda)}t_{\eta}t_{\zeta}^*t_\mu^*=\sum_{(\eta,\zeta)\in\Lambda^{\min}(\nu,\lambda)}t_{\eta}t_{\mu\zeta}^*,
\]
which is in $I$ because $\nu\eta=\lambda\zeta$ and $r(\zeta)=s(\mu)$ implies $s(\zeta)\in H$.

There is a much more general result than \eqref{hered} in \cite[Theorem~4.4]{SWW}, but in this generality the argument of \cite[Proposition~2.1]{aHLRS4} gives a direct proof. For the splitting in \eqref{split}, observe that $Q=q|_{\Lambda^0\backslash H}$, $T=t|_{\Lambda\backslash H}$ is a Toeplitz-Cuntz-Krieger $\Lambda \backslash H$ family in $\TC^*(\Lambda)$. A similar argument gives the homomorphism $\pi_{S,P}$ in \eqref{corneriso}; that it is an isomorphism follows from the uniqueness theorem in \cite[Theorem~8.1]{RS}.
\end{proof}

So for a hereditary set $H$,  $\TC^*(\Lambda \backslash H)$ is both a quotient and a subalgebra of $\TC^*(\Lambda)$. The latter is a result about the Toeplitz algebra rather than the graph algebra: a vertex $v$ in $\Lambda^0\backslash H$ can receive edges from $H$, and then the restriction $(p|_{\Lambda^0\backslash H},s|_{\Lambda\backslash H})$ of the canonical Cuntz-Krieger family in $C^*(\Lambda)$ would not satisfy all the Cuntz-Krieger relations at $v$.

\subsection{KMS states} We are interested in operator-algebraic dynamical systems consisting of an action $\alpha$ of the real numbers $\RR$ on a $C^*$-algebra $A$. An element $a$ of $A$ is analytic for $\alpha$ if the function $t\mapsto \alpha_t(a):\RR\to A$ extends to an analytic function on the complex plane. A state $\phi$ of $A$ is then a KMS$_\beta$ state with inverse temperature $\beta\in (0,\infty)$ if $\phi(ab)=\phi(b\alpha_{i\beta}(a))$ for all $a,b$ in a dense $\alpha$-invariant subalgebra $A_0$ of analytic elements (see \cite[5.3.1]{BR}).

In our examples, the algebra $A$ will be the Toeplitz algebra $\TC^*(\Lambda)$ or graph algebra $C^*(\Lambda)$ of a graph $\Lambda$ of rank $k$. The dynamics will be given in terms of the gauge action $\gamma$ by fixing a vector $r\in \R^k$, and setting 
\[
(\alpha^r)_t=\gamma_{e^{itr}}:=\gamma_{(e^{itr_1},\cdots,e^{itr_k})}. 
\]
Then the subalgebra
$A_0:=\lspan\{t_\mu t_\nu^*:\mu,\nu\in \Lambda\}$ consists of analytic elements and is invariant under $\alpha^r$. So it suffices to check the KMS condition on pairs of elements of the form $t_\mu t_\nu^*$.

\section{Block structure of the vertex matrices}

Suppose that $\Lambda$ is a finite $k$-graph with no sinks or sources, in the strong sense that every vertex receives and emits paths of all degrees. Let $\Cc$ be the collection of nontrivial strongly connected components discussed in \S\ref{subseccomp}. We begin by showing that we can, by carefully ordering the vertex set $\Lambda^0$, give all the vertex matrices $A_i$ a block diagonal form with the vertex matrices $A_{C,i}$ $(C\in\Cc)$ on the diagonal.

\begin{prop}\label{utstructure}
Suppose that $\Lambda$ is a finite $k$-graph without sources or sinks. Let $\Cc$ be the set of strongly connected components of $\Lambda^0$, and suppose that each $\Lambda_C$ for $C\in \Cc$ is coordinatewise irreducible. Then we can order the vertex set $\Lambda^0$ in such a way that each vertex matrix $A_j$ is block upper triangular with the diagonal blocks being the vertex matrices $\{A_{C,j}:C\in \Cc\}$ and some strictly upper triangular matrices.
\end{prop}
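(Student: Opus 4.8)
The plan is to reduce this to the classical single-matrix result by combining it with the strong connectivity structure. First I would recall the fact referenced in the paper (\cite[\S1.2]{Sen}, or \cite[\S2.3]{aHLRS4}): for a single non-negative $\Lambda^0\times\Lambda^0$ matrix, one can order $\Lambda^0$ so that the matrix is block upper triangular with the irreducible components sitting as the diagonal blocks; the ordering is obtained from the partial order on the strongly connected components induced by reachability. The key point here is that the relevant ordering does not depend on which vertex matrix we pick, because the relation $v \le w$ (i.e.\ $v\Lambda w \ne \emptyset$) is intrinsic to the $k$-graph $\Lambda$ and does not reference any particular coordinate.

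So the first step is to fix a total order on $\Lambda^0$ refining the reachability preorder: list the vertices component by component, with trivial singleton components and the nontrivial components $C\in\Cc$ arranged so that if there is a path from (a vertex of) one component to another, the source component comes \emph{after} the target component in the list, and within each component order the vertices arbitrarily. Such a refinement exists because $\le$ descends to a partial order on the set of components. The second step is to verify, for each fixed $j$, that $A_j$ is block upper triangular for this ordering: if $v$ and $w$ lie in components $C$ and $D$ with $A_j(v,w) = |v\Lambda^{e_j}w| \ne 0$, then there is an edge from $v$ to $w$, so $w \le v$, so $D$ is weakly below $C$; either $D = C$ (a diagonal block) or $D$ strictly precedes $C$ in the ordering, which puts the entry strictly above the block diagonal. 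Hence all below-diagonal entries vanish and the only possibly-nonzero off-diagonal blocks are strictly upper. The third step is identifying the diagonal blocks: the block of $A_j$ indexed by a component $C$ has $(v,w)$ entry $|v\Lambda^{e_j}w|$ for $v,w\in C$, which by definition is exactly $A_{C,j}(v,w)$ — using here that $C\Lambda C = \Lambda_C$ really is a $k$-graph with vertex matrices $A_{C,j}$, as established in \S\ref{subseccomp}. Finally, the diagonal blocks coming from trivial components $\{v\}$ with $v\Lambda v = \{v\}$ are $1\times 1$ zero matrices (no edges), hence strictly upper triangular blocks in the degenerate sense; these account for the "and some strictly upper triangular matrices" in the statement.

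I do not expect a serious obstacle in this proof — the content is essentially bookkeeping, and the only genuinely new observation relative to the classical single-matrix case is the trivial but crucial one that a single ordering works simultaneously for all $j$ because it is built from the coordinate-free preorder $\le$. If anything needs care, it is making the statement precise about the "strictly upper triangular matrices" on the diagonal: one should be clear that these arise only from trivial components (sinks/sources in the component digraph that are singletons with no loop), and note that the hypothesis of no sinks or sources in $\Lambda$ does not forbid such trivial components — it only guarantees every vertex emits and receives \emph{some} path of each degree, via possibly long paths through other components. So I would phrase the conclusion as: after reordering, each $A_j$ is block upper triangular, the diagonal blocks are precisely $\{A_{C,j} : C\in\Cc\}$ together with the zero $1\times 1$ blocks indexed by trivial components, and all blocks strictly above the diagonal are arbitrary non-negative integer matrices. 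The coordinatewise irreducibility hypothesis is what guarantees each $A_{C,j}$ is genuinely irreducible, matching the single-matrix statement, though strictly speaking it is not needed for the block-triangularisation itself — it is recorded so that the decomposition can be fed into \cite[Lemma~2.1]{aHLRS2} afterwards.
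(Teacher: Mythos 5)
Your proof is correct, but it takes a genuinely different and more economical route than the paper's. Your key point is that the reachability preorder $\le$ is defined from all of $\Lambda$ at once, hence induces a partial order on the set of all strongly connected components (trivial and nontrivial) that does not depend on the colour $j$; any linear extension of that partial order, with targets of edges listed before sources, then simultaneously block-upper-triangularises every $A_j$, the nontrivial components contributing the diagonal blocks $A_{C,j}$ and the trivial components contributing $1\times 1$ zero blocks. The paper instead constructs one particular such ordering by an explicit iteration: it first proves there is a forwards hereditary component, lists those components, orders the remaining ``intermediate'' vertices $V_1$ by the path-length function $i_v$ to get a strictly upper triangular block $B_{V_1,j}$, and then recurses on the hereditary remainder $R_1$; coordinatewise irreducibility of the $\Lambda_C$ enters only to guarantee that $\Lambda_{R_1}$ again has no sinks or sources so the recursion can continue. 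Your observation that this hypothesis is not needed for the triangularisation itself is therefore accurate --- it is really there so that the diagonal blocks are irreducible, which is what Corollary~\ref{calcrho} and the subsequent Perron--Frobenius arguments exploit. Two directional slips in your write-up deserve flagging, though neither affects the substance: with the paper's convention $v\le w\Longleftrightarrow v\Lambda w\ne\emptyset$, a nonzero entry $A_j(v,w)=|v\Lambda^{e_j}w|$ gives $v\le w$ rather than $w\le v$, and under your stated ordering the source component $D$ of $w$ comes strictly \emph{after} the target component $C$ of $v$, not before; the conclusion you draw (the entry sits strictly above the block diagonal) is the correct one for the ordering you actually defined.
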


\begin{proof}
We first claim that there has to be at least one component $C$ that is forwards hereditary. To see this, suppose not, and fix $C\in \Cc$. There is a vertex $v$ which is not in $C$ such that $v\Lambda C\not=\emptyset$. Since the graph has no sinks, there is an infinite path $x\in \Lambda^{\N e_j}v$ (we deliberately picked an infinite path of a single colour, but it doesn't matter which colour). This path has infinitely many vertices, and hence must contain a return path. The vertices on this return path are all in the same strongly connected component $D_1$, say. Note that $D_1$ cannot be $C$, because then $v$ would have to be in $C$. Since $D_1$ is not forwards hereditary, there exists a vertex $v_1 \notin D_1$ such that $v_1\Lambda D_1\not=\emptyset$.  Continuing in this way yields an infinite sequence $D_n$ of components, none of which is strongly equivalent to any other, which is impossible because $\Lambda$ is finite.

Since there are finitely many components, there are finitely many that are forwards hereditary, say $C_{1,1}, C_{1,2},\dots,C_{1,n_1}$. We list the vertices in $C_{1,1}$ first, then those in $C_{1,2}$, and so on to $C_{1,n_1}$. 

Next we write $C_1:=\bigcup_{i=1}^{n_1}C_{1,i}$, and consider the set
\[
V_1:=\big\{v\in \Lambda^0\backslash C_1:C\in\Cc\text{ and }C\Lambda v\not=\emptyset\Longrightarrow C=C_{1,i}\text{ for some $i\leq n_1$}\big\}.
\]
Following \cite{HRSW}, we say that a path $x=x_1x_2\cdots x_{|x|}$ in the skeleton of $\Lambda$ \emph{traverses} $\lambda\in \Lambda^n$ if each $x_i\in\Lambda^{e_j}$ for some $j$, $|x|=|n|:=n_1+\dots+n_k$ and $\lambda$ is the composition $x_1x_2\cdots x_{|x|}$ in $\Lambda$; we also say that $x$ is a \emph{transversal} of $\lambda$. For $v\in V_1$ we define
\[
i_v=\max\big\{|d(\lambda)|:\lambda\in C_1\Lambda v\text{ has a transversal $x$ such that $s(x_i)\notin C_1$ for all $i$}\big\}.
\]
For $j\leq \max\{i_v:v\in V_1\}$, we set
\[
V_{1,j}:=\{v\in V_1:i_v=j\}.
\]
We order the vertices in $V_1$ by ordering $V_{1,1}$ first, then $V_{1,2}$, and so on.

Now we fix $j$ and look at the matrix $A_j$. Our listing gives us a decomposition $\Lambda^0=C_1\cup V_1\cup R_1$ where $R_1:=\Lambda^0\backslash(C_1\cup V_1)$, and we claim that this gives a corresponding block decomposition
\begin{equation}\label{Ajblock}
A_j=\begin{pmatrix}
A_{C_1,j}&\star&\star\\
0&B_{V_1,j}&\star\\
0&0&A_{R_1,j}
\end{pmatrix}.
\end{equation}
First we observe that the set $C_1$ is forwards hereditary. Thus the bottom two blocks on the left-hand side are $0$.  Next suppose that $w\in V_1$ and $v\in R_1$. Then $v$ connects to at least one component $C$, and since $v$ is not in $V_1$, there exists $C\in\Cc\backslash \{C_{1,i}\}$ such that $C\Lambda v\not=\emptyset$. Then $A_j(v,w)>0$ implies $C\Lambda v\Lambda w\not=\emptyset$, which contradicts $w\in V_1$. So the middle matrix on the bottom row is $0$ too. Thus the matrix $A_j$ has block form \eqref{Ajblock}, as claimed.

Since there are no paths between the disjoint components $C_{1,i}$ and $C_{1,i'}$, the matrix $A_{C_1,j}$ is block diagonal with blocks $A_{C_{1,i},j}$. We now prove that the matrix $B_{V_1,j}$ is strictly upper triangular. Suppose that $v\in V_{1,i}$, $w\in V_{1,i'}$ and $B_{V_1,j}(v,w)>0$. Then there is a path $\mu\in C_1\Lambda v$ with $|d(\mu)|=i$ and a transversal $x$ for $\mu$ such that $s(x_l)\notin C_1$ for all $l$. Since $B_{V_1,j}(v,w)>0$, there is an edge $e\in v\Lambda^{e_j}w$. But then the composition $\mu e$ is a path in $C_1\Lambda w$ with a transversal $y:=xe$ such that $s(y_l)\notin C_1$ for all $l$. Since $|d(\mu e)|=i+1$, we deduce that $i'\geq i+1$. Thus the only nonzero entries in $B_{V_1,j}$ lie above the diagonal. So the matrices
$A_{C_1,j}$ and $B_{V_1,j}$ have the form we require of our diagonal blocks. (Indeed, $A_{C_1,j}$ is block diagonal, which is more than we require.)

We claim that the set $R_1=\Lambda^0\backslash(C_1\cup V_1)$ is hereditary. Suppose that $w\in R_1$ and $v\geq w$. Since $C_1$ is forwards hereditary, $v\in C_1\Longrightarrow w\in C_1$, which is impossible since $w\in R_1$. So $v\notin C_1$. To see that $v$ cannot be in $V_1$, suppose it was, and $C\in\Cc$ has $w\geq C$. Then $v\geq w\geq C$, and $v\in V_1$ implies $C=C_{1,i}$. Thus $w\in V_1$ too. So $v$ cannot be in $V_1$. Thus $v\in  R_1=\Lambda^0\backslash(C_1\cup V_1)$. 

Since $R_1=\Lambda^0\backslash(C_1\cup V_1)$ is hereditary, $\Lambda_{R_1}=R_1\Lambda R_1$ is a $k$-graph. Since $\Lambda$ has no sources and any paths in $\Lambda_{R_1}$ have sources in $R_1$, $\Lambda_{R_1}$ has no sources. To see that it has no sinks, we consider $v\in R_1$. Then $v$ connects to a strongly connected component $C$; it cannot connect only to components of the form $C_{1,i}$, because $v$ is not in $V_1$. So we suppose $C\subset R_1$. Take $\lambda\in C\Lambda v$. Since $\Lambda_C$ is coordinatewise irreducible\footnote{This is where we need coordinatewise irreducible. Without this hypothesis, it is possible that $\Lambda_{R_1}$ has sinks and/or sources. For example, consider the dumbbell graph with $\Lambda^0=\{u,v\}$, one blue loop at each of $u$ and $v$, and one red edge from $v$ to $u$.} $r(\lambda)$ emits edges of all colours with range in $C$. Thus by the factorisation property, so does $v$, and since $\Lambda_C$ is a $k$-graph, these edges must lie in $\Lambda_C$.

Now the graph $\Lambda_{R_1}$ satisfies the hypotheses of the Proposition. Thus we can apply the preceding argument to find forwards hereditary components $\{C_{2,i}:1\leq i\leq n_2\}$ in $\Lambda^0_{R_1}=R_1$, $C_2:=\bigcup_{i=1}^{n_2}C_{2,i}$ and a set $V_2$. This gives us a listing of $R_1$ such that the vertex matrices $A_{R_1,j}$ of $\Lambda_{R_1}$ have the form 
\[
A_{R_1,j}=\begin{pmatrix}
A_{C_2,j}&\star&\star\\
0&B_{V_2,j}&\star\\
0&0&A_{R_2,j}
\end{pmatrix}. 
\]
After finitely many steps, we arrive at a listing of the entire vertex set such that the matrices $A_j$ are simultaneously  block upper triangular with blocks $A_{C_{p,i},j}$ and $B_{V_p,j}$ of the required form. 
\end{proof}

\begin{cor}\label{calcrho}
With $\Lambda$ as in Proposition~\ref{utstructure}, we have
\[
\rho(A_i)=\max\{\rho(A_{C,i}):C\in \Cc\}\quad\text{for $1\leq i\leq k$.}
\]
\end{cor}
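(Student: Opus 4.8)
The plan is to read off the spectral radius of $A_i$ directly from the block upper triangular form supplied by Proposition~\ref{utstructure}. First I would recall that, after the ordering of $\Lambda^0$ constructed there, the matrix $A_i$ is block upper triangular whose diagonal blocks are, in some order, the vertex matrices $\{A_{C,i}:C\in\Cc\}$ of the components together with the strictly upper triangular blocks $B_{V_p,i}$. A strictly upper triangular matrix is nilpotent, so its only eigenvalue is $0$. On the other hand, for any block upper triangular matrix the characteristic polynomial factors as the product of the characteristic polynomials of the diagonal blocks, so the set of eigenvalues of $A_i$ is the union of the sets of eigenvalues of its diagonal blocks. Taking moduli and maxima, this gives
\[
\rho(A_i)=\max\Big(\{\rho(A_{C,i}):C\in\Cc\}\cup\{0\}\Big).
\]

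Next I would argue that the $0$ on the right-hand side is redundant, i.e.\ that the maximum is attained on one of the $A_{C,i}$. Since $\Lambda$ is finite and has no sources, any sufficiently long path of colour $i$ must repeat a vertex and hence contains a return path, whose vertices all lie in a common nontrivial strongly connected component; thus $\Cc\neq\emptyset$. Moreover, each $A_{C,i}$ is a nonzero nonnegative integer matrix which is irreducible, so every one of its rows has a positive entry and hence each row sum is at least $1$; since $\rho(B)\ge\min_v\sum_w B(v,w)$ for a nonnegative matrix $B$, we obtain $\rho(A_{C,i})\ge 1>0$ for every $C\in\Cc$. Combining this with the displayed formula yields $\rho(A_i)=\max\{\rho(A_{C,i}):C\in\Cc\}$, as required.

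I do not expect a serious obstacle: the corollary is in essence the observation that the spectral radius is insensitive to the nilpotent (strictly upper triangular) blocks in the decomposition. The only points that need a word of care are that $\Cc$ is nonempty and that the irreducible diagonal blocks really do dominate the zero eigenvalues coming from the $B_{V_p,i}$; both follow from the standing hypothesis that $\Lambda$ has no sources, together with the assumption that each $\Lambda_C$ is coordinatewise irreducible.
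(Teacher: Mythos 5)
Your proof is correct and is exactly the argument the paper leaves implicit: Corollary~\ref{calcrho} is stated without proof as an immediate consequence of the block upper triangular form in Proposition~\ref{utstructure}, with the spectral radius read off from the diagonal blocks, the strictly upper triangular blocks $B_{V_p,i}$ contributing only the eigenvalue $0$. Your extra care in checking that $\Cc\neq\emptyset$ and that each irreducible $A_{C,i}$ has $\rho(A_{C,i})\geq 1$ (so the maximum is genuinely attained on a component) is a worthwhile addition that the paper does not spell out.
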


\section{The preferred dynamics}\label{subsecpreferred} 

Suppose that $\Lambda$ is a finite $k$-graph without sources or sinks, and $\TC^*(\Lambda)$ is its Toeplitz algebra. For $r\in (0,\infty)^k$, we define a dynamics $\alpha^r:\R\to\Aut \TC^*(\Lambda)$ in terms of the gauge action $\gamma$ of $\TT^k$ by
\[
(\alpha^r)_t=\gamma_{e^{itr}}:=\gamma_{(e^{itr_1},\cdots,e^{itr_k})}. 
\]
When $\Lambda$ is coordinatewise irreducible and there is a KMS$_\beta$ state of $(\TC^*(\Lambda),\alpha^r)$, the inverse temperature $\beta$ satisfies $\beta r_i\geq \ln\rho(A_i)$ for $1\leq i\leq k$ \cite[Corollary~4.3]{aHLRS2}. The number \[\max\{r_i^{-1}\ln\rho(A_i):1\leq i\leq k\}\] is then the \emph{critical inverse temperature}.
For reducible graphs the situation is more complicated, as we shall see, but the next problem is still to find out what happens at the critical inverse temperature $\max\{r_i^{-1}\ln\rho(A_i)\}$. 

We can  scale the dynamics $\alpha^r:t\mapsto \alpha^r_t$ by a factor $c\in (0, \infty)$, so the scaled version sends $t\mapsto \alpha^r_{ct}=\alpha^{c^{-1}r}_t$.  The KMS$_\beta$ states for the scaled dynamics $\alpha^{c^{-1}r}$ are the KMS$_{c^{-1}\beta}$ states for $\alpha^r$ \cite[Lemma~2.1]{aHKR}. So by choosing an appropriate scale factor, we get a dynamics for which the critical inverse temperature has been normalised to $1$. We assume throughout this paper that we have made this normalisation. To sum up:

\begin{standing}
In this paper we study dynamics $\alpha^r:\R\to \Aut \TC^*(\Lambda)$ such that $r\in (0,\infty)^k$ satisfies
\begin{equation}\label{normalise}
\max\big\{r_i^{-1}\ln\rho(A_i):1\leq i\leq k\big\}=1.
\end{equation}
When $r_i=\ln\rho(A_i)$ for all $1\leq i\leq k$, we call $\alpha^r$ the \emph{preferred dynamics}.
\end{standing}

 If $\alpha^r$ is the preferred dynamics, then $r\in (0,\infty)^k$ implies that $\rho(A_i)>1$ for $1\leq i\leq k$; this rules out the case in which some coordinate graph is a disjoint union of cycles (see \cite[Lemma~A.1]{aHLRS1}).
The main results of \cite{aHLRS2, aHLRS3} concern the preferred dynamics, and in \cite{aHKR} we studied more general dynamics satisfying \eqref{normalise}.  
\begin{rmk} 
In this paper, we are primarily interested in  systems $(\TC^*(\Lambda),\alpha^r)$ involving the preferred dynamics. However, the reducible graphs we consider here have nontrivial hereditary sets $H\subset \Lambda^0$, and hence, by Proposition~\ref{modhered}, nontrivial quotients $\TC^*(\Lambda\backslash H)$. The quotient maps all respect the gauge action, and hence $\alpha^r$ induces actions $\bar\alpha^r$ on each quotient. Composing  KMS states of $(\TC^*(\Lambda\backslash H),\bar\alpha^r)$ with the quotient map gives KMS states of the original system, so we want to use our results to find KMS states of the quotient system. But we have to be careful: these quotient dynamics are typically not the preferred dynamics for the graph $\Lambda\backslash H$. However, in  most cases, they will still satisfy the standing assumption \eqref{normalise}. The exception is when we have strict inequality $r_i<\ln\rho(A_{\Lambda\backslash H,i})$ for all $i$, and in that case Theorem~6.1 of \cite{aHLRS2} describes the KMS$_\beta$ states of $(\TC^*(\Lambda\backslash H),\bar\alpha^r)$ for $\beta$ satisfying 
\[
\max\big\{r_i^{-1}\ln\rho(A_{\Lambda\backslash H,i}):1\leq i\leq k\big\}<\beta\leq 1.
\]
\end{rmk}

As we observed above, when $\Lambda$ is coordinatewise irreducible, Corollary~4.3 of \cite{aHLRS2} says that there are no KMS$_\beta$ states unless $\beta\geq 1$.  A more general result is stated in \cite[Proposition~4.1]{aHKR}, but unfortunately that result is not true as stated: if $\Lambda$ is reducible, it is quite possible that there are KMS$_\beta$ states with $\beta<1$ (see \S\ref{KMS2cpts} below). We discuss the flawed proof in \cite{aHKR} in the following remark. 

\begin{rmk}\label{flawinaHKR}
The problem in the proof of \cite[Proposition~4.1]{aHKR} is that (in the notation of that proof) the functional $\phi\circ \pi$ on $\TC^*(E_C)$ need not be a state, and indeed could be $0$. We do not believe that the error affects the rest of that paper, since the result was only used to motivate our preference for the preferred dynamics. 
\end{rmk}

\section{Critical components}\label{secmin}

In this section, we suppose that $\Lambda$ is a finite $k$-graph without sources or sinks, and that $r_i:=\ln\rho(A_i)$ for all $i$, so that $\alpha^r$ is the preferred dynamics on $\TC^*(\Lambda)$. For $j\in \{1,\dots,k\}$ we say that a strongly connected component $C$ of $\Lambda^0$ is \emph{$j$-critical} if $\rho(A_{C,j})=\rho(A_j)$.

\begin{thm}\label{crit}
Suppose that $\Lambda$ is a finite $k$-graph without sources or sinks, and that $\alpha^r$ is the preferred dynamics on $\TC^*(\Lambda)$. Suppose that $C$ is a strongly connected component of $\Lambda^0$, that $\Lambda_C$ is coordinatewise irreducible, and that there exists $j\in\{1,\dots, k\}$ such that $C$ is a  $j$-critical component. Suppose that \[H_j=\{w\in\Lambda^0: C\Lambda^{\N e_j} w\neq\emptyset\}\] is hereditary, and let $H$ be the complement of $C$ in $H_j$. Then every KMS$_1$ state of $(\TC^*(\Lambda),\alpha^r)$ factors through a state of $\TC^*(\Lambda\backslash H)$.
\end{thm}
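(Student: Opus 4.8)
The plan is to show that every KMS$_1$ state $\phi$ of $(\TC^*(\Lambda),\alpha^r)$ kills the ideal $I_H$ generated by $\{q_v:v\in H\}$, and then invoke Proposition~\ref{modhered}\eqref{hered} to conclude that $\phi$ factors through $\TC^*(\Lambda\backslash H)$. By Proposition~\ref{modhered}\eqref{formIH} we have $I_H=\clsp\{t_\lambda t_\mu^*:s(\lambda)=s(\mu)\in H\}$, so it suffices to prove $\phi(q_v)=0$ for every $v\in H$. (For then the Cauchy--Schwarz inequality $|\phi(t_\lambda t_\mu^*)|^2\le \phi(t_\lambda t_\lambda^*)\phi(t_\mu t_\mu^*)$ together with $\phi(t_\lambda t_\lambda^*)\le\phi(q_{r(\lambda)})$ is not quite enough by itself; rather one uses $t_\lambda t_\mu^*=t_\lambda q_{s(\lambda)}t_\mu^*$ and Cauchy--Schwarz to bound $|\phi(t_\lambda t_\mu^*)|^2\le\phi(t_\lambda q_{s(\lambda)}t_\lambda^*)\phi(t_\mu q_{s(\mu)}t_\mu^*)$, and the KMS condition turns $\phi(t_\lambda q_{s(\lambda)}t_\lambda^*)$ into $e^{-r\cdot d(\lambda)}\phi(q_{s(\lambda)})=0$.) So the whole theorem reduces to the single assertion: $\phi(q_v)=0$ for all $v\in H$.

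The mechanism for forcing $\phi(q_v)=0$ is the subinvariance/eigenvector inequality for KMS$_1$ states, as developed in \cite{aHLRS2}: if $\phi$ is a KMS$_1$ state with $m^\phi_v:=\phi(q_v)$, then applying the KMS condition and relation (T4) in the single direction $e_j$ gives, for each $v$,
\begin{equation*}
m^\phi_v\ \ge\ e^{-r_j}\sum_{w\in\Lambda^0}A_j(v,w)\,m^\phi_w\ =\ \rho(A_j)^{-1}(A_j m^\phi)_v,
\end{equation*}
i.e. $\rho(A_j)m^\phi\ge A_j m^\phi$ entrywise, where we used $r_j=\ln\rho(A_j)$. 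Now restrict attention to the hereditary set $H_j=C\cup H$: since $H_j$ is hereditary, $A_j$ restricted to $H_j\times H_j$ is exactly the vertex matrix $A_{H_j,j}$ of $\Lambda\backslash(\Lambda^0\setminus H_j)$ (up to the block structure of Proposition~\ref{utstructure}), and the vector $(m^\phi_v)_{v\in H_j}$ satisfies $\rho(A_j)(m^\phi|_{H_j})\ge A_{H_j,j}(m^\phi|_{H_j})$. Because $C$ is $j$-critical, $\rho(A_{C,j})=\rho(A_j)$, and $\Lambda_C$ coordinatewise irreducible means $A_{C,j}$ is an irreducible block sitting on the diagonal of $A_{H_j,j}$. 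The key structural input is that $H_j$ was defined precisely as the set of vertices reachable from $C$ by $e_j$-coloured paths, so within $H_j$ the component $C$ is \emph{forwards hereditary} for the $j$-skeleton: nothing in $H_j$ feeds into $C$ along $e_j$-edges except $C$ itself. Hence, ordering $H_j$ with $C$ last, $A_{H_j,j}$ is block upper triangular with $A_{C,j}$ as its bottom-right (diagonal) block.

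With this block-triangular picture the argument is a standard Perron--Frobenius squeeze. Write $m=m^\phi|_{H_j}\ge0$ and decompose according to $H_j=H\sqcup C$ (with $C$ last). The subinvariance inequality restricted to the $C$-rows reads $\rho(A_j)(m|_C)\ge A_{C,j}(m|_C)$ — here the off-diagonal blocks only help the inequality since all entries and all $m_w$ are nonnegative and $C$ is forwards hereditary within $H_j$, so no $H$-entries contribute to the $C$-rows. Since $A_{C,j}$ is irreducible with spectral radius $\rho(A_j)$, the inequality $\rho(A_{C,j})(m|_C)\ge A_{C,j}(m|_C)$ forces $m|_C$ to be either $0$ or a strictly positive Perron eigenvector with $A_{C,j}(m|_C)=\rho(A_{C,j})(m|_C)$; since $\Lambda$ has no sources, every $q_v\ne0$ forces $m_v=\phi(q_v)>0$ for $v\in C$ whenever... actually the right move is the converse: run the inequality on the $C$-rows and conclude equality propagates, then use the $H$-rows. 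Concretely: on each $H$-row $v$, $\rho(A_j)m_v\ge\sum_{w\in H_j}A_j(v,w)m_w$. Sum the whole inequality against the \emph{left} Perron eigenvector $\kappa$ of $A_{C,j}$, extended by $0$ off $C$: since $\kappa^T A_{H_j,j}$ has support in $C\cup\{\text{things below }C\}=C$ with $\kappa^T A_{H_j,j}=\rho(A_j)\kappa^T$ on those coordinates (using forward-heredity of $C$ inside $H_j$, so $\kappa^T$ is a genuine left eigenvector of the relevant corner), we get $\rho(A_j)\,\kappa^T m\ge\kappa^T A_{H_j,j}m=\rho(A_j)\,\kappa^T m$, forcing equality in every row where $\kappa$-weighted terms appear, and in particular $A_{C,j}$-rows give $A_{C,j}(m|_C)=\rho(A_j)(m|_C)$ and the off-diagonal $C$-to-$H$ blocks contribute $0$, i.e. $A_j(v,w)m_w=0$ for $v\in C$, $w\in H$. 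But the definition $H_j=\{w:C\Lambda^{\N e_j}w\ne\emptyset\}$ means every $w\in H$ is reached from $C$ by a finite $e_j$-path, so along that path some edge $A_j(v,w')>0$ with $v$ "closer to $C$"; an induction on the length of the shortest $e_j$-path from $C$ to $w$, using at each stage that the relevant row-equality kills $A_j(v,w)m_w$, yields $m_w=0$ for all $w\in H$. That is $\phi(q_w)=0$ for all $w\in H$, completing the proof.

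The main obstacle I anticipate is the bookkeeping in that last induction: one needs to set it up so that at stage $n$ (vertices at $e_j$-distance $n$ from $C$) the equality already established for stage-$(n-1)$ rows — combined with nonnegativity of all entries — forces $m_w=0$ at stage $n$, and one has to be careful that the "distance" is measured only along $e_j$-coloured edges and that $H_j$ being hereditary (in the full graph) is what guarantees $A_{H_j,j}$ is a genuine corner so the Perron--Frobenius comparison is legitimate. A secondary point to get right is the reduction at the start: one must check that $\{m^\phi_v\}$ satisfying the subinvariance inequality on all of $\Lambda^0$ restricts correctly to $H_j$, which is exactly where heredity of $H_j$ (not merely of $H$) is used, since heredity ensures that the $A_j$-rows indexed by $H_j$ involve only columns in $H_j$.
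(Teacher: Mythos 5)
Your strategy does arrive at the correct conclusion, but by a genuinely different route from the paper's, and the one computation at its heart is wrong as written and needs repair. On the comparison: the paper splits into cases according to whether $m^\phi|_C=0$. If it vanishes, a direct squeeze $0\le A_j^n(v,w)m^\phi_w\le(A_j^nm^\phi)_v\le\rho(A_j)^nm^\phi_v=0$ finishes; if not, the subinvariance theorem for the irreducible matrix $A_{C,j}$ shows that $m^\phi|_C$ is a \emph{right} Perron--Frobenius eigenvector, and comparing $(A_j^nm^\phi)_v\ge\rho(A_{C,j})^nm^\phi_v+A_j^n(v,w)m^\phi_w$ with $\rho(A_j)^nm^\phi_v$ forces $A_j^n(v,w)m^\phi_w=0$. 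Your pairing against the \emph{left} Perron eigenvector $\kappa$ of $A_{C,j}$, followed by induction on $e_j$-distance from $C$, avoids the case split (because $\kappa>0$ whatever $m^\phi$ does) and uses only the one-step matrix $A_j$ rather than its powers; the induction step is fine, since once $m^\phi_{w'}=0$ the row inequality $0=\rho(A_j)m^\phi_{w'}\ge\sum_uA_j(w',u)m^\phi_u\ge0$ kills $A_j(w',u)m^\phi_u$ for every $u$. (Two side remarks: with $C$ listed last your matrix is block \emph{lower} triangular, since the block that vanishes is the one with rows in $H$ and columns in $C$; and your opening reduction is exactly \cite[Lemma~2.2]{aHLRS1}, which the paper invokes.)

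The error is in the pairing step. You assert that $\kappa^TA_{H_j,j}$ is supported on $C$ and equals $\rho(A_j)\kappa^T$ there, and then display the chain $\rho(A_j)\kappa^Tm\ge\kappa^TA_{H_j,j}m=\rho(A_j)\kappa^Tm$, which is vacuous and does not yield $A_j(v,w)m_w=0$. In fact for $w\in H$ one has $(\kappa^TA_{H_j,j})_w=\sum_{v\in C}\kappa_vA_j(v,w)$, which is strictly positive whenever $v\Lambda^{e_j}w\ne\emptyset$ for some $v\in C$: the cross-block does not disappear, and it is precisely the quantity you need. The correct computation is
\[
\rho(A_j)\,\kappa^Tm^\phi\;\ge\;\kappa^TA_{H_j,j}\big(m^\phi|_{H_j}\big)\;=\;\rho(A_{C,j})\,\kappa^Tm^\phi+\sum_{v\in C}\sum_{w\in H}\kappa_vA_j(v,w)m^\phi_w,
\]
where the inequality pairs the restricted subinvariance relation $A_{H_j,j}(m^\phi|_{H_j})\le\rho(A_j)(m^\phi|_{H_j})$ with the nonnegative vector $\kappa$, and the equality uses $\kappa^TA_{C,j}=\rho(A_{C,j})\kappa^T$ on the $C$-columns together with the fact that $\kappa$ vanishes off $C$. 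Since $C$ is $j$-critical, $\rho(A_{C,j})=\rho(A_j)$, so the double sum is at most $0$; as every term is nonnegative it vanishes, and $\kappa_v>0$ then gives $A_j(v,w)m^\phi_w=0$ for all $v\in C$ and $w\in H$, which is the base case of your induction. So the conclusion you want is true and provable by your method, but not by the chain you wrote; note that this is the one place where the $j$-criticality of $C$ enters your argument, exactly as it should.
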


\begin{proof}
Suppose that $\psi$ is a KMS$_1$ state of $\TC^*(\Lambda)$, and consider the vector $m^\psi=(\,\psi(q_v)\,)$. We take $w\in H$, and aim to prove that $\psi(q_w)=m^\psi_w=0$. Since $w\in H_j$, there exist $v\in C$ and $n\in \N$ such that $v\Lambda^{ne_j}w\not=\emptyset$. Thus $A_j^n(v,w)>0$. 

Applying \cite[Proposition~4.1(a)]{aHLRS2} to the singleton set $K=\{j\}$ and $\beta=1$ shows that $m^\psi$ satisfies
\begin{equation}\label{subinvj}
A_jm^\psi\leq e^{r_j\beta}m^\psi=\rho(A_j)m^\psi.
\end{equation}
We write $D:=\Lambda^0\backslash (C\cup H)$, and write $A_j$ in block form with respect to the decomposition $\Lambda^0=D\cup C\cup H$:
\begin{equation}\label{3block}
A_j=
\begin{pmatrix}A_{D,j}&\star&\star\\0&A_{C,j}&A_{C,H,j}\\0&0&A_{H,j}
\end{pmatrix}.  
\end{equation}

We consider separately the two cases in which $m^\psi|_C=0$ and $m^\psi|_C\not=0$. Suppose first that $m^\psi|_C=0$. The subinvariance relation~\eqref{subinvj} gives
\[
0\leq A_j^n(v,w)m^\psi_w\leq \big(A_j^nm^\psi\big)_v\leq \rho(A_j)^nm^\psi_v=0,
\]
and $A_j^n(v,v)>0$ forces $m^\psi_w=\psi(q_w)=0$.

Now we suppose that $m^\psi|_C$ is nonzero. Then looking at the centre block in \eqref{3block}, the subinvariance inequality $A_jm^\psi\leq  \rho(A_j)m^\psi$ gives
\[
A_{C,j}(m^\psi|_C)\leq (A_jm^\psi)|_C\leq \rho(A_j)m^\psi|_C.
\]
Since $C$ is $j$-critical, we have $\rho(A_j)=\rho(A_{C,j})$ and
\[
A_{C,j}(m^\psi|_C)\leq \rho(A_{C,j})m^\psi|_C,
\]
from which the subinvariance theorem \cite[Theorem~1.6]{Sen} for the irreducible matrix $A_{C,j}$ and the nonzero vector $m^\psi|_C$ implies that 
\begin{equation}\label{PFforj}
A_{C,j}m^\psi|_C=\rho(A_{C,j})m^\psi|_C.
\end{equation}
In other words, $m^\psi|_C$ is a Perron-Frobenius eigenvector for $A_{C,j}$. 
Now
\begin{align*}
\rho(A_{C,j})^nm^\psi_v=\rho(A_j)^nm^\psi_v&\geq(A_j^nm^\psi)_v=\sum_{u\in \Lambda^0}A_j^n(v,u)m^\psi_u\\
&\geq \sum_{u\in C}A_j^n(v,u)m^\psi_u+A_j^n(v,w)m^\psi_w.
\end{align*}
The block decomposition \eqref{3block} implies that $A_j^n(v,u)=A_{C,j}^n(v,u)$ for every $u\in C$, and hence
\begin{align*}
\rho(A_{C,j})^nm^\psi_v
&\geq \sum_{u\in C}A_{C,j}^n(v,u)m^\psi_u+A_j^n(v,w)m^\psi_w\\
&=\rho(A_{C,j})^n\big(m^\psi|_{C}\big)_v+A_j^n(v,w)m^\psi_w\quad\text{by \eqref{PFforj}}\\
&=\rho(A_{C,j})^nm^\psi_v+A_j^n(v,w)m^\psi_w.
\end{align*}
Since $A_j^n(v,w)>0$, we deduce that $\psi(q_w)=m^\psi_w=0$, as desired. Thus $\psi(q_w)=0$ for all $w\in H$.

Since the set $P:=\{q_w:w\in H\}$ consists of projections which are fixed by the dynamics $\alpha$, and since the spanning elements $t_\lambda t_\mu^*$ are analytic with $\alpha_z(t_\lambda t_\mu^*)=e^{iz(d(\lambda)-d(\mu))}t_\lambda t_\mu^*$, it follows from \cite[Lemma~2.2]{aHLRS1} that $\psi$ vanishes on the ideal $I_{H}$ generated by $P$, and hence $\psi$ factors through a KMS$_1$ state of $\Tt C^*(\Lambda)/I_{H}$.  Since $H$ is hereditary, $\Tt C^*(\Lambda)/I_{H}=\Tt C^*(\Lambda\backslash H)$, and the result follows.
\end{proof}

The set $H$ in Theorem~\ref{crit} could be empty, in which case the theorem gives no information.  For example, in the  $2$-graph \label{graph-page-ref}
\[\begin{tikzpicture}[scale=1.5]
 \node[inner sep=0.5pt, circle] (u) at (0,0) {$u$};
    \node[inner sep=0.5pt, circle] (v) at (2,0) {$v$};
\draw[-latex, blue] (u) edge [out=190, in=250, loop, min distance=20, looseness=2.5] (u);
\draw[-latex, red, dashed] (u) edge [out=110, in=170, loop, min distance=20, looseness=2.5] (u);
\draw[-latex, blue] (v) edge [out=280, in=340, loop, min distance=20, looseness=2.5] (v);
\draw[-latex, red, dashed] (v) edge [out=30, in=90, loop, min distance=20, looseness=2.5] (v);
\draw[-latex, blue] (v) edge [out=180, in=0]   (u);
\node at (-0.55, -0.5) {$1$};
\node at (-0.6,0.55) {$2$};
\node at (2.65, -0.5) {\color{black} $1$};
\node at (2.55, 0.5 ) {\color{black} $2$};
\node at (1,-0.3) {$1$};
\end{tikzpicture}
\]
the component $C=\{u\}$ is $2$-critical, but there are no red paths from $D=\{v\}$ to $C$, and hence  $H_2=\emptyset$.

On the other hand, the set $H$ in Theorem~\ref{crit} could also be large, and then the theorem does provide useful input.  However, the hereditary set $C\cup H$ need not be large in $\Lambda^0$, and then $\TC^*(\Lambda)$ could have KMS$_\beta$ states for $\beta<1$ lifted from KMS states of the quotient $\TC^*(\Lambda\backslash (C\cup H))$. When $C$ is also minimal among all components, this cannot happen:

\begin{prop}\label{boundbeta} 
Suppose that $\Lambda$ is a finite $k$-graph without sources or sinks, and that $\alpha^r$ is the preferred dynamics on $\TC^*(\Lambda)$. Suppose that $C$ is a strongly connected component of $\Lambda^0$ such that $\Lambda_C$ is coordinatewise irreducible, and that there exists $j$ such that $C$ is $j$-critical. If $C$ has hereditary closure $\Lambda^0$, then every KMS$_\beta$ state of $(\TC^*(\Lambda),\alpha^r)$ has inverse temperature $\beta\geq 1$.
\end{prop}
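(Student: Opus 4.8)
The plan is to work with the vector $m^\psi:=(\psi(q_v))_{v\in\Lambda^0}$ associated to a putative $\KMS_\beta$ state $\psi$ of $(\TC^*(\Lambda),\alpha^r)$, show that it does not vanish on $C$, and then feed the subinvariance inequality restricted to the block $C$ into the Perron--Frobenius subinvariance theorem for the irreducible matrix $A_{C,j}$. First I would record the easy facts: since $\Lambda^0$ is finite, $\sum_{v\in\Lambda^0}q_v$ acts as the identity on every generator $t_\lambda$ and hence is the unit of $\TC^*(\Lambda)$, so $m^\psi$ is a nonnegative vector with $\sum_v m^\psi_v=\psi(1)=1$; in particular $m^\psi\neq 0$. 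Applying \cite[Proposition~4.1(a)]{aHLRS2} to each singleton $K=\{i\}$ gives the subinvariance relations
\[
A_i m^\psi\le e^{r_i\beta}m^\psi=\rho(A_i)^\beta m^\psi\qquad(1\le i\le k).
\]

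The crucial step is to prove $m^\psi|_C\neq 0$. Set $Z:=\{v\in\Lambda^0:m^\psi_v=0\}$. For $v\in Z$ and any colour $i$, the $v$-coordinate of the $i$-th subinvariance relation forces $(A_im^\psi)_v=0$, so every nonnegative term $A_i(v,u)m^\psi_u$ vanishes; hence $u\in Z$ whenever $v\Lambda^{e_i}u\neq\emptyset$. Factorising a path $\lambda\in v\Lambda w$ into coordinate edges and iterating this, we conclude that $Z$ is hereditary. Now if $m^\psi|_C=0$, then $C\subseteq Z$, so the hereditary closure of $C$---which equals $\Lambda^0$ by hypothesis---is contained in $Z$; that forces $m^\psi=0$, contradicting $\sum_v m^\psi_v=1$. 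Hence $m^\psi|_C\neq 0$.

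To finish, I would restrict the $j$-th subinvariance relation to the block $C$. For $v,u\in C$ every edge in $v\Lambda^{e_j}u$ lies in $\Lambda_C=C\Lambda C$, so $A_j(v,u)=A_{C,j}(v,u)$; discarding the remaining nonnegative contributions yields $A_{C,j}(m^\psi|_C)\le(A_jm^\psi)|_C\le\rho(A_j)^\beta m^\psi|_C$, and $j$-criticality rewrites this as $A_{C,j}(m^\psi|_C)\le\rho(A_{C,j})^\beta m^\psi|_C$. Since $A_{C,j}$ is irreducible (because $\Lambda_C$ is coordinatewise irreducible) and $m^\psi|_C$ is a nonzero nonnegative vector, the subinvariance theorem \cite[Theorem~1.6]{Sen} gives $\rho(A_{C,j})^\beta\ge\rho(A_{C,j})$. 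Because $\alpha^r$ is the preferred dynamics we have $\rho(A_{C,j})=\rho(A_j)>1$, so taking logarithms yields $\beta\ge1$.

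The main obstacle is the middle paragraph: upgrading the $k$ separate per-colour subinvariance inequalities into the single structural fact that the zero set $Z$ of $m^\psi$ is a hereditary subset of $\Lambda^0$. This is precisely the point at which the hypothesis on the hereditary closure of $C$ is used, and once it is in place the conclusion is a routine application of Perron--Frobenius subinvariance. Note that, unlike Theorem~\ref{crit}, this argument needs neither the block decomposition of Proposition~\ref{utstructure} nor any case split, only the elementary identity $A_j(v,u)=A_{C,j}(v,u)$ for $v,u\in C$.
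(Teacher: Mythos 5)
Your proof is correct and follows essentially the same route as the paper's: the subinvariance relation from \cite[Proposition~4.1(a)]{aHLRS2}, nonvanishing of $m^\psi$ on $C$, and then the subinvariance theorem \cite[Theorem~1.6]{Sen} applied to the irreducible block $A_{C,j}$. The only difference is in how the nonvanishing of $m^\psi|_C$ is obtained: you show the zero set of $m^\psi$ is hereditary and invoke the hypothesis that the hereditary closure of $C$ is $\Lambda^0$, whereas the paper argues the contrapositive directly, picking $w$ with $\psi(q_w)>0$ and $\lambda\in C\Lambda w$ and using (T4) with the KMS condition to get $\psi(q_{r(\lambda)})\ge e^{-\beta r\cdot d(\lambda)}\psi(q_w)>0$.
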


\begin{proof} 
Suppose that $\psi$ is a KMS$_\beta$ state. We begin by showing that there exists $v\in C$ such that $\psi(q_v)>0$. Since $\sum_{v\in \Lambda^0}q_v$ is the identity of $\TC^*(\Lambda)$, we have $\sum_{v\in \Lambda^0}\psi(q_v)=1$, and there exists $w\in\Lambda^0$ such that $\psi(q_w)>0$. If $w\in C$, take $v=w$. If not, then $w$ belongs to the hereditary closure of $C$, and we have $C\Lambda w\not=\emptyset$. Thus there exists $\lambda\in \Lambda w$ such that $v=r(\lambda)\in C$. Now the Toeplitz-Cuntz-Krieger relation (T4) at $v$, the KMS$_\beta$ condition and (T3) at $w$ give
\[
\psi(q_v)\geq \psi(t_\lambda t_\lambda^*)=e^{-\beta r\cdot d(\lambda)}\psi(t_\lambda^*t_\lambda)=e^{-\beta r\cdot d(\lambda)}\psi(q_w)>0.
\]

 Applying \cite[Proposition~4.1(a)]{aHLRS2} to the singleton set $K=\{j\}$ shows that the vector $m^\psi=(m^\psi_v)=(\,\psi(q_v)\,)$ satisfies
\begin{equation*}
A_jm^\psi\leq e^{r_j\beta}m^\psi=\rho(A_j)^\beta m^\psi.
\end{equation*}
If we order the vertices of $\Lambda$ so that those in $C$ come first, then  $A_j$ has block form
\begin{equation*}
A_j=\begin{pmatrix}A_{C,j}&\star\\0&\star
\end{pmatrix}.
\end{equation*}
Since $C$ is $j$-critical, it follows that $m^\psi|_{C}$ satisfies
\begin{equation*}
A_{C,j}(m^\psi|_{C})\leq \rho(A_{j})^\beta m^\psi|_{C}=\rho(A_{C,j})^\beta m^\psi|_{C}.
\end{equation*}
Since $A_{C,j}$ is irreducible and since the vector $m^\psi|_C$ is non-zero (at the vertex $v$ in the first paragraph), the subinvariance theorem \cite[Theorem~1.6]{Sen} implies that $\rho(A_{C,j})\leq \rho(A_{C,j})^\beta$, and we have $\beta\geq 1$.
\end{proof}

\begin{prop}\label{prop:Cminimal-modified}
Suppose that $\Lambda$ is a finite $k$-graph without sources or sinks, and that $\alpha^r$ is the preferred dynamics on $\TC^*(\Lambda)$. Suppose that $C$ is a strongly connected component of $\Lambda$ such that $\Lambda_C$ is coordinatewise irreducible, and  that there exists $j$ such that  $C$ is $j$-critical and that $H_j:=\{w\in\Lambda^0:C\Lambda^{\N e_j}w\neq\emptyset\}=\Lambda^0$. 
\begin{enumerate}
\item\label{idiK} Every KMS$_1$ state of $(\TC^*(\Lambda),\alpha^r)$ factors through a KMS$_1$ state $\phi$ of the system $(\TC^*(\Lambda_C),\alpha^r)$. Moreover, $\phi$ further factors through the quotient by the ideal generated by
\begin{equation}\label{ideal-gens}
\Big\{q_v-\sum_{e\in v\Lambda_C^{e_i}} t_e t^*_e : v\in \Lambda^0  
\text{and $C$ is $i$-critical} \Big\}.
\end{equation}
\item
Suppose that the numbers $\{\ln \rho(A_{C,i}):1\leq i\leq k\}$ are rationally independent. Then there exists a unique KMS$_1$ state $\psi$ of $(\TC^*(\Lambda),\alpha^r)$, and it satisfies
\begin{equation}\label{notCK}
\psi\Big(q_v-\sum_{f\in v\Lambda_C^{e_i}}t_f t^*_f\Big)\ne 0\quad\text{for $i$ such that $C$ is not $i$-critical.}
\end{equation}
\end{enumerate}
\end{prop}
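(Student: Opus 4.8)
The plan is to build on Theorem~\ref{crit} and Proposition~\ref{boundbeta}, and then feed the resulting picture into the Perron--Frobenius analysis for coordinatewise irreducible graphs from \cite{aHLRS2, aHLRS3}. First, for part~\eqref{idiK}, observe that since $H_j=\Lambda^0$, the hereditary set $H$ of Theorem~\ref{crit} is exactly $\Lambda^0\backslash C$, so that theorem says every KMS$_1$ state of $(\TC^*(\Lambda),\alpha^r)$ vanishes on $I_H$ and hence factors through a KMS$_1$ state of $\TC^*(\Lambda\backslash H)$. But $\Lambda\backslash H=\Lambda_C$ because $H=\Lambda^0\backslash C$, and one checks that the induced dynamics on $\TC^*(\Lambda_C)$ is still $\alpha^r$: indeed $r_i=\ln\rho(A_i)=\ln\rho(A_{C,i})$ for every $i$ for which $C$ is $i$-critical, but in fact we need this for \emph{all} $i$, and this is where the hypothesis bites --- since $C$ is $j$-critical and $\Lambda_C$ is coordinatewise irreducible, \cite[Lemma~2.1]{aHLRS2} gives a common Perron--Frobenius eigenvector for the $A_{C,i}$; combined with $H_j=\Lambda^0$ and the block-triangular structure of Proposition~\ref{utstructure}, $C$ must in fact be the unique minimal component and $\rho(A_{C,i})=\rho(A_i)$ for every $i$, so $\alpha^r$ really is the preferred dynamics of $\Lambda_C$. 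Thus we get a KMS$_1$ state $\phi$ of $(\TC^*(\Lambda_C),\alpha^r)$ for the preferred dynamics on the coordinatewise irreducible graph $\Lambda_C$.

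For the further factorisation in \eqref{idiK}, I would examine the vector $m^\phi=(\phi(q_v))_{v\in C}$. By \cite[Proposition~4.1(a)]{aHLRS2} applied with $\beta=1$ and any singleton $K=\{i\}$ with $C$ $i$-critical, together with the subinvariance theorem \cite[Theorem~1.6]{Sen} exactly as in the proof of Theorem~\ref{crit} (equation~\eqref{PFforj}), $m^\phi$ is a common Perron--Frobenius eigenvector: $A_{C,i}m^\phi=\rho(A_{C,i})m^\phi$ for each $i$-critical $i$. Now for such $i$, the KMS$_1$ condition gives $\phi(q_v)\geq \sum_{e\in v\Lambda_C^{e_i}}\phi(t_et_e^*)=e^{-r_i}\sum_{e}\phi(q_{s(e)})=\rho(A_{C,i})^{-1}(A_{C,i}m^\phi)_v=\phi(q_v)$, forcing equality $\phi\bigl(q_v-\sum_{e\in v\Lambda_C^{e_i}}t_et_e^*\bigr)=0$ for every $v$. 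Since these differences are fixed by $\alpha^r$ and $\phi$ is a state annihilating a positive element, \cite[Lemma~2.2]{aHLRS1} shows $\phi$ vanishes on the ideal they generate, giving the claimed factorisation.

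For part~(b), assume the $\{\ln\rho(A_{C,i})\}$ are rationally independent. Then by the normalisation \eqref{normalise} at most one can equal $0$... but $r\in(0,\infty)^k$ rules that out, so $\max_i r_i^{-1}\ln\rho(A_{C,i})=1$ with all $r_i=\ln\rho(A_{C,i})$; rational independence forces the max to be attained by exactly one index, i.e. $C$ is $i$-critical for exactly one $i$. By part~\eqref{idiK} every KMS$_1$ state of $(\TC^*(\Lambda),\alpha^r)$ comes from a KMS$_1$ state of $\TC^*(\Lambda_C)$ that kills the ideal \eqref{ideal-gens}; conversely any such state pulls back. So it suffices to show $(\TC^*(\Lambda_C),\alpha^r)$ has a unique KMS$_1$ state, which moreover does \emph{not} satisfy the Cuntz--Krieger relation in the non-critical directions. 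This is precisely the content of the one-component analysis: with rational independence, \cite[Theorem~6.1]{aHLRS3} (the coordinatewise-irreducible case, where simplicity of the Cuntz--Krieger quotient corresponds to uniqueness at the critical temperature) gives a unique KMS$_1$ state; its restriction to the coordinates where $C$ is not critical is governed by a genuine Toeplitz--Euler-factor $\prod_{i:\text{ not }i\text{-critical}}(1-\rho(A_{C,i})e^{-r_i})$-type expression which is strictly positive, yielding \eqref{notCK}. The main obstacle is the bookkeeping in the first paragraph: verifying carefully that $H_j=\Lambda^0$ plus coordinatewise irreducibility of $\Lambda_C$ really forces $C$ to be $i$-critical for \emph{every} $i$ (equivalently that $\alpha^r$ restricts to the preferred dynamics on $\Lambda_C$), since the whole argument of part~\eqref{idiK} depends on being able to invoke the single-component results of \cite{aHLRS2,aHLRS3} for the preferred dynamics rather than a merely normalised one.
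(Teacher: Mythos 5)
There is a genuine gap, and it sits exactly where you flagged ``the main obstacle''. Your first paragraph asserts that $H_j=\Lambda^0$ together with coordinatewise irreducibility of $\Lambda_C$ forces $\rho(A_{C,i})=\rho(A_i)$ for \emph{every} $i$, so that the induced dynamics on $\TC^*(\Lambda_C)$ is again preferred. This is not true and is not what the proposition says: $H_j=\Lambda^0$ only tells you that every vertex is reachable from $C$ by a path of colour $j$, hence that $\Lambda^0\backslash C$ is hereditary; it says nothing about the spectral radii in the other coordinate directions, and a component downstream of $C$ can perfectly well satisfy $\rho(A_{D,i})>\rho(A_{C,i})$ for some $i\neq j$ (the block upper-triangular form of Proposition~\ref{utstructure} gives $\rho(A_i)=\max_{C'}\rho(A_{C',i})$, and the maximum need not be attained at $C$ in every coordinate). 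Indeed the statement \eqref{notCK} about ``$i$ such that $C$ is not $i$-critical'' would be vacuous, and the extra hypothesis ``$C$ is $i$-critical for every $i$'' in Corollary~\ref{corpreferred-modified} redundant, if your claim held. Because of this, you cannot invoke the preferred-dynamics results \cite[Theorem~7.2]{aHLRS2} or \cite[\S7]{aHLRS3} on $\Lambda_C$. The correct route is to observe that the induced dynamics on $\TC^*(\Lambda_C)$, while possibly not preferred, still satisfies the standing normalisation $\max_i r_i^{-1}\ln\rho(A_{C,i})=1$ precisely because $C$ is $j$-critical (and $r_i\geq\ln\rho(A_{C,i})$ always, by Corollary~\ref{calcrho}); one then applies the results of \cite{aHKR} for such dynamics, namely Proposition~4.2(b) for the factorisation through the ideal \eqref{ideal-gens}, Proposition~4.2(c) for uniqueness under rational independence, and Theorem~5.1(c) for \eqref{notCK}.

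Two further points. Your part~(b) is internally inconsistent: having claimed in part~(a) that $C$ is $i$-critical for all $i$, you then argue that rational independence of $\{\ln\rho(A_{C,i})\}$ forces $C$ to be $i$-critical for exactly one $i$. Neither deduction is valid; rational independence of the $\ln\rho(A_{C,i})$ places no constraint on how many of the equalities $\rho(A_{C,i})=\rho(A_i)$ hold (when $C$ is the only component they all hold). On the positive side, your second paragraph --- the subinvariance argument showing $A_{C,i}m^\phi=\rho(A_{C,i})m^\phi$ for critical $i$ and hence $\phi\bigl(q_v-\sum_{e\in v\Lambda_C^{e_i}}t_et_e^*\bigr)=0$, followed by \cite[Lemma~2.2]{aHLRS1} --- is correct and amounts to a self-contained proof of the part of \cite[Proposition~4.2(b)]{aHKR} that is needed here; that piece survives unchanged once the dynamics on $\Lambda_C$ is treated as normalised rather than preferred.
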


\begin{proof} Let $\psi$ be a KMS$_1$ state of $(\TC^*(\Lambda),\alpha^r)$.
Since $\{w\in\Lambda^0:C\Lambda^{\N e_j}w\neq\emptyset\}=\Lambda^0$, Theorem~\ref{crit} applies with $H=\Lambda^0\backslash C$, and implies that $\psi$ factors through a state $\phi$ of $\TC^*(\Lambda_C)$. Corollary~\ref{calcrho} implies that $r_i=\ln\rho(A_i)\geq\ln\rho(A_{C,i})$ for all $i$, and since $C$ is $j$-critical, the set $\{i:r_i=\ln\rho(A_{C,i})\}$ is nonempty. So   \cite[Proposition~4.2(b)]{aHKR} applies to $\Lambda_C$, and  implies that $\phi$ factors through the quotient   by the ideal generated by \eqref{ideal-gens}. This gives \eqref{idiK}. 

Since $\rho(A_j)=\rho(A_{C,j})$ and $A_{C,j}$ is irreducible, Proposition~4.2(c) of  \cite{aHKR}  implies that there exists a unique KMS$_1$ state $\phi$ of $(\TC^*(\Lambda_C),\alpha^r)$. Then $\psi=\phi\circ q_{\Lambda^0\backslash C}$ is a KMS$_1$ state of $(\TC^*(\Lambda),\alpha^r)$. Since every KMS$_1$ state factors through $q_{\Lambda^0\backslash C}$, the uniqueness in the first sentence of this paragraph shows that $\psi$ is the only KMS$_1$ state  of $(\TC^*(\Lambda),\alpha^r)$.  Theorem~5.1(c) of \cite{aHKR} gives \eqref{notCK}. 
\end{proof}

\begin{cor}\label{corpreferred-modified}
Suppose that $\Lambda$ is a finite $k$-graph without sources or sinks, and that $\alpha^r$ is the preferred dynamics on $\TC^*(\Lambda)$. Suppose that $C$ is a strongly connected component of $\Lambda$ such that $\Lambda_C$ is coordinatewise irreducible,  that $C$ is $i$-critical for every $i$ and that there exists a $j$ such that $\{w\in\Lambda^0:C\Lambda^{\N e_j}w\neq\emptyset\}=\Lambda^0$. Then
every KMS$_1$ state of $\TC^*(\Lambda)$ factors though a state of $C^*(\Lambda_C)$.
\end{cor}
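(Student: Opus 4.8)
The plan is to derive the Corollary directly from Proposition~\ref{prop:Cminimal-modified}. First I would check that the hypotheses here are exactly those of that proposition: $\Lambda$ is a finite $k$-graph without sources or sinks, $\alpha^r$ is the preferred dynamics, $\Lambda_C$ is coordinatewise irreducible, $C$ is $j$-critical for the distinguished index $j$, and $\{w\in\Lambda^0:C\Lambda^{\N e_j}w\neq\emptyset\}=\Lambda^0$. So given a KMS$_1$ state $\psi$ of $(\TC^*(\Lambda),\alpha^r)$, part~\eqref{idiK} of Proposition~\ref{prop:Cminimal-modified} produces a KMS$_1$ state $\phi$ of $(\TC^*(\Lambda_C),\alpha^r)$ such that $\psi=\phi\circ q_{\Lambda^0\backslash C}$ (the quotient map of Proposition~\ref{modhered}), and furthermore $\phi$ factors through the quotient of $\TC^*(\Lambda_C)$ by the ideal $J$ generated by the set displayed in \eqref{ideal-gens}.

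The remaining step is to identify $\TC^*(\Lambda_C)/J$ with the graph algebra $C^*(\Lambda_C)$. Since we are assuming that $C$ is $i$-critical for \emph{every} $i$, and since for $v\notin C$ neither $q_v$ nor any edge of $v\Lambda_C^{e_i}$ belongs to $\TC^*(\Lambda_C)$, the generating set in \eqref{ideal-gens} reduces to $\{q_v-\sum_{e\in v\Lambda_C^{e_i}}t_e t_e^*:v\in C,\ 1\le i\le k\}$. Coordinatewise irreducibility of $\Lambda_C$ means each vertex matrix $A_{C,i}$ is irreducible, hence has no zero row, so every vertex of $C$ emits an $e_i$-edge inside $\Lambda_C$ for each $i$; thus $\Lambda_C$ has no sources, and $C^*(\Lambda_C)$ is by definition the quotient of $\TC^*(\Lambda_C)$ in which the inequalities (T4) become equalities. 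By the factorisation property it suffices to impose equality for $n=e_1,\dots,e_k$, so $\TC^*(\Lambda_C)/J=C^*(\Lambda_C)$. Composing the two factorisations, $\psi$ factors through a state of $C^*(\Lambda_C)$, as required.

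There is no real obstacle here: the substance is all in Proposition~\ref{prop:Cminimal-modified}, and what is left is the bookkeeping that the hypothesis ``$i$-critical for all $i$'' turns the ideal \eqref{ideal-gens} into the full Cuntz-Krieger ideal, together with the (already used) observation that coordinatewise irreducibility forces $\Lambda_C$ to have no sources, so that this ideal is exactly the one defining the graph algebra $C^*(\Lambda_C)$.
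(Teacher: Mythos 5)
Your proposal is correct and follows essentially the same route as the paper: the paper's own proof simply observes that when $C$ is $i$-critical for every $i$, the generating set \eqref{ideal-gens} becomes the full collection of gap projections $q_v-\sum_{e\in v\Lambda_C^{e_i}}t_et_e^*$, so the quotient in Proposition~\ref{prop:Cminimal-modified}\eqref{idiK} is exactly $C^*(\Lambda_C)$. The extra details you supply (that coordinatewise irreducibility rules out sources in $\Lambda_C$, so the Cuntz--Krieger quotient is the one obtained by forcing equality in (T4) for the coordinate degrees) are correct and merely make explicit what the paper leaves implicit.
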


\begin{proof}
In this case the set at \eqref{ideal-gens} is \[\Big\{q_v-\sum_{e\in v\Lambda_C^{e_i}} t_e t^*_e : v\in \Lambda^0   \Big\},\]
and hence the result follows from part \eqref{idiK}  of Proposition~\ref{prop:Cminimal-modified}.
\end{proof}

\section{When a hereditary component dominates}\label{Dbigger}

In the last section, we discussed graphs in which the spectral radius of one or more vertex matrices are achieved on components which have a nontrivial hereditary closure. We now investigate graphs in which the spectral radii are all achieved on a hereditary component.

We suppose as usual that $\Lambda$ is a finite $k$-graph without sinks or sources.  In the main theorem of this section (Theorem~\ref{thm:case2-modified}), we suppose that there is a strongly connected component $D$ which is also hereditary, and for which the graph $\Lambda_D=D\Lambda D$ is coordinatewise irreducible. Suppose further that 
\begin{equation*}
\rho(A_i)=\rho(A_{D,i})>\rho(A_{C,i})\quad\text{for $1\leq i\leq k$ and $C\in\Cc\backslash\{D\}$ satisfying $C\Lambda D\not=\emptyset$.}
\end{equation*} (For otherwise, some component $C$ with $C\Lambda D\neq\emptyset$ would be $j$-critical, and provided $C\Lambda^{\N e_j} D\neq\emptyset$, we  would apply Theorem~\ref{crit} to replace $\Lambda$ with a smaller graph.) The hypotheses in the next two propositions are a little weaker than this: they do not imply that $\rho(A_{D,i})=\rho(A_i)$ for all $i$. We have included these slightly stronger results because we can see a version of Theorem~\ref{thm:case2-modified} with these weaker hypotheses might be very useful.

For graphs with the properties described above, the preferred dynamics on $\TC^*(\Lambda)$ is also the preferred dynamics on the subalgebra $\TC^*(\Lambda_D)$. There is always a KMS$_1$ state on $\TC^*(\Lambda_D)$, and we will show that this can be scaled back to a KMS functional and then extended to a state on $\TC^*(\Lambda)$. To motivate our construction, we recall what worked for a $1$-graph $E$ in \cite[Theorem~4.3]{aHLRS4}. There we extended the unimodular Perron-Frobenius eigenvector $x$ for $A_D$ to an eigenvector $z=(y,x)$ for $A$ with eigenvalue $\rho(A)$; to do this, we wrote the vertex matrix in block form
\[
A=\begin{pmatrix} A_{E^0\backslash D} & A_{E^0\backslash D,D}\\ 0 & A_{D}\end{pmatrix}
\]
with $\rho(A_{E^0\backslash D})<\rho(A_D)=\rho(A)$, and took
\begin{equation}\label{fromk=1}
y=\big(\rho(A)1_{E^0\backslash D}-A_{E^0\backslash D}\big)^{-1}A_{E^0\backslash D,D}x.
\end{equation}
We then showed by a limiting argument that there is a KMS$_{\ln\rho(A)}$ functional $\phi$ on $\TC^*(E)$ with $(y,x)=\big(\phi(q_v)\big)_{v\in E^0}$. (Had we normalised the dynamics in \cite{aHLRS4} as we have done here, this would have been a KMS$_1$ functional rather than a a KMS$_{\ln\rho(A)}$ functional.) On the face of it, to apply \eqref{fromk=1} to a $k$-graph we  would have to first choose a vertex matrix $A_i$. But, remarkably, it turns out that we always get the same $y$ and $z$. It is crucial in the proof of this that the different vertex matrices commute with each other.

\begin{prop}\label{evectors}
Suppose that $\Lambda$ is a finite $k$-graph without sources or sinks. Suppose that $D$ is a nontrivial strongly connected component which is hereditary, and that for every other component $C\in\Cc$ such that $C\Lambda D\not= \emptyset$, we have 
\begin{equation}\label{Dhered}
\rho(A_{D,i})>\rho(A_{C,i})\quad\text{ for $1\leq i\leq k$.}
\end{equation}
Suppose that $x$ is a nonnegative eigenvector of $A_{D,i}$ with eigenvalue $\rho(A_{D,i})$ for all $1\leq i\leq k$. Take $H:=\{v\in \Lambda^0: v\Lambda D=\emptyset\}$ and $F:=\Lambda^0\backslash(D\cup H)$. Then with respect to the decomposition $\Lambda^0=F\cup D \cup H$, each vertex matrix has block form
\begin{equation}\label{jblock}
A_i=\begin{pmatrix} E_i & B_i&\star\\ 0 & A_{D,i}&0\\0&0&A_{H,i}\end{pmatrix}. 
\end{equation}
where $E_i=A_{F,i}$.
\begin{enumerate}
\item\label{equaly} For $1\leq i,j\leq k$ we have
\begin{equation}\label{defy}
(\rho(A_{D,i})1_F-E_i)^{-1}B_i x=(\rho(A_{D,j})1_F-E_j)^{-1}B_jx.
\end{equation}
\item\label{defy2} Write $y$ for the vector \eqref{defy}. Then $y$ is nonnegative, and for each $i$, $(y,x,0)$ is an eigenvector of $A_i$ with eigenvalue $\rho(A_{D,i})$.
\end{enumerate}
\end{prop}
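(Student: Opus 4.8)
The plan is to record the block structure first, then prove the invertibility that makes the formula \eqref{defy} meaningful, then deduce \eqref{equaly} from the commutativity of the vertex matrices, and finally read off \eqref{defy2} by a block computation.

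First I would verify \eqref{jblock}. The set $H$ is hereditary: if $v\in H$ and $v\le w$, composing a path witnessing $v\le w$ with one from $w$ into $D$ would witness $v\Lambda D\neq\emptyset$, contradicting $v\in H$. Hence $D\cup H$ is hereditary and $F=\Lambda^0\backslash(D\cup H)$ is forwards hereditary, so $\Lambda_F$ is a $k$-graph and $E_i:=A_{F,i}$ makes sense. Since $D$ and $H$ are hereditary, neither emits edges leaving itself, so the $D$-row and $H$-row of $A_i$ have the shape in \eqref{jblock}, with $B_i=A_{F,D,i}$ and top-right block $A_{F,H,i}$. To define $y$ I need $\rho(E_i)<\rho(A_{D,i})$ so that $M_i:=\rho(A_{D,i})1_F-E_i$ is invertible. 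Every strongly connected component of $\Lambda$ lies inside exactly one of $F$, $D$, $H$, and a component $C\subseteq F$ has $C\Lambda D\neq\emptyset$ (no vertex of $C$ lies in $H$), so \eqref{Dhered} gives $\rho(A_{C,i})<\rho(A_{D,i})$. Ordering the vertices of $F$ compatibly with reachability makes $E_i$ block upper triangular with diagonal blocks the $A_{C,i}$ for components $C\subseteq F$ and $1\times1$ zero blocks at trivial components; hence $\rho(E_i)=\max\{\rho(A_{C,i}):C\subseteq F\}<\rho(A_{D,i})$, an empty maximum being $0<\rho(A_{D,i})$ since $A_{D,i}\neq0$ ($D$ being a nontrivial hereditary component of a sourceless graph carries a cycle of each colour). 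So $M_i$ is invertible with $M_i^{-1}=\sum_{n\ge0}\rho(A_{D,i})^{-n-1}E_i^n\ge0$, and \eqref{defy} says precisely that $M_i^{-1}B_ix=M_j^{-1}B_jx$.

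For \eqref{equaly} the idea is to compare blocks of $A_iA_j=A_jA_i$. The $(F,F)$-block gives $E_iE_j=E_jE_i$, so the $M_i$ commute pairwise and hence so do the $M_i^{-1}$. The $(F,D)$-block gives $E_iB_j+B_iA_{D,j}=E_jB_i+B_jA_{D,i}$; applying this to $x$, using $A_{D,i}x=\rho(A_{D,i})x$ and the same for $j$, and rearranging gives $M_j(B_ix)=M_i(B_jx)$. Multiplying on the left by $M_i^{-1}M_j^{-1}$ and using commutativity yields $M_i^{-1}B_ix=M_j^{-1}B_jx$, which is \eqref{defy}. Then \eqref{defy2} follows: writing $y=M_i^{-1}(B_ix)$ for the common value, $y\ge0$ because $M_i^{-1}\ge0$ and $B_ix\ge0$; and computing $A_i(y,x,0)$ blockwise via \eqref{jblock}, the $D$- and $H$-entries are $A_{D,i}x=\rho(A_{D,i})x$ and $0$, while the $F$-entry is $E_iy+B_ix=\rho(A_{D,i})y$, the last equality being exactly $M_iy=B_ix$. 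So $(y,x,0)$ is an eigenvector of $A_i$ for $\rho(A_{D,i})$.

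The block computations and the Neumann series are routine; the single place where the higher-rank structure is genuinely used is the commutativity $A_iA_j=A_jA_i$ in the proof of \eqref{equaly}, and that is exactly what forces the vectors obtained from different coordinates to agree. The step most in need of care is the invertibility of $M_i$: this is where one must pin down which components sit inside $F$ and invoke \eqref{Dhered}.
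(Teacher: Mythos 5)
Your proposal is correct and follows essentially the same route as the paper: read off the block form from heredity of $D$ and $H$, extract $E_iB_j+B_iA_{D,j}=E_jB_i+B_jA_{D,i}$ from $A_iA_j=A_jA_i$, apply it to $x$ and use commutativity of the matrices $\rho(A_{D,i})1_F-E_i$ to get \eqref{defy}, then obtain nonnegativity from the Neumann series and verify the eigenvector equation blockwise. The only difference is that you spell out why $\rho(E_i)<\rho(A_{D,i})$ (via the triangularisation of $E_i$ over the components contained in $F$, each of which satisfies $C\Lambda D\neq\emptyset$), a step the paper asserts directly from \eqref{Dhered}; your elaboration is correct.
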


\begin{proof}
Since $D$ is strongly connected, we have $D\cap H=\emptyset$; since $D$ is hereditary, we must have $D\Lambda H=\emptyset$. Thus the matrices $A_i$ have block form  \eqref{jblock}.

Now fix $i,j\in \{1,\dots ,k\}$. Recall that the factorisation property implies that $A_iA_j=A_jA_i$, and then the block form of the product gives 
$E_iE_j=E_jE_i$ and 
\begin{equation}\label{eq:case2_matrix}
E_iB_j+B_iA_{D,j}=E_jB_i+B_jA_{D,i}. 
\end{equation}
Since $x$ is an eigenvector for both $A_{D,i}$ and $A_{D,j}$, \eqref{eq:case2_matrix} gives
\begin{align}\label{calcivj}
(\rho(A_{D,i})1_F-E_i)B_j x &=\rho(A_{D,i})B_jx-E_iB_jx\\
&=\rho(A_{D,i})B_jx-(E_jB_ix+B_jA_{D,i}x-B_iA_{D,j}x)\notag\\
&=\rho(A_{D,i})B_jx-(E_jB_ix+B_j\rho(A_{D,i})x-B_iA_{D,j}x)\notag\\
&=B_iA_{D,j}x-E_jB_ix\notag\\
&=(\rho(A_{D,j})1_F-E_j)B_ix.\notag
\end{align}
The hypothesis \eqref{Dhered} implies that $\rho(A_{D,i})>\rho(E_i)$, and similarly for $j$. Thus the $F\times F$ matrices $\rho(A_{D,i})1_F-E_i$ and $\rho(A_{D,j})1_F-E_j$ are invertible. Since the matrices $E_i$ and $E_j$ commute, so do $(\rho(A_{D,i})1_F-E_i)^{-1}$ and $(\rho(A_{D,j})1_F-E_j)^{-1}$. So we deduce from \eqref{calcivj} that
\[
(\rho(A_{D,j})1_F-E_j)^{-1}B_jx=(\rho(A_{D,i})1_F-E_i)^{-1}B_ix,
\]
which is part~\eqref{equaly}.

Now write $y$ for the common vector $(\rho(A_{D,i})1_F-E_i)^{-1}B_ix$. Since $x\geq 0$ and $B_i$ has nonnegative entries, the expansion
\begin{align*}
(\rho(A_{D,i})1_F-E_i)^{-1}&=\rho(A_{D,i})^{-1}(1_F-\rho(A_{D,i})^{-1}E_i)^{-1}\\
&=\rho(A_{D,i})^{-1}\sum_{n=0}^\infty\rho(A_{D,i})^{-n}E^n_i
\end{align*}
shows that $y$ is nonnegative. 

We claim that $z:=(y,x,0)$ satisfies $A_iz=\rho(A_{D,i})z$ for $1\le i\le k$.  To see this, we fix $i$ and compute
\[
A_iz=\begin{pmatrix} E_i & B_i&\star\\ 0 & A_{D,i}&0\\0&0&A_{H,i}\end{pmatrix}\begin{pmatrix} y\\ x\\0\end{pmatrix}=\begin{pmatrix} E_i y+B_i x \\ A_{D,i}x\\0\end{pmatrix}.
\]
Since $x$ is an eigenvector for $A_{D,i}$ with eigenvalue $\rho(A_{D,i})$, it suffices to show that $E_i y+B_i x=\rho(A_{D,i}) y$. For this, we compute:
\begin{align*}
E_i y+B_ix 
&=E_i (\rho(A_{D,i})1_F-E_i)^{-1}B_ix +B_ix\\
&=(\rho(A_{D,i})1_F-E_i)^{-1} E_iB_ix+B_i x \\
&=(\rho(A_{D,i})1_F-E_i)^{-1}\big(E_iB_ix +(\rho(A_{D,i})1_F-E_i)B_ix \big)\\
&=(\rho(A_{D,i})1_F-E_i)^{-1}(\rho(A_{D,i})B_i x)\\
&=\rho(A_{D,i})\big( (\rho(A_{D,i})1_F-E_i)^{-1}B_ix\big)\\
&=\rho(A_{D,i})y.\qedhere
\end{align*}
\end{proof}

\begin{rmk}\label{Bxsimul0}
If the graphs $\{\Lambda_C:C\in \Cc\}$ are coordinatewise irreducible, then the relation \eqref{Dhered} imposes some restrictions on the components $B_i$ in the block decomposition \eqref{jblock}. Since every component $C\subset F$ has $C\Lambda D\not=\emptyset$, at least one $B_j$ is nonzero. We claim that $B_i$ is then nonzero for all $i$.

To see this, suppose that $B_i=0$ for some $i$. Then \eqref{eq:case2_matrix} says that $E_iB_j=B_jA_{D,i}$. Lemma~2.1 of \cite{aHLRS2} implies that the matrices $\{A_{D,i}:1\leq i\leq k\}$ have a common Perron-Frobenius eigenvector $x$, and hence
\[
E_i(B_jx)=B_jA_{D,i}x=\rho(A_{D,i})B_jx.
\]
Since $x$ is a Perron-Frobenius eigenvector, it has strictly positive entries, and hence $B_jx$ is not the zero vector. Thus there is a component $C\subset F$ such that $(B_jx)|_C$ is a nonzero eigenvector for $A_{C,i}$ with eigenvalue $\rho(A_{D,i})$. But then $\rho(A_{C,i})\geq \rho(A_{D,i})$, which contradicts \eqref{Dhered}. Thus $B_i$ cannot be~$0$.

As a simple example to illustrate further, consider a $2$-graph with skeleton
\[
\begin{tikzpicture}[scale=1.5]
 \node[inner sep=0.5pt, circle] (u) at (0,0) {$u$};
    \node[inner sep=0.5pt, circle] (v) at (2,0) {$v$};
\draw[-latex, blue] (u) edge [out=190, in=250, loop, min distance=20, looseness=2.5] (u);
\draw[-latex, red, dashed] (u) edge [out=110, in=170, loop, min distance=20, looseness=2.5] (u);
\draw[-latex, blue] (v) edge [out=280, in=340, loop, min distance=20, looseness=2.5] (v);
\draw[-latex, red, dashed] (v) edge [out=30, in=90, loop, min distance=20, looseness=2.5] (v);
\draw[-latex, blue] (v) edge [out=180, in=0]   (u);
\node at (-0.55, -0.5) {$m_1$};
\node at (-0.6,0.55) {$m_2$};
\node at (2.65, -0.5) {\color{black} $n_1$};
\node at (2.55, 0.5 ) {\color{black} $n_2$};
\node at (1.05, 0.15) {$p$}; 
\end{tikzpicture}
\]
Since the blue-red paths from $v$ to $u$ are in one to one correspondence with the red-blue paths, we have $pn_2=m_2p$ and $m_2=n_2$. In particular, $B_1=(p)\neq 0$ and $B_2=0$.
The only hereditary component is $\{v\}$, but $\rho(A_{\{u\},2})=n_2=\rho(A_{\{v\},2})$, and the graph does not satisfy the hypothesis \eqref{Dhered} for $D=\{v\}$.
\end{rmk}

We now suppose that the matrices $A_{D,i}$ are irreducible. Then the Perron-Frobenius theorem implies that each $A_{D,i}$ has a strictly positive eigenvector with eigenvalue $\rho(A_{D,i})$, and there is a unique  unimodular eigenvector with $\ell^1$-norm 1. Since the matrices $A_{D,i}$ commute,  they share the same unimodular Perron-Frobenius eigenvector \cite[Lemma~2.1]{aHLRS2}.

\begin{prop}\label{existKMS1-modified}
Suppose that $\Lambda$ is a finite $k$-graph without sources or sinks. Consider the preferred dynamics $\alpha^r$ on $\TC^*(\Lambda)$. Suppose that $D$ is a hereditary and nontrivial strongly connected component of $\Lambda^0$ such that $\Lambda_{D}$ is coordinatewise irreducible, and such that $\rho(A_{D,i})>\rho(A_{C,i})$ for all $i$ and components $C\in \Cc\backslash\{D\}$ with $C\Lambda D\not=\emptyset$. Suppose that $x$ is the common unimodular Perron-Frobenius eigenvector of the $A_{D,i}$, take $z=(y,x,0)$ as in Proposition~\ref{evectors}, and write $b:=\|z\|_1>0$. Then there is a KMS$_1$ state $\psi$ of $(\TC^*(\Lambda),\alpha^r)$ such that
\begin{equation}\label{formpsi}
\psi(t_\mu t_\nu^*)=\delta_{\mu,\nu}\rho(A)^{-d(\mu)} b^{-1}z_{s(\mu)}\quad\text{for $\mu,\nu\in \Lambda$,}
\end{equation}
where $\rho(A)^{-d(\mu)}:=\prod_{i=1}^k\rho(A_i)^{-d(\mu)_i}$. 
\end{prop}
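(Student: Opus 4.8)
The plan is to verify directly that the formula \eqref{formpsi} defines a state satisfying the KMS$_1$ condition. First I would observe that since $z=(y,x,0)$ is a common nonnegative eigenvector of all the $A_i$ with eigenvalue $\rho(A_{D,i})=\rho(A_i)$ (by Proposition~\ref{evectors} together with the hypothesis $\rho(A_i)=\rho(A_{D,i})$, which follows here because $D$ is hereditary and $\rho(A_{D,i})>\rho(A_{C,i})$ forces, via Corollary~\ref{calcrho}, $\rho(A_i)=\max_C\rho(A_{C,i})=\rho(A_{D,i})$), the vector $m=b^{-1}z$ is a nonnegative $\ell^1$-normalised vector with $Am=\rho(A)m$ coordinatewise. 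The natural approach is to invoke the general existence machinery: by \cite[Theorem~6.1]{aHLRS2} (or the corresponding construction in \cite{aHLRS2} producing KMS states from subinvariant vectors), a nonnegative vector $\epsilon=(\epsilon_v)$ with $\|\epsilon\|_1=1$ and $A_i\epsilon\le\rho(A_i)\epsilon$ for all $i$ gives rise to a KMS$_1$ state; when in fact $A_i\epsilon=\rho(A_i)\epsilon$ for all $i$, the resulting state is exactly the one with $\psi(t_\mu t_\nu^*)=\delta_{\mu,\nu}\rho(A)^{-d(\mu)}\epsilon_{s(\mu)}$. So the bulk of the proof is simply to check that $m=b^{-1}z$ satisfies the hypotheses of that theorem, namely nonnegativity (clear, from Proposition~\ref{evectors}\eqref{defy2}), $\|m\|_1=1$ (by choice of $b$), and the subinvariance/eigenvector equations $A_i m=\rho(A_i)m$ (the content of Proposition~\ref{evectors}\eqref{defy2}, once we note $\rho(A_i)=\rho(A_{D,i})$).

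If instead one wants a self-contained argument rather than citing the existence theorem, I would define a linear functional $\psi$ on $A_0=\operatorname{span}\{t_\mu t_\nu^*\}$ by \eqref{formpsi} and proceed in the standard three steps. Step one: show $\psi$ is well-defined and positive; this is done by realising $\psi$ as (a limit of, or directly as) a vector state, e.g. constructing a representation on $\bigoplus_v \ell^2(\Lambda v)$ or on a suitable path space and exhibiting a vector $\xi$ with $\psi(a)=\langle a\xi,\xi\rangle$, using the eigenvector equation $Am=\rho(A)m$ to get the norm of $\xi$ right and to make the Toeplitz relations (T1)--(T5) compatible with the weights $\rho(A)^{-d(\mu)}m_{s(\mu)}$. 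Step two: verify $\psi(1)=\sum_v\psi(q_v)=\sum_v b^{-1}z_v=1$. Step three: check the KMS$_1$ condition on spanning elements, i.e. $\psi((t_\mu t_\nu^*)(t_\sigma t_\tau^*))=\psi((t_\sigma t_\tau^*)\alpha_i(t_\mu t_\nu^*))$ with $\alpha_i(t_\mu t_\nu^*)=e^{-r\cdot(d(\mu)-d(\nu))}t_\mu t_\nu^*=\rho(A)^{-(d(\mu)-d(\nu))}t_\mu t_\nu^*$; expanding the products via (T5) and using $Am=\rho(A)m$ to telescope the resulting sums, both sides collapse to the same expression. This is essentially the computation carried out in \cite{aHLRS2} for the irreducible case, now with the weight vector $z$ in place of the Perron--Frobenius vector.

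The main obstacle, and the reason this proposition is nontrivial, is positivity of $\psi$: the vector $z$ is \emph{not} a Perron--Frobenius eigenvector for the full matrix $A$ — it vanishes on $H$ and is only a ``one-sided'' extension of $x$ — so one must make sure the candidate functional is genuinely a state rather than merely a self-adjoint functional of norm one. The key point that rescues us is exactly the content of Proposition~\ref{evectors}: the commutativity of the vertex matrices forces the block-triangular inverse formula $y=(\rho(A_{D,i})1_F-E_i)^{-1}B_i x$ to be \emph{independent of $i$}, and the Neumann series expansion shows $y\ge 0$ componentwise; this nonnegativity is what allows the vector-state construction to go through. Once positivity is secured, the remaining normalisation and KMS computations are routine algebra with the eigenvector relation. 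I would therefore structure the proof as: (1) record that $\rho(A_i)=\rho(A_{D,i})$ here; (2) apply Proposition~\ref{evectors} to get $z\ge0$ with $A_iz=\rho(A_i)z$; (3) normalise to $m=b^{-1}z$; (4) invoke (or reprove) the existence theorem of \cite{aHLRS2} to obtain the KMS$_1$ state with the stated formula.
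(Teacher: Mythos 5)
Your outline has the right ingredients (nonnegativity of $z$ from Proposition~\ref{evectors}, normalisation, and the characterisation of KMS states by their values on vertex projections), but the central step --- positivity of the functional defined by \eqref{formpsi} --- is not actually established, and the route you propose for it fails. Theorem~6.1 of \cite{aHLRS2} requires $\beta r_i>\ln\rho(A_i)$ for all $i$; at $\beta=1$ for the preferred dynamics this is an equality, so you cannot invoke that theorem directly. Nor does the ``self-contained'' spatial construction go through at the critical temperature: the vector state built from weights $\Delta_\lambda=e^{-r\cdot d(\lambda)}\epsilon_{s(\lambda)}$ needs $\epsilon:=\prod_i(1-e^{-r_i}A_i)m$ with $m=\prod_i(1-e^{-r_i}A_i)^{-1}\epsilon$, but at $\beta=1$ the matrices $1-\rho(A_i)^{-1}A_i$ are singular and the series $\sum_{\nu\in v\Lambda}\rho(A)^{-d(\nu)}$ diverges for $v\in D$ (a finite irreducible matrix is $\rho$-recurrent), so no nonnegative weight vector reproduces $m=b^{-1}z$. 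Nonnegativity of $z$ alone does not imply positivity of the functional. The paper closes this gap by an approximation argument: it chooses $\beta_p\downarrow 1$, checks that $\epsilon^p:=\prod_i\bigl(1-e^{-\beta_p r_i}\rho(A_{D,i})\bigr)b^{-1}z$ is nonnegative (using $\rho(A_i)>1$ and $\beta_p>1$), applies \cite[Theorem~6.1]{aHLRS2} at each $\beta_p$ to get genuine KMS$_{\beta_p}$ states $\psi_p$ with $\psi_p(t_\mu t_\nu^*)=\delta_{\mu,\nu}e^{-\beta_p r\cdot d(\mu)}b^{-1}z_{s(\mu)}$, and then takes a weak* limit; Proposition~3.1(b) of \cite{aHLRS2} identifies the limit as a KMS$_1$ state. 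Some such limiting (or other genuinely new) argument is indispensable here.

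A secondary error: your step (1) asserts that $\rho(A_i)=\rho(A_{D,i})$ follows from the hypotheses via Corollary~\ref{calcrho}. It does not. The hypothesis only bounds $\rho(A_{C,i})$ for components $C$ with $C\Lambda D\neq\emptyset$; a component inside $H=\{v:v\Lambda D=\emptyset\}$ may carry the spectral radius, so $\rho(A_{D,i})<\rho(A_i)$ is possible (the paper remarks on exactly this after the proposition). Consequently $z$ is an eigenvector with eigenvalue $\rho(A_{D,i})$, not $\rho(A_i)$, and your proposed telescoping of the KMS computation using $A_im=\rho(A_i)m$ would be using a false identity. The paper's proof only needs $\rho(A_{D,i})\le\rho(A_i)$, which is what makes each factor $1-e^{-\beta_pr_i}\rho(A_{D,i})$ positive.
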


\begin{proof}
Choose a decreasing sequence $\{\beta_p\}$ such that $\beta_p\to 1$. Then  we have
\begin{equation}\label{betap>1}
\beta_p r_i=\beta_p\ln\rho(A_i)>\ln\rho(A_i)\quad\text{for all $p$ and $i$.}
\end{equation}
For fixed $p$, we define $y^{\beta_p}_v:=\sum_{\mu\in \Lambda v}e^{-\beta_p r\cdot d(\mu)}$, as in \cite[Theorem~6.1(a)]{aHLRS2}. By Proposition~\ref{evectors}\eqref{defy2}, we have $A_iz=\rho(A_{D,i})z$ for each $i$, and thus 
\[
\prod_{i=1}^k\big(1-e^{-\beta_p r_i}A_i\big)b^{-1}z =\prod_{i=1}^k\big(1-e^{-\beta_p r_i}\rho(A_{D,i})\big)b^{-1}z.
\]
Since $r_i=\ln\rho(A_i)$, and
\[
\rho(A_i)=\max\{\rho(A_{C,i}):C\in \Cc\}, 
\]
we have $\rho(A_{D,i})\leq \rho(A_i)$, and
\begin{align*}
1-e^{-\beta_p r_i}\rho(A_{D,i})&=1-\rho(A_i)^{-\beta_p}\rho(A_{D,i})\\
&\geq 1-\rho(A_i)^{-\beta_p}\rho(A_{i})\\
&=1-\rho(A_i)^{1-\beta_p}.
\end{align*}
It follows from our standing assumptions  in  \S\ref{subsecpreferred} that $\rho(A_i)>1$. Since also  $\beta_p>1$, we have $1-\rho(A_i)^{1-\beta_p}>0$ for all $i$. Thus
\[
\epsilon^p:=\prod_{i=1}^k(1-e^{-\beta_p r_i}A_i)b^{-1}z=\prod_{i=1}^k(1-\rho(A_{D,i}) e^{-\beta_p r_i})b^{-1}z
\]
belongs to $[0,\infty)^{\Lambda^0}$. Since \eqref{betap>1} implies that each $1-e^{-\beta_p r_i}A_i$ is invertible, we can recover
\[
b^{-1}z=\prod_{i=1}^k(1-e^{-\beta_p r_i}A_i)^{-1}\epsilon^p,
\]
and then $\|b^{-1}z\|_1=1$ implies that $\epsilon^p\cdot y^{\beta_p}=1$ (see \cite[Theorem~6.1(a)]{aHLRS2}). Now \cite[Theorem~6.1(b)]{aHLRS2} gives KMS$_{\beta_p}$ states $\psi_{p}$ of $(\TC^*(\Lambda),\alpha^r)$ satisfying
\begin{equation}\label{eq:psi_np}
\psi_{p}(t_\mu t^*_\nu)=\delta_{\mu,\nu} e^{-\beta_p r\cdot d(\mu)}b^{-1}z_{s(\mu)}\quad\text{for $\mu,\nu\in\Lambda$.}
\end{equation}

Since the state space of $\TC^*(\Lambda)$ is weak* compact, we may assume by passing to a subsequence that the sequence $\{\psi_{p}:p\in \N\}$ converges weak* to a state $\psi$ of $\TC^*(\Lambda)$. Letting $p\to \infty$ in \eqref{eq:psi_np} shows that 
\begin{equation*}
\psi(t_\mu t^*_\nu)=\delta_{\mu,\nu}\rho(A)^{-d(\mu)}b^{-1}z_{s(\mu)}\quad\text{for $\mu,\nu\in\Lambda$.}
\end{equation*}
Thus Proposition~3.1(b) of \cite{aHLRS2} implies that $\psi$ is a KMS$_1$ state of $(\TC^*(\Lambda),\alpha^r)$.
\end{proof}

\begin{rmk}
When $\rho(A_i)=\rho(A_{D,i})$ for some $i$, the vector $m^{\psi}=b^{-1}z$ is an eigenvector of $A_i$ with eigenvalue $\rho(A_i)$, and we have $A_im^{\psi}=\rho(A_i)m^{\psi}$. If $\rho(A_i)=\rho(A_{D,i})$ for all $i$, then \cite[Proposition~4.1]{aHLRS2} implies that the KMS$_1$ state $\psi$ factors through a state of $C^*(\Lambda)$.

However, it is possible that $\max\{\rho(A_{C,i}):C\in \Cc\}$ is attained on some component $C$ which lies in the set $H$ of Proposition~\ref{evectors}, and that we then have $\rho(A_{D,i})<\rho(A_i)$. Then we cannot deduce that $\psi$ factors through $C^*(\Lambda)$, even if there exists $j$ such that $\rho(A_{D,j})=\rho(A_j)$. Indeed, since $z|_{D}$ is a Perron-Frobenius eigenvector for $A_{D,i}$, applying Theorem 5.1(c) of \cite{aHKR} to the subalgebra $\TC^*(\Lambda_{D})\subset \TC^*(\Lambda)$ shows that the state $\psi$ does not vanish on the gap projections
\[
\Big\{q_v-\sum_{e\in v\Lambda^{e_i}} t_et_e^*:v\in D\text{ and }\rho(A_{D,i})<\rho(A_i)
\Big\}.
\]
\end{rmk}

We say that a strongly connected component $C$ is \emph{forwards hereditary} if $v\Lambda C\not=\emptyset$ implies $v\in C$. The hypothesis that ``$D$ is not forwards hereditary'' in the next result removes the uninteresting case in which $D$ is disjoint from the rest of the graph, in which case we can study $\Lambda_{D}$ and $\Lambda_{\Lambda\backslash D}$ separately.

\begin{thm}\label{thm:case2-modified}
Suppose that $\Lambda$ is a finite $k$-graph without sources or sinks such that the numbers $\{\ln\rho(A_i):1\leq i\leq k\}$ are rationally independent. Suppose that $D$ is a nontrivial strongly connected component which is hereditary but not forwards hereditary. We suppose further that $\Lambda_{D}$ is coordinatewise irreducible,  and that 
\begin{equation}\label{stronghyp}
\rho(A_{D,i})=\rho(A_i)>\rho(A_{C,i})\quad\text{for all $1\leq i\leq k$ and $C\in \Cc\backslash\{D\}$.}
\end{equation} 
Let $\alpha^r$ be the preferred dynamics on $\TC^*(\Lambda)$, and let $q:\TC^*(\Lambda)\to \TC^*(\Lambda \backslash D)$ be the quotient map. Then  every KMS$_1$ state of $(\TC^*(\Lambda),\alpha^r)$ is a convex combination of the state $\psi$ of Proposition~\ref{existKMS1-modified} and a state $\phi\circ q_D$ lifted from a KMS$_1$ state $\phi$ of $(\TC^*(\Lambda \backslash D),\alpha^r)$.
\end{thm}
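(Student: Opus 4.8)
The plan is to pass from KMS$_1$ states to the combinatorial vectors that parametrise them, split the vectors, and pass back. By \cite[Proposition~3.1]{aHLRS2} --- here the hypothesis that $\{\ln\rho(A_i)\}$ is rationally independent is used to see that a KMS$_1$ state is invariant under the whole gauge action of $\TT^k$, hence vanishes on $t_\mu t_\nu^*$ unless $d(\mu)=d(\nu)$ --- every KMS$_1$ state $\omega$ of $(\TC^*(\Lambda),\alpha^r)$ satisfies $\omega(t_\mu t_\nu^*)=\delta_{\mu,\nu}\rho(A)^{-d(\mu)}m^\omega_{s(\mu)}$, so is completely determined by the probability vector $m^\omega:=(\omega(q_v))_{v\in\Lambda^0}$; and by \cite[Proposition~4.1(a)]{aHLRS2} this vector satisfies $A_im^\omega\le\rho(A_i)m^\omega$ for all $i$. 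The same results applied to $\Lambda\backslash D$ with the (no longer preferred) induced dynamics $\alpha^r$ show that any probability vector $m'$ on $\Lambda^0\backslash D$ with $A_{\Lambda\backslash D,i}m'\le\rho(A_i)m'$ for all $i$ --- note the bound is still $\rho(A_i)=e^{r_i}$, not $\rho(A_{\Lambda\backslash D,i})$ --- is the vector of a KMS$_1$ state $\phi$ of $(\TC^*(\Lambda\backslash D),\alpha^r)$, and then $\phi\circ q_D$ is a KMS$_1$ state of $(\TC^*(\Lambda),\alpha^r)$ with vector the zero-extension $\bar m'$ of $m'$. Since $\omega\leftrightarrow m^\omega$ is affine, it suffices to show that every probability vector $m$ with $A_im\le\rho(A_i)m$ for all $i$ can be written $m=\mu m^\psi+(1-\mu)\bar m'$ with $\mu\in[0,1]$, where $m^\psi=b^{-1}z$ is the vector of the state $\psi$ of Proposition~\ref{existKMS1-modified} and $m'$ is as above.

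Use the decomposition $\Lambda^0=F\cup D\cup H$ and the block form \eqref{jblock} of Proposition~\ref{evectors}, and recall that $z=(y,x,0)$ with $y=(\rho(A_{D,i})1_F-E_i)^{-1}B_ix$, that $b=\|z\|_1>0$, and that \eqref{stronghyp} together with Proposition~\ref{evectors}\eqref{defy2} gives $A_im^\psi=\rho(A_i)m^\psi$. Fix $m$ as above and set $c:=\|m|_D\|_1$. The middle block of $A_im\le\rho(A_i)m$ together with $\rho(A_i)=\rho(A_{D,i})$ gives $A_{D,i}(m|_D)\le\rho(A_{D,i})(m|_D)$ for each $i$; since each $A_{D,i}$ is irreducible, the subinvariance theorem \cite[Theorem~1.6]{Sen} forces $m|_D$ to be $0$ or a common Perron--Frobenius eigenvector of the $A_{D,i}$, so in either case $m|_D=cx$. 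Put $\mu:=cb\ge0$.

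The crux is that $m-\mu m^\psi\ge0$. On $D$ the difference is $cx-\mu b^{-1}x=0$ by the choice of $\mu$, and on $H$ it is $m|_H\ge0$, so it remains to prove $m|_F\ge cy$. The top block of $A_im\le\rho(A_i)m$, after dropping the nonnegative contribution of the $(F,H)$ block, gives $(\rho(A_{D,i})1_F-E_i)\,m|_F\ge B_i(m|_D)=cB_ix$; as established in the proof of Proposition~\ref{evectors}, \eqref{stronghyp} forces $\rho(E_i)<\rho(A_{D,i})$, so $(\rho(A_{D,i})1_F-E_i)^{-1}$ has nonnegative entries, and applying it gives $m|_F\ge c(\rho(A_{D,i})1_F-E_i)^{-1}B_ix=cy$. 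Hence $m-\mu m^\psi\ge0$; since $m$ and $m^\psi$ are probability vectors, $\|m-\mu m^\psi\|_1=1-\mu$, so $0\le\mu\le1$. If $\mu=1$ then $m=m^\psi$. If $\mu<1$, put $m':=(1-\mu)^{-1}(m-\mu m^\psi)$: this is a probability vector vanishing on $D$, and since $A_im^\psi=\rho(A_i)m^\psi$ we have $A_im'\le\rho(A_i)m'$, which via \eqref{jblock} restricts to $A_{\Lambda\backslash D,i}(m'|_{\Lambda^0\backslash D})\le\rho(A_i)(m'|_{\Lambda^0\backslash D})$. Thus $m=\mu m^\psi+(1-\mu)\bar m'$ with $\bar m'$ the zero-extension of $m'|_{\Lambda^0\backslash D}$; the state $\mu\psi+(1-\mu)\phi\circ q_D$ (for the $\phi$ attached to $m'|_{\Lambda^0\backslash D}$) is then a KMS$_1$ state with the same vector $m$ as $\omega$, so by the first paragraph $\omega=\mu\psi+(1-\mu)\phi\circ q_D$.

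I expect the main obstacle to be the crux inequality $m|_F\ge cy$: it relies on $(\rho(A_{D,i})1_F-E_i)^{-1}\ge0$, which needs the strict domination $\rho(A_{D,i})>\rho(E_i)$ supplied by \eqref{stronghyp}, and on the fact that the vector $y$ built from $A_i$ does not depend on $i$ (Proposition~\ref{evectors}\eqref{equaly}), which is exactly where the commutativity of the vertex matrices of a $k$-graph is essential. A secondary delicate point is the reduction in the first paragraph, since knowing that $\omega$ is completely determined by $(\omega(q_v))$ is precisely where the rational independence of $\{\ln\rho(A_i)\}$ enters.
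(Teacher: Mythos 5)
Your overall strategy (reduce to the vectors $m^\omega=(\omega(q_v))$, show $m^\omega|_D$ is a multiple of $x$, subtract off that multiple of $m^\psi$, and recognise the remainder as coming from the quotient) is the same as the paper's, and your linear-algebraic derivation of the crux inequality $m|_F\ge cy$ from the top block of $A_im\le\rho(A_i)m$ is correct and is in fact a cleaner route than the paper's operator-algebraic ``quick exit'' argument. However, there is a genuine gap in the final step.

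The gap is your claim that any probability vector $m'$ on $\Lambda^0\backslash D$ satisfying $A_{\Lambda\backslash D,i}\,m'\le\rho(A_i)\,m'$ for all $i$ is the vector of a KMS$_1$ state of $(\TC^*(\Lambda\backslash D),\alpha^r)$. Proposition~4.1(a) of \cite{aHLRS2} gives the coordinatewise subinvariance as a \emph{necessary} condition, but the converse is not ``the same result'': by Theorem~6.1 of \cite{aHLRS2}, the vectors of KMS$_1$ states of the quotient are exactly those $m'$ for which $\epsilon:=\prod_{i=1}^k\bigl(1-e^{-r_i}E_i\bigr)m'$ is nonnegative, and for $k\ge 2$ this product condition is strictly stronger than the $k$ separate conditions $\bigl(1-e^{-r_i}E_i\bigr)m'\ge 0$ (the cross terms such as $e^{-r_1-r_2}E_1E_2m'$ have no controlled sign from the individual inequalities). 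Indeed Proposition~4.1 of \cite{aHLRS2} records the subinvariance relation for \emph{every} nonempty $K\subset\{1,\dots,k\}$ precisely because the singleton cases do not imply the full product case. The paper's proof spends its entire second half closing exactly this hole: it starts from the full relation $\prod_{i=1}^k\bigl(1-\rho(A_i)^{-1}A_i\bigr)m^\theta\ge 0$, expands the top block, and uses the identity $\bigl(\rho(A_k)1-E_k\bigr)^{-1}B_kx=y$ together with the commutativity of the $E_i$ to rewrite the cross term as $\prod_{i=1}^k\bigl(1-\rho(A_i)^{-1}E_i\bigr)\bigl(am^\psi|_{\Lambda\backslash D}\bigr)$, thereby obtaining $\prod_{i=1}^k\bigl(1-\rho(A_i)^{-1}E_i\bigr)\kappa\ge 0$ for your $\kappa=m'$. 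The paper even flags this in a remark: ``the full strength of the subinvariance relation is needed to prove that $\epsilon\ge 0$.'' Your argument only verifies the singleton cases for $m'$, so the existence of the state $\phi$ with vector $m'$ is unproven; this step needs the additional computation above before the decomposition $\omega=\mu\psi+(1-\mu)\phi\circ q_D$ can be asserted.
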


We observe that the hypothesis \eqref{stronghyp} is stronger than the corresponding hypothesis \eqref{Dhered} in Propositions~\ref{evectors} and \ref{existKMS1-modified}. This extra strength will be important at the end of the proof when we apply the results of \cite[\S6]{aHLRS2} to the graph $\Lambda\backslash D$.

\begin{proof}
Let $\theta$ be a KMS$_1$ state of $(\TC^*(\Lambda),\alpha^r)$ and consider $m^\theta=(\theta(q_v))$.
We  take $x$ to be the common unimodular Perron-Frobenius eigenvector of the matrices $A_{D,i}$, and apply Proposition~\ref{evectors}. We write the vertex matrices in block form with respect to the decomposition $\Lambda^0=(\Lambda^0\backslash D)\cup D$ as
\begin{equation}\label{2block}
A_i=\begin{pmatrix} E_i & B_i\\ 0 & A_{D,i}\end{pmatrix}; 
\end{equation}
notice that here we have absorbed the set $H$ of Proposition~\ref{evectors} into $\Lambda^0\backslash D$. So now we extend the vector $y\in [0,\infty)^F$ of Proposition~\ref{evectors}\eqref{defy2} to a vector $y\in [0,\infty)^{\Lambda^0\backslash D}$ by setting $y_v=0$ if $v\in H$, or equivalently if $v\Lambda D=\emptyset$. Then we set $z=(y,x)$ and $b=\|z\|_1$, and let $\psi$  be the state of Proposition~\ref{existKMS1-modified}. 

Since the dynamics is preferred, Proposition~4.1 of \cite{aHLRS2} with $K=\{i\}$ gives 
\[
A_im^\theta\leq \rho(A_i)m^\theta\quad\text{for $1\leq i\leq k$,}
\]
and the block decomposition implies that
\[
A_{D,i}\big(m^\theta|_{D}\big)\leq \rho(A_i)m^\theta|_{D}\quad\text{for $1\leq i\leq k$.}
\]
Since $\rho(A_{i})=\rho(A_{D,i})$ for all $i$, the subinvariance theorem \cite[Theorem~1.6]{Sen} implies that
\[
A_{D,i}\big(m^\theta|_{D}\big)=\rho(A_{D,i})m^\theta|_{D}=\rho(A_{i})m^\theta|_{D}\quad\text{for $1\leq i\leq k$.}
\]
(The subinvariance theorem requires that $m^\theta|_{D}\not=0$, but our conclusion also holds trivially if $m^\theta|_{D}=0$.) Thus $m^\theta|_{D}$ is a multiple of the common unimodular Perron-Frobenius eigenvector $x$ for the family $\{A_{D,i}:1\leq i\leq k\}$, and we can define  $a\in [0,\infty)$ by  $m^{\theta}|_{D}=ab^{-1}x$. Let $\psi$ be the state of Proposition~\ref{existKMS1-modified}. Then, in particular, for $v\in D$ we have $a\psi(q_v)=\theta(q_v)$. 

If $a=0$, we have $m^\theta_v=\theta(q_v)=0$ for all $v\in D$, and an application of \cite[Lemma~2.2]{aHLRS1} shows that $\theta$ factors through a state of $\TC^*(\Lambda\backslash D)$. So we suppose that $a>0$. Next we want to show that $a\leq 1$, which we do by proving that $\theta(q_v)\geq a\psi(q_v)$ for all $v\in \Lambda^0$. Since  $\theta(q_v)=a\psi(q_v)$ for $v\in D$,  we consider $v\in \Lambda^0\backslash D$. If $v\Lambda D=\emptyset$, then $y_v=0$, and hence $\theta(q_v)\geq 0=a\psi(q_v)$. So we suppose that $v\Lambda D\not=\emptyset$.

We fix $j$,  and work in the coordinate graph $\Lambda_j$ using techniques from the proof of \cite[Theorem~4.3(b)]{aHLRS4}. As there, we consider the set
\[
\QE_j:=\QE_j(D):=\{\mu e\in \Lambda^{\N e_j}\Lambda^{e_j}D:r(e)\notin D\}
\]
of $j$-coloured paths which make a quick exit from $D$. By Lemma~4.4 of \cite{aHLRS4}, the projections $\{t_\lambda t_\lambda^*:\lambda\in v\QE_j\}$ are mutually orthogonal, and hence
\begin{align*}
\theta(q_v)&\geq \sum_{\lambda\in v\QE_j}\theta(t_\lambda t^*_\lambda)\\
&=\sum_{\lambda\in v \QE_j}\rho(A_j)^{-|\lambda|}\theta(q_{s(\lambda)})\\
&=\sum_{w\in D}\sum_{\lambda\in v \QE_jw}\rho(A_j)^{-\vert \lambda \vert} ab^{-1}x_{w}\\
&=\sum_{w\in D}\sum_{m=0}^\infty\sum_{u\in\Lambda^0\backslash D} \rho(A_j)^{-(m+1)}A_j^m(v,u)A_j(u,w) ab^{-1}x_{w}.
\end{align*}
Now we recall the block decomposition of $A_j$, which says that for $u\in \Lambda^0\backslash D$ we have $A^m_j(v,u)=E^m_j(v,u)$, and for $w\in D$ we have $A_j(u,w)=B_j(u,w)$. Thus
\begin{align*}
\theta(q_v)& \ge\sum_{w\in D}
 \rho(A_j)^{-1}\Big(\sum_{u\in\Lambda^0\backslash D}\sum_{m=0}^\infty \rho(A_j)^{-m}E_j^m(v,u)B_j(u,w)ab^{-1}x_w\Big)\\
&=\sum_{w\in D}
 \rho(A_j)^{-1}\Big(\sum_{u\in\Lambda^0\backslash D}\big(1-\rho(A_j)^{-1}E_j\big)^{-1}(v,u)B_j(u,w)ab^{-1}x_w\Big)\notag\\
&=\sum_{w\in D}\big((\rho(A_j)-E_j)^{-1}B_j\big)(v,w)ab^{-1}ßx_w.\notag
\end{align*}
Since $\rho(A_j)=\rho(A_{D,j})$, we can recognise this sum as $ab^{-1}$ times that defining the coordinate $y_v$ of the vector $y$ of Proposition~\ref{evectors}, and hence
\begin{equation*}
\theta(q_v) \ge ay_v =a\psi(q_v)\quad\text{for all $v\in \Lambda^0\backslash D$ such that $v\Lambda D\neq\emptyset$.}
\end{equation*}
We have now shown that $\theta(q_v)\geq a\psi(q_v)$ for all $v\in \Lambda^0$.
We deduce   that $1=\theta(1)\geq a\psi(1)=a$, which is what we wanted to show.

If $a=1$, then the termwise inequality 
\[
1=\sum_{v\in \Lambda^0}\theta(q_v)\geq \sum_{v\in \Lambda^0} a\psi(q_v)=\sum_{v\in \Lambda^0} \psi(q_v)
\]
forces $\theta(q_v)=\psi(q_v)$ for all $v$. Since both are KMS$_1$ states and $r$ has rationally independent coordinates,  
Proposition 3.1(b) of \cite{aHLRS2} implies that $\theta(t_\lambda t_\mu^*)=\psi(t_\lambda t_\mu^*)$ for all $\lambda,\mu\in\Lambda$, and $\theta=\psi$. 

The other possibility is that $0<a<1$.  Then  $\theta(q_v)-a\psi(q_v)\geq 0$ for all $v\in \Lambda^0$. Since $\theta(q_v)=a\psi(q_v)$ for $v\in D$, 
\begin{equation}\label{eq-def-kappa}
\kappa_v:= {(1-a)}^{-1}(m^\theta-am^\psi)_v={(1-a)}^{-1}\big(\theta(q_v)-a\psi(q_v)\big)
\end{equation}
defines a vector $\kappa$ in $[0,\infty)^{\Lambda^0\backslash D}$ with $\|\kappa\|_1=1$. Using the notation of the block decomposition \eqref{2block}, we claim that the vector
\[
\epsilon:=\prod_{i=1}^k\big(1-\rho(A_i)^{-1}E_i\big)\kappa
\]
belongs to $[0,\infty)^{\Lambda^0\backslash D}$.

Since $\theta$ is a KMS$_1$ state, Proposition~4.1 of \cite{aHLRS2} implies that
\begin{equation}\label{subinvi}
\prod_{i=1}^k\big(1-\rho(A_i)^{-1}A_i\big)m^\theta\geq 0.
\end{equation}
Writing $A_i$ in block form \eqref{2block}, we can rewrite \eqref{subinvi} as
\[
\prod_{i=1}^k\begin{pmatrix}
1-\rho(A_i)^{-1}E_i&-\rho(A_i)^{-1}B_i\\
0&1-\rho(A_i)^{-1}A_{D,i}\end{pmatrix}\begin{pmatrix}m^\theta|_{\Lambda\backslash D}\\
m^\theta|_D\end{pmatrix}\geq 0.
\]
Since $m^\theta|_D=ab^{-1}x$ is an eigenvector for $A_{D,k}$ with eigenvalue $\rho(A_{D,k})=\rho(A_k)$, the bottom block is $0$. Expanding the top block gives
\begin{equation}\label{upperblock}
\prod_{i=1}^k\big(1-\rho(A_i)^{-1}E_i\big)\big(m^\theta|_{\Lambda\backslash D}\big)-\prod_{i=1}^{k-1}\big(1-\rho(A_i)^{-1}E_i\big)\rho(A_k)^{-1}B_k(ab^{-1}x)\geq 0.
\end{equation}
Since the hypothesis \eqref{stronghyp} implies that 
\[
\rho(A_i)>\rho(E_i)=\max\{\rho(A_{C,i}):C\in \Cc,\ C\not= D\},
\] 
the matrices $1-\rho(A_i)^{-1}E_i$ are all invertible. Thus we can rearrange the second term in \eqref{upperblock} as
\begin{align*}
\prod_{i=1}^{k-1}\big(1-\rho(A_i)^{-1}&E_i\big)\rho(A_k)^{-1}B_k(ab^{-1}x)\\
&=\prod_{i=1}^k\big(1-\rho(A_i)^{-1}E_i\big)\big(1-\rho(A_k)^{-1}E_k\big)^{-1}\rho(A_k)^{-1}B_k(ab^{-1}x)\\
&=\prod_{i=1}^k\big(1-\rho(A_i)^{-1}E_i\big)ab^{-1}\big(\rho(A_k)1-E_k)^{-1}B_kx.
\end{align*}
Since $\rho(A_k)=\rho(A_{D,k})$, we can recognise $\big(\rho(A_k)1-E_k)^{-1}B_kx$ as the vector $y$. Hence
\begin{align*}
\prod_{i=1}^{k-1}\big(1-\rho(A_i)^{-1}E_i\big)\rho(A_k)^{-1}B_k(ab^{-1}x)
&=\prod_{i=1}^k\big(1-\rho(A_i)^{-1}E_i\big)ab^{-1}y\\
&=\prod_{i=1}^k\big(1-\rho(A_i)^{-1}E_i\big)\big(am^\psi|_{\Lambda\backslash D}\big).
\end{align*}
Combining this with \eqref{upperblock} gives 
\[
\prod_{i=1}^k\big(1-\rho(A_i)^{-1}E_i\big)\big(m^\theta|_{\Lambda\backslash D}-am^\psi|_{\Lambda\backslash D}\big)\geq 0,
\]
and now
\begin{align*}
\epsilon
&=(1-a)^{-1}\prod_{i=1}^k\big(1-\rho(A_i)^{-1}E_i\big)\big(m^\theta|_{\Lambda\backslash D}-am^\psi|_{\Lambda\backslash D}\big)\geq 0,
\end{align*}
and hence $\epsilon$ belongs to $[0,\infty)^{\Lambda^0\backslash D}$,  as claimed.

Since  each $1-\rho(A_i)^{-1}E_i$ is invertible, we  can recover  $\kappa=\prod_{i=1}^k\big(1-\rho(A_i)^{-1}E_i\big)^{-1}\epsilon$, and since $\|\kappa\|_1=1$, it follows from \cite[Theorem~6.1(a)]{aHLRS2} that $\epsilon$ belongs to the simplex $\Sigma_1$ of \cite[Theorem~6.1(c)]{aHLRS2} for the graph $\Lambda\backslash D$. The induced dynamics $\alpha^r$ on $\TC^*(\Lambda\backslash D)$ satisfies $r_i=\ln\rho(A_i)>\ln\rho(E_i)$. Thus  Theorem~6.1 of \cite{aHLRS2} gives a KMS$_1$ state $\phi_\epsilon$ on $\TC^*(\Lambda \backslash D)$ such that $\phi_\epsilon( q_v)=\kappa_v$ for $v\in\Lambda^0\backslash D$. (The graph $\Lambda\backslash D$ could have sources, so we are implicitly using here that we can apply \cite[Theorem~6.1]{aHLRS2} to graphs with sources, as discussed in \S\ref{sources}.) 

Let $q:\TC^*(\Lambda)\to \TC^*(\Lambda\backslash D)$ be the quotient map. Looking back at the definition of $\kappa$ at \eqref{eq-def-kappa}, we see that 
\begin{align*}
\big((1-a)\phi_\epsilon\circ q+a\psi\big)(q_v)&=(1-a)\kappa_v+a\psi(q_v)=\theta(q_v),
\end{align*}
for $v\in \Lambda^0\backslash D$. For $v\in D$, we have 
\[
\big((1-a)\phi_\epsilon\circ q+a\psi\big)(q_v)=a\psi(q_v)=\theta(q_v).
\]
Thus the convex combination $(1-a)\phi_\epsilon\circ q+a\psi$ agrees with $\theta$ on all the vertex projections, and since they are both KMS$_1$ states, another application of \cite[Proposition~3.1]{aHLRS2} shows that they are equal on $\TC^*(\Lambda)$. Thus $\theta$ is a convex combination of $\phi_\epsilon\circ q$ and the state $\psi$ of Proposition~\ref{existKMS1-modified}. This completes the proof of Theorem~\ref{thm:case2-modified}.
\end{proof}

\begin{rmk}
We observe that the full strength of the subinvariance relation~\eqref{subinvi} is needed to prove that $\epsilon\geq 0$. In \cite[Proposition~4.1]{aHLRS2} we proved a similar subinvariance relation for all nonempty subsets $K$ of $\{1, \dots, k\}$.
\end{rmk}
\begin{rmk}
The requirement that $\rho(A_{D,i})=\rho(A_i)>\rho(A_{C,i})$ for all $i$  and $C\neq D$ is crucial at the end of the above proof, because it allows us to recover the vector $\kappa$ from $\epsilon$. We have so far been unable to find an approximation argument like that of Proposition~\ref{existKMS1-modified} which will give us a suitable KMS$_1$ state as a limit of KMS$_\beta$ states with $\beta>1$.
\end{rmk}

Since the results of \cite{aHLRS2} give a complete classification of the KMS$_1$ states of the quotient $\TC^*(\Lambda \backslash D)$ (see \S\ref{sources}), we have the following description of the simplex of KMS$_1$ states on $\TC^*(\Lambda)$. Recall that for large enough $\beta$, Theorem~6.1 of \cite{aHLRS2} describes the simplex of KMS$_\beta$ states in terms of a vector $y^\beta:\Lambda^0\backslash D\to [1,\infty)$ as
\[
\Sigma_\beta:=\{\epsilon\in [0,\infty)^{\Lambda^0\backslash D}:\epsilon\cdot y^\beta=1\}
\]
and gives a specific formula for the KMS$_\beta$ state $\phi_{\epsilon}$ associated to $\epsilon\in \Sigma_\beta$ (see formula (6.1) in \cite{aHLRS2}). The critical value $\beta=1$ is large enough for the preferred dynamics on $\TC^*(\Lambda \backslash D)$. So we have:

\begin{cor}\label{cor-main}
Suppose that $\Lambda$ is a finite $k$-graph without sources or sinks, and $D$ is a nontrivial strongly connected component which is hereditary, for which $\Lambda_D$ is cocordinatewise irreducible, and which has $\rho(A_{D,i})>\rho(A_{C,i})$ for $1\leq i\leq k$ and all other components $C\in \Cc\backslash\{D\}$. Let $q$ be the quotient map of $\TC^*(\Lambda)$ onto $\TC^*(\Lambda\backslash D)$, and let $\psi$ be the state of Proposition~\ref{existKMS1-modified}. Then the map $(s\epsilon, 1-s)\mapsto s(\phi_\epsilon\circ q)+(1-s)\psi$ is an affine homeomorphism of the simplex
\[
\Sigma_1':=\{(s\epsilon,1-s):\epsilon\cdot y^1=1\text{ and }s\in [0,1]\}
\]
onto the simplex of KMS$_1$ states on $(\TC^*(\Lambda),\alpha^r)$.
\end{cor}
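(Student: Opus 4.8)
The plan is to deduce this by packaging Theorem~\ref{thm:case2-modified} together with the classification of the KMS$_1$ states of the quotient $\TC^*(\Lambda\backslash D)$ coming from \cite[Theorem~6.1]{aHLRS2}, and by recognising $\Sigma_1'$ as the cone over $\Sigma_1$ whose apex $(0,1)$ corresponds to the single state $\psi$.

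First I would assemble the two inputs. Since $q$ intertwines $\alpha^r$ with the induced dynamics $\bar\alpha^r$ on $\TC^*(\Lambda\backslash D)$, composing a KMS$_1$ state of $(\TC^*(\Lambda\backslash D),\bar\alpha^r)$ with $q$ gives a KMS$_1$ state of $(\TC^*(\Lambda),\alpha^r)$; as $\psi$ is a KMS$_1$ state by Proposition~\ref{existKMS1-modified} and convex combinations of KMS$_1$ states are KMS$_1$ states, every state of the form $s(\phi\circ q)+(1-s)\psi$ with $s\in[0,1]$ and $\phi$ a KMS$_1$ state of the quotient is a KMS$_1$ state of $\TC^*(\Lambda)$. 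Conversely, Theorem~\ref{thm:case2-modified} says every KMS$_1$ state of $(\TC^*(\Lambda),\alpha^r)$ has this form, so the KMS$_1$ simplex of $\TC^*(\Lambda)$ is exactly the set of such $s(\phi\circ q)+(1-s)\psi$. For the second input, Corollary~\ref{calcrho} gives $\rho(A_i)=\max_{C\in\Cc}\rho(A_{C,i})$, so the hypothesis forces $\rho(A_{D,i})=\rho(A_i)$; and, exactly as in the proof of Theorem~\ref{thm:case2-modified}, the matrix $E_i$ of the block decomposition \eqref{2block} of $A_i$ has $\rho(E_i)=\max_{C\neq D}\rho(A_{C,i})<\rho(A_{D,i})$, so $r_i=\ln\rho(A_i)>\ln\rho(A_{\Lambda\backslash D,i})$ for all $i$. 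Hence $\beta=1$ lies strictly above the critical inverse temperature for $\bar\alpha^r$, and \cite[Theorem~6.1]{aHLRS2}, in the form valid for graphs with sources (\S\ref{sources}, since $\Lambda\backslash D$ may have sources), gives an affine homeomorphism $\epsilon\mapsto\phi_\epsilon$ of $\Sigma_1=\{\epsilon\in[0,\infty)^{\Lambda^0\backslash D}:\epsilon\cdot y^1=1\}$ onto the KMS$_1$ simplex of $(\TC^*(\Lambda\backslash D),\bar\alpha^r)$. Substituting, the KMS$_1$ simplex of $\TC^*(\Lambda)$ is the image of $\Sigma_1'$ under the map $\Phi\colon(s\epsilon,1-s)\mapsto s(\phi_\epsilon\circ q)+(1-s)\psi$ of the statement, and $\Phi$ is well defined because at $s=0$ its value is $\psi$ regardless of $\epsilon$, matching the fact that all such points of $\Sigma_1'$ equal $(0,1)$.

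It then remains to check that $\Phi$ is an affine homeomorphism. A routine computation shows $\Sigma_1'$ is convex: the $t$-weighted combination of $(s\epsilon,1-s)$ and $(s'\epsilon',1-s')$ is $(s''\epsilon'',1-s'')$ with $s''=ts+(1-t)s'$ and, when $s''>0$, $\epsilon''=(s'')^{-1}(ts\epsilon+(1-t)s'\epsilon')\in\Sigma_1$ (and $(0,1)$ when $s''=0$), so $\Sigma_1'$ is the cone over the simplex $\Sigma_1$ and is itself a simplex; feeding this through $\Phi$ and using that $\epsilon\mapsto\phi_\epsilon$ is affine shows $\Phi$ preserves convex combinations. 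For injectivity, note $q(q_v)=0$ for $v\in D$, so $\Phi(s\epsilon,1-s)(q_v)=(1-s)\psi(q_v)$ for $v\in D$, and $\psi(q_v)=b^{-1}z_v=b^{-1}x_v>0$ by Proposition~\ref{existKMS1-modified} since $x$ is strictly positive; thus $\Phi(s\epsilon,1-s)=\Phi(s'\epsilon',1-s')$ forces $s=s'$, after which surjectivity of $q$ and injectivity of $\epsilon\mapsto\phi_\epsilon$ give $\epsilon=\epsilon'$ when $s>0$, while $s=0$ makes the two points of $\Sigma_1'$ equal already. Finally $\Phi$ is weak*-continuous: $\epsilon\mapsto\phi_\epsilon$ and $\phi\mapsto\phi\circ q$ are weak*-continuous and forming convex combinations is jointly continuous, and no continuity is lost at the apex because formula~(6.1) of \cite{aHLRS2} is linear in $\epsilon$, so $s\,\phi_\epsilon$ is obtained by substituting the vector $s\epsilon$ into that formula and depends continuously on $s\epsilon$ through $0$. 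Since $\Sigma_1'$ is compact (a continuous image of $\Sigma_1\times[0,1]$) and the state space is Hausdorff, the continuous affine bijection $\Phi$ is an affine homeomorphism.

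The real work is already done in Theorem~\ref{thm:case2-modified} and \cite[Theorem~6.1]{aHLRS2}; the only points needing care — the main, though modest, obstacle — are organising $\Sigma_1'$ as a cone and verifying that collapsing its apex to $\psi$ is compatible with the affine and topological structure, and quoting the sources-tolerant version of \cite[Theorem~6.1]{aHLRS2} for $\Lambda\backslash D$, whose induced dynamics is generally not the preferred one but is still supercritical at $\beta=1$.
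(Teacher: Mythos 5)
Your proof is correct and follows essentially the same route as the paper, which presents the corollary as an immediate consequence of Theorem~\ref{thm:case2-modified} together with the sources-tolerant version of \cite[Theorem~6.1]{aHLRS2} applied to $\Lambda\backslash D$ (valid since $r_i=\ln\rho(A_i)>\ln\rho(A_{\Lambda\backslash D,i})$); your additional verifications that $\Sigma_1'$ is the cone over $\Sigma_1$, that injectivity follows from $\psi(q_v)>0$ for $v\in D$, and that a continuous affine bijection from a compact simplex is a homeomorphism are exactly the details the paper leaves to the reader. The only caveat, inherited from the paper's own statement, is that you implicitly invoke the hypotheses of Theorem~\ref{thm:case2-modified} (rational independence of the $\ln\rho(A_i)$ and $D$ not forwards hereditary) which are not restated in the corollary.
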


\begin{rmk}
We prove later in  Proposition~\ref{KMSsources} below that the state $\phi_\epsilon$ of $\TC^*(\Lambda \backslash D)$ factors through a state of $C^*(\Lambda \backslash D)$ if and only if $\epsilon$ is supported on the set of ``absolute sources'' in $\Lambda \backslash D$. (See Remark~\ref{rmk-absolute-sources} for the definition of ``absolute source''.) The graph $\Lambda \backslash D$ could certainly have absolute sources, as taking $D=\{x\}$ in Example~\ref{creatingsources} shows. In general,  if $\epsilon$ is supported on an absolute source $v\in \Lambda^0 \backslash D$, then the state $\phi_\epsilon$ of $\TC^*(\Lambda)$ satisfies $\phi_\epsilon(q_v)>0$. Since the sources in $C^*(\Lambda \backslash D)$ belong to the saturation $\Sigma D$ of the hereditary set $D\subset \Lambda^0$, the associated vertex projections $q_v\in \TC^*(\Lambda)$ are killed by the quotient map of $\TC^*(\Lambda)$ onto $C^*(\Lambda\backslash\Sigma D)$. So the only KMS$_1$ state of $\TC^*(\Lambda)$ that factors through a state of $C^*(\Lambda\backslash\Sigma D)$ is the state $\psi$ in the statement of Corollary~\ref{cor-main}.
\end{rmk}

\section{Graphs with one or two components}

Suppose that $\Lambda$ is a finite $k$-graph with no sinks or sources, and as usual write $\Cc$ for the set of nontrivial strongly connected components. In this section we assume further that
\begin{itemize}
\item[(A1)] $\Lambda$ has at most two nontrivial strongly connected components;
\item[(A2)] for every $C\in \Cc$, the graph $\Lambda_C:=C\Lambda C$ is  coordinatewise irreducible.
\end{itemize}
Because the graph is finite and has no sinks, each $\Lambda^nv$ is nonempty, and there exist paths which visit some vertices more than once. So there is always at least one nontrivial strongly connected component.

The point of assumption (A2) is that the strongly connected components are the same as the strongly connected components in each of the coordinate graphs $\Lambda_i=(\Lambda^0,\Lambda^{e_i},r,s)$. In general, this is not necessarily the case:

\begin{example}\label{examplenotcoordirred}
There are $2$-graphs with the following skeleton (drawn with the convention of~\S\ref{hrgraphs}). Since we made the numbers of red and blue loops at $u$ and $v$ the same, there are $2$-graphs with this skeleton, but the results of this section do not apply to such graphs. The set $C=\{u,v\}$ is strongly connected, but the graph does not satisfy (A2) because $A_2$ is reducible.
\[
\begin{tikzpicture}[scale=1.5]
 \node[inner sep=0.5pt, circle] (u) at (0,0) {$u$};
    \node[inner sep=0.5pt, circle] (v) at (2,0) {$v$};
\draw[-latex, blue] (u) edge [out=190, in=250, loop, min distance=20, looseness=2.5] (u);
\draw[-latex, red, dashed] (u) edge [out=110, in=170, loop, min distance=20, looseness=2.5] (u);
\draw[-latex, blue] (v) edge [out=280, in=340, loop, min distance=20, looseness=2.5] (v);
\draw[-latex, red, dashed] (v) edge [out=30, in=90, loop, min distance=20, looseness=2.5] (v);
\draw[-latex, blue] (v) edge [out=195, in=345]   (u);
\draw[-latex, blue] (u) edge [out=15, in=165]   (v);
\node at (-0.55, -0.5) {$n_1$};
\node at (-0.6,0.55) {$n_2$};
\node at (2.65, -0.5) {\color{black} $n_1$};
\node at (2.55, 0.5 ) {\color{black} $n_2$};
\node at (1.05, 0.3) {$p$}; 
\node at (1.05, -0.3) {$q$};
\end{tikzpicture}
\]
\end{example}

\subsection{Graphs with one component}\label{sec1cpt} We now suppose in addition that there is exactly one nontrivial component $C$. Since there are no sinks or sources, every vertex emits and receives edges of all degrees, and then paths of sufficiently large degrees must hit some vertices more than once. So every vertex connects forwards and backwards to $C$, and thus $C=\Lambda^0$. Assumption (A2) therefore implies that $\Lambda$ is coordinatewise irreducible. Theorem~7.2 of \cite{aHLRS2} says that if the numbers $\{\ln\rho(A_i):1\leq i\leq k\}$ are rationally independent, then the preferred dynamics  on $\TC^*(\Lambda)$ admits a unique KMS$_1$ state, which then factors through $C^*(\Lambda)$. (This result is substantially improved in \cite[\S7]{aHLRS3}, but the original one will suffice here.)

We shall be interested in KMS states on quotients of Toeplitz algebras, and although these quotients are themselves Toeplitz algebras, the dynamics on the quotient induced by the preferred dyanmics may not be itself preferred. For a non-preferred dynamics on $\TC^*(\Lambda)$, we can find KMS$_\beta$ states for all $\beta>\beta_c:=\max_i\{r_i^{-1}\ln\rho(A_i)\}$ using \cite[Theorem~6.1]{aHLRS2}. There is always at least one KMS$_{\beta_c}$ state of $\TC^*(\Lambda)$, and if the numbers $\{\ln\rho(A_i):1\leq i\leq k\}$ are rationally independent, this is the only KMS$_{\beta_c}$ state \cite[Theorem~5.1]{aHKR}. However, it typically does not factor through $C^*(\Lambda)$ (see \cite[Proposition~6.1]{aHKR}). 

The KMS states on the Toeplitz algebras of $2$-graphs with a single vertex were explicitly described in \cite[\S7]{aHKR}. In that case the vertex matrices are $(m_1)$ and $(m_2)$, and the rational independence of $\ln m_1$ and $\ln m_2$ is automatic unless $m_1$ and $m_2$  have the form described in Proposition~\ref{ricond}.

\subsection{Graphs with two components}\label{sec:setup} We now suppose that $\Lambda$ has exactly two nontrivial components $C$ and $D$. We assume further (that is, in addition to (A1) and (A2) above) that
\begin{itemize}
\item[(A3)] $C\Lambda v\not=\emptyset$ for all $v\in \Lambda^0$.
\end{itemize}
This assumption has some immediate consequences. First, $C$ has to be forwards hereditary (that is, $v\Lambda C\not=\emptyset$ implies $v\in C$) and $\Lambda^0\backslash C$ is hereditary. Since $C\Lambda D\not=\emptyset$, the existence of two distinct components implies that $D\Lambda C=\emptyset$. There are some less immediate consequences too.

\begin{lem}\label{lempaths}
Suppose that $1\leq j\leq k$. If $w\in \Lambda^0\backslash(C\cup D)$, then there are paths $\lambda$ in $C\Lambda^{\N e_j}w$ and $\mu$ in $w\Lambda^{\N e_j}D$. 
\end{lem}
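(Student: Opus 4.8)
The plan is to establish the existence of the two paths separately, and by symmetry it suffices to explain the construction of $\lambda\in C\Lambda^{\N e_j}w$; the path $\mu\in w\Lambda^{\N e_j}D$ is obtained by a dual argument using the structure of $D$. Fix $w\in\Lambda^0\backslash(C\cup D)$. By assumption (A3), there is a path from $C$ to $w$, but a priori it need not be monochromatic of colour $j$. First I would use the coordinatewise irreducibility of $\Lambda_C$ (assumption (A2)): since $A_{C,j}$ is irreducible, the coordinate graph $(\Lambda_C)_j=(C,\Lambda_C^{e_j},r,s)$ is strongly connected, so from any chosen vertex of $C$ we can reach any other vertex of $C$ along a $j$-coloured path staying inside $C$. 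This reduces the task to finding a single $j$-coloured path from \emph{some} vertex of $C$ out to $w$.

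The key mechanism is a "colour-changing" argument via the factorisation property. Take any path $\nu\in C\Lambda w$, which exists by (A3). Using the factorisation property we may rewrite $\nu$ with its $j$-coloured edges pulled as far forward (toward the range) as possible; concretely, $\nu=\nu'\nu''$ where $d(\nu')\in\N e_j$ and $\nu''$ begins with an edge of some colour $i\neq j$ (or $\nu''$ is trivial, in which case we are essentially done). Now I would argue that $r(\nu'')$ — which is a vertex reachable from $C$ — in fact lies in $C$: if not, then $r(\nu'')\in\Lambda^0\backslash C$, and we can iterate. The cleaner route is to argue directly on vertices: let $v_0=s(\nu')=r(\nu'')$; since $\nu''\in v_0\Lambda w$ with $v_0$ connected forwards from $C$ and $C$ forwards hereditary, we have $v_0\in C$. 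Hmm — that only shows the split point is in $C$, not that the tail $\nu''$ is monochromatic. So instead I would proceed inductively on $|d(\nu'')|$: if $\nu''$ is nontrivial with first edge $e$ of colour $i\neq j$, then $r(e)\in C$ (as it is reachable from $C$ and $C$ is forwards hereditary... wait, $r(e)=v_0\in C$, but $s(e)$ need not be). The honest inductive step is: given $v\in C$ and a path $\eta\in v\Lambda w$, I want a $j$-coloured path $\lambda\in C\Lambda^{\N e_j}w$; if $d(\eta)\in\N e_j$ we are done by composing with a $j$-path inside $C$; otherwise write $d(\eta)=n$ with $n_i>0$ for some $i\neq j$, factor $\eta=\eta_1\eta_2$ with $d(\eta_1)=e_i$, so $\eta_1\in v\Lambda^{e_i}v'$ with $v'=s(\eta_1)$. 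Since $v\in C$ and $\Lambda_C$ is coordinatewise irreducible (so $v$ emits an $e_j$-edge inside $C$), and using the factorisation property on the $v\Lambda^{e_i+e_j}$-paths, I can replace the initial $e_i$-edge by an $e_j$-edge: there is $f\in v\Lambda^{e_j}C$ (since $v\in C$ and $A_{C,j}$ irreducible, $v$ emits a $j$-edge within $C$), and then using that $\Lambda_C$ is a $k$-graph one shows $s(f)$ still connects to $w$ with a path of strictly smaller "$i$-content". Iterating on $|n|-n_j$ terminates and yields a $j$-coloured path from $C$ to $w$; prepending a $j$-path inside $C$ makes its range an arbitrary (hence every) vertex of $C$.

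The dual statement $w\Lambda^{\N e_j}D\neq\emptyset$ follows by the mirror argument: by (A3) combined with $w\notin D$ and the finiteness/no-sinks hypothesis, $w$ connects forwards to $D$ (this needs a short argument: $w\Lambda D\neq\emptyset$ because $\Lambda^0\backslash C$ is hereditary and contains $w$, and the only components available forward of $w$ inside $\Lambda^0\backslash C$ are $D$ and trivial ones; since the graph is finite with no sinks, following a long $j$-coloured path from $w$ forces a return path, which must lie in $D$ as $D$ is the unique nontrivial component meeting $\Lambda^0\backslash C$ that is hereditary). Then run the same colour-changing induction, now using that $A_{D,j}$ is irreducible so that $D$ absorbs $j$-coloured tails.

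I expect the main obstacle to be the colour-changing step — turning an arbitrarily-coloured path between $C$ and $w$ into a monochromatic $e_j$-path — since the factorisation property lets one \emph{permute} the colours along a fixed path but not obviously \emph{eliminate} colours. The resolution is to exploit that the endpoint lies in a coordinatewise-irreducible component: irreducibility of $A_{C,j}$ (resp. $A_{D,j}$) guarantees a reservoir of $j$-coloured edges within the component that can be spliced in to absorb the unwanted colours one edge at a time, with the factorisation property ensuring consistency. Making the termination of this absorption process precise (a well-chosen monovariant such as $|d(\eta)|-d(\eta)_j$ strictly decreasing) is the technical heart of the argument.
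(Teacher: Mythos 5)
There is a genuine gap in your construction of $\lambda\in C\Lambda^{\N e_j}w$. Your plan is to take an arbitrary path in $C\Lambda w$ and eliminate its non-$j$-coloured edges by splicing in $j$-edges from inside $C$, with $|d(\eta)|-d(\eta)_j$ as a strictly decreasing monovariant. But the factorisation property preserves degree: every refactorisation of a path of degree $n$ again has degree $n$, and precomposing with extra $j$-edges from $C$ only increases $|d(\eta)|$ while leaving the non-$j$ content $|d(\eta)|-d(\eta)_j$ unchanged. So no step of the kind you describe can decrease your monovariant; the unwanted edges are merely permuted toward the $C$-end of the path, never absorbed. You flag this difficulty yourself (``the factorisation property lets one permute the colours \dots but not obviously eliminate colours'') but then assert without justification that irreducibility of $A_{C,j}$ resolves it. It does not: the $j$-edges inside $C$ join vertices of $C$ to one another and provide no mechanism for replacing an $e_i$-edge that lies strictly between $w$ and $C$.

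The fix is the argument you already give, parenthetically, for the second path: forget recolouring and work with monochromatic paths from the outset. Since $\Lambda$ has no sinks, there is $\lambda\in\Lambda^{Ne_j}w$ with $N>|\Lambda^0|$; it visits some vertex twice, hence contains a return path, whose vertices lie in a nontrivial strongly connected component, i.e.\ in $C$ or in $D$. Likewise, no sources gives $\mu\in w\Lambda^{Ne_j}$ containing a return path in $C$ or $D$. The two return paths cannot lie in the same component (else $w$ would be strongly connected to it and belong to it), and they cannot lie in $D$ and $C$ respectively (else $D\Lambda w\neq\emptyset$ and $w\Lambda C\neq\emptyset$ would give $D\Lambda C\neq\emptyset$). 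So the return path in $\lambda$ lies in $C$ and the one in $\mu$ lies in $D$, and truncating $\lambda$ and $\mu$ at the relevant vertices yields the required monochromatic paths. This is exactly the paper's proof; your treatment of $\mu$ is already essentially this pigeonhole argument, and it applies verbatim to $\lambda$.
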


\begin{proof}
Take $N>|\Lambda^0|$. Since $\Lambda$ has no sinks, $\Lambda^{Ne_j}w$ is nonempty, and there exists $\lambda\in \Lambda^{Ne_j}w$. Since $\lambda$ passes through $N+1$ vertices, it must pass through at least one several times. Thus $\lambda$ contains a return path. Similarly, since $\Lambda$ has no sources, there exists $\mu\in w\Lambda^{Ne_j}$, and it too contains a return path. Each of the return paths in $\lambda$ and $\mu$ must lie in one of the strongly connected components $\Lambda_{C}$ or $\Lambda_{D}$. They cannot lie in the same one, because then $w$ would be strongly connected to that component, and would belong to it. If the return path in $\lambda$ lies in $\Lambda_{D}$ and the one on $\mu$ lies in $\Lambda_{C}$, then we have a path in $\Lambda_{D}\Lambda w\Lambda\Lambda_{C}$, which contradicts $D\Lambda C=\emptyset$. So the return path in $\lambda$ must lie in $\Lambda_{C}$, and the one in $\mu$ must lie in $\Lambda_{D}$. But then $\lambda$ ends in $C$ and $\mu$ begins in $D$.
\end{proof}

\begin{cor}
 The subset $D$ is hereditary.
\end{cor}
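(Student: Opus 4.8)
The plan is to prove directly that $D$ is closed under the order relation $\leq$: given $v\in D$ and $v\leq w$ (i.e.\ $v\Lambda w\neq\emptyset$), I want to conclude $w\in D$.

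First I would rule out $w\in C$. Since $\Lambda^0\backslash C$ is hereditary (one of the immediate consequences of (A3) recorded above) and $v\in D\subseteq \Lambda^0\backslash C$ with $v\leq w$, we get $w\in \Lambda^0\backslash C$. (One could also argue directly: if $w\in C$, then any $\eta\in v\Lambda w$ shows $v\Lambda C\neq\emptyset$, and forwards-hereditarity of $C$ would force $v\in C$, contradicting $C\cap D=\emptyset$.)

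Next, suppose towards a contradiction that $w\notin D$ as well, so that $w\in \Lambda^0\backslash(C\cup D)$. Then Lemma~\ref{lempaths} (applied with $j=1$, say) supplies a path $\mu\in w\Lambda^{\N e_1}D$; writing $d:=s(\mu)\in D$, the path $\mu$ witnesses $w\Lambda d\neq\emptyset$, so $w\leq d$. Since $D$ is a nontrivial strongly connected component, $d\sim v$, and in particular $d\leq v$. Because paths compose, $\leq$ is transitive, so $w\leq d\leq v$ gives $w\leq v$; combined with the hypothesis $v\leq w$ this yields $v\sim w$. Hence $w$ lies in the strongly connected component of $v$, which is $D$ — contradicting $w\notin D$. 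Therefore $w\in D$, and $D$ is hereditary.

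The only substantive input is Lemma~\ref{lempaths}, which furnishes the ``return trip'' from an intermediate vertex back into $D$; everything else is bookkeeping with the relation $\leq$ and the fact that $D$ is an equivalence class, so I do not anticipate any real obstacle.
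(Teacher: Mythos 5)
Your proof is correct and takes the route the paper intends: the corollary is exactly the second half of Lemma~\ref{lempaths} combined with the observations already recorded after (A3), namely that $w\in C$ is impossible because $D\Lambda C=\emptyset$ (equivalently, $\Lambda^0\backslash C$ is hereditary), while $w\in\Lambda^0\backslash(C\cup D)$ is impossible because the path $\mu\in w\Lambda^{\N e_1}D$ together with $v\leq w$ and the strong connectivity of $D$ forces $v\sim w$. No gaps; the bookkeeping with $\leq$ and transitivity is all sound.
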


\begin{cor}\label{allcols}
If $C\cup D$ is a proper subset of $\Lambda^0$, then there are paths of all colours from  $D$ to $C$.
\end{cor}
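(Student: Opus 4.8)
The plan is to deduce this directly from Lemma~\ref{lempaths}. Since $C\cup D$ is a proper subset of $\Lambda^0$, I would first fix a vertex $w\in\Lambda^0\backslash(C\cup D)$; such a $w$ lies in neither nontrivial component, so Lemma~\ref{lempaths} is available at $w$.

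Then, for each colour $j\in\{1,\dots,k\}$, Lemma~\ref{lempaths} produces a path $\lambda\in C\Lambda^{\N e_j}w$ and a path $\mu\in w\Lambda^{\N e_j}D$. Because $s(\lambda)=w=r(\mu)$, the composite $\lambda\mu$ is defined in $\Lambda$ and satisfies $r(\lambda\mu)\in C$, $s(\lambda\mu)\in D$, and $d(\lambda\mu)=d(\lambda)+d(\mu)\in\N e_j$; moreover $d(\lambda\mu)\ne 0$, since otherwise $\lambda\mu$ would be a vertex and its source $s(\mu)\in D$ would lie in $C$, contradicting $C\ne D$. Hence $\lambda\mu$ is a $j$-coloured path from $D$ to $C$. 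As $j$ ranges over $\{1,\dots,k\}$ this yields paths of all colours from $D$ to $C$.

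The argument has no real obstacle once Lemma~\ref{lempaths} is in hand: the only points needing care are that $\lambda$ and $\mu$ are composable at the common vertex $w$, and that their composite has strictly positive degree in the $j$th coordinate so that it genuinely is a $j$-coloured path---both of which are immediate from $w\notin C\cup D$. (One could avoid naming $w$ by writing $C\Lambda^{\N e_j}D\supseteq \bigl(C\Lambda^{\N e_j}w\bigr)\bigl(w\Lambda^{\N e_j}D\bigr)$ and invoking Lemma~\ref{lempaths}, but introducing $w$ keeps the bookkeeping transparent.)
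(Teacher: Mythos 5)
Your argument is correct and is exactly the intended deduction: the paper states Corollary~\ref{allcols} immediately after Lemma~\ref{lempaths} precisely because one fixes $w\in\Lambda^0\backslash(C\cup D)$ and, for each colour $j$, composes the paths $\lambda\in C\Lambda^{\N e_j}w$ and $\mu\in w\Lambda^{\N e_j}D$ supplied by the lemma. Your checks that the composite is defined at $w$ and has nonzero degree are the right (and only) points of care.
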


\begin{rmk}
It is important in Corollary~\ref{allcols} that there is a vertex $w\in \Lambda^0\backslash (C\cup D)$ to which we can apply Lemma~\ref{lempaths}. If $C\cup D=\Lambda^0$, then it is possible that there are only edges of one colour from $D=\{v\}$ to $C=\{u\}$. (The graphs in Example~\ref{examplenotcoordirred} with $p=0$, for example.)
\end{rmk}

We now summarise Proposition~\ref{utstructure} as it applies to our two component graphs.

\begin{lem}\label{lem:matrix_decom}
Suppose we have $\Lambda$ as above and $C\Lambda v\not=\emptyset$ for all $v\in \Lambda^0$. Then we can order the vertex set $\Lambda^0$ so that every vertex matrix  has the block form
\[
A_j=\begin{pmatrix}
A_{C,j}&\star&\star\\
0&B_j&\star\\
0&0&A_{D,j}
\end{pmatrix}
\]
with each $B_j$ strictly upper triangular. In particular, we have
\begin{equation*}
\rho(A_j)=\max\{\rho(A_{C,j}),\rho(A_{D,j})\}.
\end{equation*}
\end{lem}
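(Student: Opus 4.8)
The plan is to read off Lemma~\ref{lem:matrix_decom} directly from Proposition~\ref{utstructure}, using the structural facts about two-component graphs established above. Under assumptions (A1)--(A3), the vertex set $\Lambda^0$ has exactly two nontrivial strongly connected components $C$ and $D$; we have seen that (A3) forces $C$ to be forwards hereditary (hence $\Lambda^0\backslash C$ is hereditary), that $D\Lambda C=\emptyset$, and (Corollary after Lemma~\ref{lempaths}) that $D$ is hereditary. So $\Cc=\{C,D\}$, and since each $\Lambda_C$ and $\Lambda_D$ is coordinatewise irreducible by (A2), the graph $\Lambda$ satisfies the hypotheses of Proposition~\ref{utstructure}.

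First I would invoke Proposition~\ref{utstructure} to obtain an ordering of $\Lambda^0$ in which every $A_j$ is block upper triangular, with diagonal blocks being the vertex matrices $\{A_{C,j}:C\in\Cc\}$ of the components together with some strictly upper triangular blocks coming from the $V_p$-pieces of that construction. Next I would identify which diagonal blocks occur and in what order. Since $C$ is forwards hereditary, the construction in Proposition~\ref{utstructure} lists (a union of) forwards hereditary components first; the only forwards hereditary nontrivial component is $C$ (if $D$ were forwards hereditary then $D\Lambda C=\emptyset$ would make $D$ disjoint from $C$, but (A3) gives $C\Lambda D\neq\emptyset$, a contradiction). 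So $C_1=C$, and the block $A_{C_1,j}$ is just $A_{C,j}$. Then one runs the construction on the hereditary remainder $R_1=\Lambda^0\backslash(C\cup V_1)$: the only remaining nontrivial component is $D$, which is hereditary, so on a second pass $C_2=D$ gets listed, and everything else ends up in strictly-upper-triangular $B_j$-type blocks (vertices in $V_1$, then a trivial-or-empty remainder, then $V_2$). Collapsing all these strictly upper triangular pieces into a single middle block $B_j$ indexed by $\Lambda^0\backslash(C\cup D)$ gives the claimed $3\times 3$ form, with $B_j$ strictly upper triangular because it is block-built from strictly upper triangular blocks. The zero blocks below the diagonal are exactly the statement that $C$ is forwards hereditary (kills the first column below $A_{C,j}$) and that $D\cup(\Lambda^0\backslash(C\cup D))$ — equivalently $\Lambda^0\backslash C$ — is hereditary, together with $D$ hereditary (kills the entries mapping $D$ out of $D$).

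Finally, the spectral radius formula is immediate: the spectrum of a block upper triangular matrix is the union of the spectra of its diagonal blocks, so $\rho(A_j)=\max\{\rho(A_{C,j}),\rho(B_j),\rho(A_{D,j})\}$, and $\rho(B_j)=0$ since $B_j$ is strictly upper triangular (nilpotent). Hence $\rho(A_j)=\max\{\rho(A_{C,j}),\rho(A_{D,j})\}$; alternatively this is just Corollary~\ref{calcrho} specialized to $\Cc=\{C,D\}$. I do not expect any real obstacle here: the lemma is essentially a bookkeeping restatement of Proposition~\ref{utstructure}. The only point requiring a line of care is checking that the component ordering produced by that proposition really does put $C$ strictly before $D$ and sweeps all non-component vertices into strictly upper triangular blocks — but this follows from the ``forwards hereditary components first'' structure of the proof of Proposition~\ref{utstructure} together with the facts, recorded above, that $C$ is forwards hereditary and $D$ is hereditary.
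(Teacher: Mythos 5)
Your proof is correct and follows exactly the route the paper intends: the lemma is stated there as a direct summary of Proposition~\ref{utstructure} applied to the two-component situation, with $C$ the unique forwards hereditary component listed first, $D$ the hereditary remainder $R_1$, and the in-between vertices forming the strictly upper triangular (hence nilpotent) middle block, so that the spectral radius formula follows from block upper triangularity. No gaps.
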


\subsection{KMS states for graphs with two components}\label{KMS2cpts}

We consider a finite graph $\Lambda$ with no sinks or sources satisfying all the assumptions of \S\ref{sec:setup}, and we use the same notation.  This gives two strongly connected components $C$ and $D$ such that 
\begin{itemize}
\item $C$ is  forwards hereditary and  $D$ is hereditary,
\item $\Lambda_C$ and $\Lambda_D$ are coordinate-wise irreducible, and 
\item the hereditary closure of $C$ is $\Lambda^0$.
\end{itemize}
We take $r_i=\ln\rho(A_i)$, so that $\alpha^r$ is the preferred dynamics on $\TC^*(\Lambda)$.
If $\beta>1$, then $\beta r_i>\ln\rho(A_i)$ for all $i$, and Theorem~6.1 of \cite{aHLRS2} gives  a $(|\Lambda^0|-1)$-dimensional simplex of KMS$_\beta$ states on $(\TC^*(\Lambda),\alpha^r)$. 

First consider $\Lambda$ such that $C$ is $j$-critical for some $j$.  Then Proposition~\ref{boundbeta}  implies that every KMS$_\beta$ state of $(\TC^*(\Lambda),\alpha^r)$ has $\beta\geq 1$.   So in this situation, it remains to consider $\beta=1$. We want to apply Proposition~\ref{prop:Cminimal-modified}, but  the set $\{w\in\Lambda^0: C\Lambda^{\N e_j} w\neq\emptyset\}$ appearing there may not be all of $\Lambda^0$ (see for example, the $2$-graph on page~\pageref{graph-page-ref}). 
But if, for example, $C\cup D$ is a proper subset of $\Lambda^0$, then
 Corollary~\ref{allcols} implies there are paths of all colours from $D$ to $C$, and then $\{w\in\Lambda^0: C\Lambda^{\N e_j} w\neq\emptyset\}=\Lambda^0$. So we also assume that 
 \begin{itemize}
\item $\{w\in\Lambda^0: C\Lambda^{\N e_j} w\neq\emptyset\}=\Lambda^0$.
\end{itemize}
We let  $K_C=\{i:\rho(A_i)=\rho(A_{C,i})\}$ and observe it is nonempty because $C$ is $j$-critical.
Now by Proposition~\ref{prop:Cminimal-modified} every KMS$_1$ state of $(\TC^*(\Lambda),\alpha^r)$ factors through $(\TC^*(\Lambda_C),\alpha^r)$, and also through  the quotient by the ideal $I_{K_C}$ generated by the gap projections
\[
q_v-\sum_{e\in v\Lambda_C^{e_i}} t_e t^*_e\quad\text{ for $i\in K_C$.}
\]
If  $K_C=\{1,\dots,k\}$, this quotient is the graph algebra $C^*(\Lambda_C)$  by Corollary~\ref{corpreferred-modified}.
Proposition~\ref{prop:Cminimal-modified} also says that if the numbers $\{\ln\rho(A_i):1\leq i\leq k\}$ are rationally independent, then there is exactly one such state on $(\TC^*(\Lambda),\alpha^r)$. 

Second, we consider $\Lambda$ where $C$ is not $i$-critical for any $i$, that is, $K_C=\emptyset$. Then Lemma~\ref{lem:matrix_decom} implies that
\[
\rho(A_i)=\max\{\rho(A_{C,i}),\rho(A_{D,i})\}=\rho(A_{D,i})>\rho(A_{C,i})\quad\text{for all $i$.}
\]
By Theorem~\ref{thm:case2-modified}, the KMS$_1$ states of $(\TC^*(\Lambda),\alpha^r)$ are convex combinations of the state $\psi$ of Proposition~\ref{existKMS1-modified} and a state of the form $\phi\circ q_D$ for some KMS$_1$ state $\phi$ of $\TC^*(\Lambda\backslash D)$. Since 
\[
r_i=\ln\rho(A_i)>\ln\rho(A_{C,i})=\ln\rho(A_{\Lambda\backslash D,i})\quad\text{for all $i$},
\]
Theorem~6.1 of \cite{aHLRS2} gives an explicit description $\{\phi_\epsilon:\epsilon\in \Sigma_1\}$ of a $(|\Lambda^0|-|D|-1)$-dimensional simplex of KMS$_1$ states of $\TC^*(\Lambda\backslash D)$. (Notice that, as Example~\ref{creatingsources} illustrates, the graph $\Lambda\backslash D$ may have sources, so we are using here that we can apply \cite[Theorem~6.1]{aHLRS2} to graphs with sources, as discussed in \S\ref{sources}.) Thus the simplex of KMS$_1$ states of $\TC^*(\Lambda)$ has dimension $|\Lambda^0\backslash D|$. By Proposition~\ref{KMSsources} the states of the form $\phi_\epsilon\circ q_D$ factor through states of $C^*(\Lambda\backslash D)$ if and only if  $\epsilon$ is supported on the ``absolute sources''  in $\Lambda\backslash D$, which under our hypotheses all lie outside $C$.

When $K_C$ is empty, the system $(\TC^*(\Lambda),\alpha^r)$ has a further phase transition at
\begin{equation}\label{betac}
\beta_c:=\max_i\{r_i^{-1}\ln\rho(A_{C,i}):1\leq i\leq k\}.
\end{equation}
For $\beta_c<\beta<1$, \cite[Theorem~6.1]{aHLRS2} gives a $(|\Lambda^0|-|D|-1)$-dimensional simplex of KMS$_\beta$ states on $(\TC^*(\Lambda\backslash D),\alpha^r)$, and composing with $q_D$  gives a simplex of KMS$_\beta$ states on $(\TC^*(\Lambda),\alpha^r)$. 

\begin{rmk}
Suppose that $\Lambda^0=C\cup D$ (plus all the assumptions used above). Then $\Lambda_{\Lambda\backslash D}=\Lambda_C$. 
At this point we have a complete analysis of the KMS states of $(\TC^*(\Lambda),\alpha^r)$ for $\beta>\beta_c$ because we know all the KMS states of the coordinatewise irreducible graph $\Lambda_C$ from \cite{aHKR}. 

However, when $\Lambda^0\neq C\cup D$, there are vertices which lie between $D$ and $C$, and then $\Lambda\backslash D$ has sources (see Example~\ref{creatingsources}), and we need to work a little harder to  deal with this.
\end{rmk}

To see what happens at the (second) critical inverse temperature $\beta_c$ of \eqref{betac}, we need a lemma: notice that we want to apply this lemma for a dynamics that is \emph{not} the preferred dynamics.

\begin{lem}\label{lem-2ndcritical}
Suppose that $\Gamma$ is a finite $k$-graph with a single nontrivial strongly connected component $C$, that $\Gamma$ has no sinks,  and that $\Gamma_C$ is coordinatewise irreducible with the numbers $\{\rho(A_{C,i}):1\leq i\leq k\}$ rationally independent. Take $r\in (0,\infty)^k$, and $\beta_c$ as in \eqref{betac}. Then $(\TC^*(\Gamma),\alpha^r)$ has a unique KMS$_{\beta_c}$ state $\phi$. It satisfies $\phi(q_v)=0$ for all $v\in \Gamma^0\backslash C$, and factors through a state of $C^*(\Gamma_C)$.
\end{lem}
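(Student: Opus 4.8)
The plan is to reduce everything to the coordinatewise irreducible graph $\Gamma_C$, where the single-component results of \cite{aHKR} apply. First I would check that $H:=\Gamma^0\backslash C$ is hereditary: if some $v\notin C$ reached a vertex $w\in C$, then, since $\Gamma$ has no sinks, there would be an arbitrarily long single-coloured path ending at $v$, and as in the proof of Proposition~\ref{utstructure} it must traverse a return path; that return path lies in a nontrivial component, hence in $C$, so $C$ also reaches $v$ and $v\in C$, a contradiction. By Proposition~\ref{modhered} there is then a $\gamma$-equivariant quotient map $q_C:\TC^*(\Gamma)\to\TC^*(\Gamma\backslash H)=\TC^*(\Gamma_C)$ with kernel $I_H=\clsp\{t_\lambda t_\mu^*:s(\lambda)=s(\mu)\in H\}$. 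The block-triangular form of the vertex matrices (Proposition~\ref{utstructure}, listing $C$ first) shows $\rho(A_i)=\rho(A_{C,i})$, so $\beta_c$ is the critical inverse temperature of $\alpha^r$ both on $\TC^*(\Gamma)$ and on $\TC^*(\Gamma_C)$; fix $j$ with $r_j^{-1}\ln\rho(A_{C,j})=\beta_c$.

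Next I would show that every KMS$_{\beta_c}$ state $\phi$ of $(\TC^*(\Gamma),\alpha^r)$ has $m^\phi=(\phi(q_v))$ supported on $C$. Applying \cite[Proposition~4.1(a)]{aHLRS2} with $K=\{j\}$ and reading off the rows indexed by $C$ from the block form gives $A_{C,j}(m^\phi|_C)\le\rho(A_{C,j})m^\phi|_C$, so the subinvariance theorem \cite[Theorem~1.6]{Sen} for the irreducible matrix $A_{C,j}$ forces $A_{C,j}(m^\phi|_C)=\rho(A_{C,j})m^\phi|_C$ (trivially if $m^\phi|_C=0$). Now fix $w\in H$. Since $\Gamma$ has no sinks there is a $j$-coloured path into $w$ of length exceeding $|\Gamma^0|$; it traverses a $j$-coloured return path, which lies in $C$ (the only nontrivial component, and $A_{C,j}$ is irreducible), so there are $c\in C$ and $\ell\ge1$ with $A_j^\ell(c,w)>0$. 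Feeding the Cuntz--Krieger inequality (T4) at $c$ into the KMS condition and using $e^{-\beta_c r_j\ell}=\rho(A_{C,j})^{-\ell}$, the relation $A_{C,j}^\ell(m^\phi|_C)=\rho(A_{C,j})^\ell m^\phi|_C$, and $A_j^\ell(c,u)=A_{C,j}^\ell(c,u)$ for $u\in C$, one obtains $m^\phi_c\ge m^\phi_c+\rho(A_{C,j})^{-\ell}A_j^\ell(c,w)m^\phi_w$, which forces $m^\phi_w=0$. Hence $m^\phi$ is supported on $C$; being a nonnegative $\rho(A_{C,j})$-eigenvector of the irreducible matrix $A_{C,j}$, it is a multiple of the common unimodular Perron--Frobenius eigenvector of the $A_{C,i}$, and $\|m^\phi\|_1=1$ pins it down.

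To conclude, each $q_w$ with $w\in H$ is fixed by $\alpha^r$ and the spanning elements $t_\mu t_\nu^*$ are analytic, so $\phi(q_w)=0$ for all $w\in H$ implies, by \cite[Lemma~2.2]{aHLRS1}, that $\phi$ vanishes on $I_H=\ker q_C$; thus $\phi=\bar\phi\circ q_C$ for a state $\bar\phi$ of $\TC^*(\Gamma_C)$, which is a KMS$_{\beta_c}$ state for the induced dynamics because $q_C$ is $\gamma$-equivariant. Since $\Gamma_C$ is coordinatewise irreducible with $\{\ln\rho(A_{C,i})\}$ rationally independent and $\beta_c$ its critical inverse temperature, \cite[Theorem~5.1]{aHKR} shows $(\TC^*(\Gamma_C),\alpha^r)$ has exactly one KMS$_{\beta_c}$ state; this gives both existence (take $\phi=\bar\phi\circ q_C$) and uniqueness of the KMS$_{\beta_c}$ state of $(\TC^*(\Gamma),\alpha^r)$, together with $\phi(q_v)=0$ for $v\in\Gamma^0\backslash C$ from the previous step. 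Finally $\bar\phi$, hence $\phi$, factors through a state of $C^*(\Gamma_C)$, as recorded for single-component graphs in \S\ref{sec1cpt}.

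The main obstacle is the middle step: producing, for \emph{every} $w\notin C$, a path from $C$ to $w$ in the \emph{single critical colour} $j$, and then balancing the (T4)/KMS inequality exactly enough that the Perron--Frobenius relation for $A_{C,j}$ cancels the diagonal term and leaves only a forced sign condition on $m^\phi_w$. Both hypotheses---that $\Gamma$ has no sinks and that $\Gamma_C$ is coordinatewise irreducible---are used essentially here; without them $\phi$ can carry mass outside $C$, and the uniqueness fails.
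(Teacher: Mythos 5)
Your main argument is correct, but it takes a genuinely different route from the paper's. The paper proves uniqueness by embedding the Toeplitz algebra of the critical-colour coordinate graph $\Gamma_j$ into $\TC^*(\Gamma)$ via \cite[Theorem~4.1]{FR}, quoting the $1$-graph argument behind \cite[Corollary~6.1(b)]{aHLRS1} to see that the restriction of any KMS$_{\beta_c}$ state to $\TC^*(\Gamma_j)$ is unique and vanishes on $q_v$ for $v\notin C$, and then using rational independence and \cite[Proposition~3.1(b)]{aHLRS2} to upgrade agreement on vertex projections to equality of states. You instead rerun the subinvariance/Perron--Frobenius argument of Theorem~\ref{crit} at inverse temperature $\beta_c$, using $e^{\beta_c r_j}=\rho(A_{C,j})$, to show that every KMS$_{\beta_c}$ state kills $q_w$ for $w\notin C$, and then pass to $\TC^*(\Gamma_C)$ and invoke \cite[Theorem~5.1]{aHKR}; this reduces the rank-$k$ problem by shrinking the vertex set rather than by passing to a $1$-graph, and it handles existence more explicitly than the paper does. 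Two small repairs: several of your paths point the wrong way (``no sinks'' gives arbitrarily long paths with \emph{source} $w$, and these are what produce $c\in C$ and $\ell$ with $A_j^\ell(c,w)>0$; likewise hereditariness of $\Gamma^0\backslash C$ means there are no paths with source in $C$ and range outside $C$), and Proposition~\ref{utstructure} cannot be cited verbatim since $\Gamma$ may have sources --- but the block form and $\rho(A_i)=\rho(A_{C,i})$ follow directly because $\Gamma^0\backslash C$ is hereditary and carries no cycles, so $A_{\Gamma^0\backslash C,i}$ is nilpotent.

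The genuine gap is your last clause. The appeal to \S\ref{sec1cpt} does not support it: what \S\ref{sec1cpt} records for a \emph{non-preferred} dynamics is precisely that the unique critical KMS state typically does \emph{not} factor through the graph algebra. Here the induced dynamics on $\TC^*(\Gamma_C)$ has $\beta_c r_i\geq\ln\rho(A_{C,i})$ with equality only in the critical coordinates, and for a non-critical $i$ the KMS condition forces $\bar\phi\bigl(q_v-\sum_{e\in v\Gamma_C^{e_i}}t_et_e^*\bigr)=x_v\bigl(1-e^{-\beta_c r_i}\rho(A_{C,i})\bigr)>0$, so $\bar\phi$ is strictly positive on a generator of the kernel of $\TC^*(\Gamma_C)\to C^*(\Gamma_C)$ and cannot factor through it. The paper's own Example~\ref{creatingsources} (the graph of Figure~\ref{sourceex2}, which satisfies the hypotheses of the lemma with $C=\{u\}$ and $r=(\ln 8,\ln 12)$) exhibits exactly this: there only the red coordinate is critical at $\beta_c=(\ln 12)^{-1}\ln 6$, and the unique KMS$_{\beta_c}$ state does not factor through a state of $C^*(\Lambda_{\{u\}})$. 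So the factorisation claim holds only when $\beta_c r_i=\ln\rho(A_{C,i})$ for every $i$, i.e.\ when the rescaled dynamics is preferred on $\Gamma_C$ --- which is the setting of \cite[Theorem~7.2]{aHLRS2} that the paper's own proof cites, so the difficulty is shared with the paper --- and in general the correct conclusion is the weaker one, that $\phi$ factors through the quotient of $\TC^*(\Gamma_C)$ by the gap projections of the critical coordinates. As written, your final sentence is not established by your argument, and without an extra hypothesis of this kind it cannot be.
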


\begin{proof}
Suppose that $\phi$ and $\phi'$ are KMS$_{\beta_c}$ states on $(\TC^*(\Gamma),\alpha^r)$. We choose $j$ such that $\beta_c=r_j^{-1}\ln\rho(A_{C,j})$. Let $\Gamma_j$ be the directed graph $(\Gamma^0,\Gamma^{e_j},r,s)$. The family 
\[
\{q_v,t_e:v\in \Gamma^0,\ e\in \Gamma^{e_j}\}\subset \TC^*(\Gamma)
\]
is a Toeplitz-Cuntz-Krieger $\Gamma_j$-family in $\TC^*(\Gamma)$. If $v$ is not a source, then each $q_v-\sum_{r(e)=v}t_et_e^*$ is nonzero (consider the finite path representation of $\TC^*(\Gamma)$), and Theorem~4.1 of \cite{FR} implies that $\pi_{q,t}$ is an injective homomorphism of $\TC^*(\Gamma_j)$ into $\TC^*(\Gamma)$. The isomorphism is equivariant for the dynamics $\alpha^{r_j}$ on $\TC^*(\Gamma_j)$ given by $\alpha^{r_j}_t=\gamma_{e^{ir_jt}}$ and the given dynamics $\alpha^r$ on $\TC^*(\Gamma)$. 

For the usual dynamics $\alpha$ on the graph algebra $\TC^*(\Gamma_j)$, the proof of Corollary~6.1(b) of \cite{aHLRS1}  implies that $(\TC^*(\Gamma_j),\alpha)$ has a unique KMS$_{\ln\rho(A_{C,j})}$ state and that this state vanishes on the vertex projections $q_v$ for $v\in \Gamma^0\backslash C$.  Scaling the dynamics, in this case inserting the scale $r_j$, changes the inverse temperature to $r_j^{-1}\ln\rho(A_{C,j})$, which is $\beta_c$ (see \cite[Lemma~2.1]{aHKR}). Thus $\phi\circ\pi_{q,t}=\phi'\circ\pi_{q,t}$, which in particular implies that $\phi(q_v)=\phi'(q_v)$ for all $v\in \Gamma^0$. Since both $\phi$ and $\phi'$ are KMS states and the numbers $\{\rho(A_{C,i})\}$ are rationally independent, they therefore also agree on all the spanning elements $t_\mu t_\lambda^*$, and hence on $\TC^*(\Gamma)$. (We need rational independence to see that $\phi(t_\mu t_\lambda^*)=0$ when $\mu\not=\lambda$ using \cite[Proposition~3.1(b)]{aHLRS2}.)  
Thus $(\TC^*(\Gamma),\alpha^r)$ has a unique KMS$_{\beta_c}$ state $\phi$, and $\phi(q_v)=0$ for $v\in \Gamma^0\backslash C$. That $\phi$ factors through a state of $C^*(\Gamma_C)$ follows from \cite[Theorem~7.2]{aHLRS2}.
\end{proof}

We now return to the preceding discussion of the case where the set $K_C$ is empty. Then Lemma~\ref{lem-2ndcritical} applies to the graph $\Gamma=\Lambda\backslash D$, and shows that there is a unique KMS$_{\beta_c}$ state on $\TC^*(\Lambda\backslash D)$, and hence also on $\TC^*(\Lambda)$. This state factors through the quotient map onto $C^*(\Lambda_C)$. This completes the description of the KMS states for the preferred dynamics on the Toeplitz algebras of $k$-graphs with two strongly connected components (with quite a few assumptions on the graphs).

\section{Graphs with sources}\label{sources}

The analysis in \cite{aHLRS2} of KMS states on the algebras of higher-rank graphs carries the hypothesis that ``$\Lambda$ is a finite $k$-graph without sources''. However, that analysis does not actually use the ``without sources'' hypothesis in the main results for states on the Toeplitz algebra $\TC^*(\Lambda)$ \cite[Theorem~6.1]{aHLRS2}. The hypothesis does become important when we ask which KMS states of $\TC^*(\Lambda)$ factor through states of $C^*(\Lambda)$. As Kajiwara and Watatani \cite{KW} showed for a $1$-graph $E$, the presence of sources then makes a big difference: in the notation of \cite{aHLRS1}, a KMS state $\phi_\epsilon$ of $\TC^*(E)$ factors though a state of $C^*(E)$ if and only if $\epsilon$ is supported on the sources \cite[Corollary~6.1]{aHLRS1}. 

The hypothesis ``$\Lambda$ has no sources'' is frequently imposed because the Cuntz-Krieger relations of Kumjian and Pask \cite{KP} need to be substantially altered when there are sources. Exactly how they need to be altered was carefully analysed in \cite{RSY2}. However, because that paper was primarily about infinite graphs and problems that arise when vertices receive infinitely many edges, the answer is complicated. There is a family of ``locally convex'' row-finite $k$-graphs for which only a relatively simple adjustment is required \cite{RSY1}, but that will not suffice here. 

We again want to work with  a $k$-graph $\Lambda$ which is finite and has no sources or sinks. But the graph $\Lambda \backslash D$ appearing in Theorem~\ref{thm:case2-modified} can have sources. Indeed, the graph $\Lambda\backslash D$ whose skeleton  is drawn in Figure~\ref{sourceex2} below is not even locally convex. For the Toeplitz algebra, where the contentious Cuntz-Krieger relations are not imposed, we can safely apply \cite[Theorem~6.1]{aHLRS2} to graphs with sources because the ``without sources'' hypothesis was not used in its proof. However, to understand what happens on $C^*(\Lambda\backslash D)$, we have to deal with the Cuntz-Krieger relation of \cite{RSY2} for graphs with sources. Before we start, we pause to make clear what we mean by ``source''.

\begin{rmk}\label{rmk-absolute-sources}
In their orginal paper \cite{KP}, Kumjian and Pask say that a $k$-graph $\Lambda$ \emph{has no sources} if $v\Lambda^n$ is nonempty for every $n\in \N^k$. Thus they implicitly defined a source as a vertex for which at least one of the sets $v\Lambda^{e_i}$ is empty. This was the definition used in \cite{RSY1} and \cite{RSY2} (again implicitly, as suggested by the comments at the start of \cite[\S3]{RSY1}). In our analysis of KMS states on $C^*(\Lambda)$, the important sources $v$ are those that receive no edges at all, so that $v\Lambda=\{v\}$. To avoid possible confusion, we say for emphasis that $v$ is an \emph{absolute source} if $v\Lambda=\{v\}$. In the graph whose skeleton is drawn in Figure~\ref{sourceex2}, for example, both vertices $v$ and $w$ are sources in the sense of Kumjian and Pask, but only $w$ is an absolute source.
\end{rmk}

Our analogue of \cite[Corollary~6.1]{aHLRS1} for $k$-graphs is the following. Recall that when $\beta r_i>\ln\rho(A_i)$ for $1\leq i\leq k$,  by \cite[Theorem~6.1]{aHLRS2} the KMS$_\beta$ states on $\TC^*(\Lambda)$ for the  dynamics $\alpha^r$ have the form $\phi_\epsilon$ for $\epsilon$ belonging to a simplex $\Sigma_\beta$ in $[0,\infty)^{\Lambda^0}$.

\begin{prop}\label{KMSsources}
Suppose that $\Lambda$ is a finite $k$-graph, possibly with sources. Let $r\in (0,\infty)^k$ and consider $\beta$ such that $\beta r_i>\ln\rho(A_i)$ for $1\leq i\leq k$. Then a KMS$_\beta$ state $\phi_\epsilon$ of $(\TC^*(\Lambda), \alpha)$ factors through a state of $C^*(\Lambda)$ if and only if $\epsilon$ is supported on the set of absolute sources.
\end{prop}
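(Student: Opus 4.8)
The plan is to reduce the statement to the vanishing of $\phi_\epsilon$ on the standard Cuntz--Krieger relators and then compute those values explicitly. Since $\Lambda$ is finite it is finitely aligned, so by \cite{RSY2} the algebra $C^*(\Lambda)$ is the quotient of $\TC^*(\Lambda)$ by the ideal $I$ generated by the projections
\[
\Delta_{v,E}:=\prod_{\lambda\in E}\bigl(q_v-t_\lambda t_\lambda^*\bigr)\qquad\bigl(v\in\Lambda^0,\ E\subseteq v\Lambda\ \text{finite and exhaustive}\bigr),
\]
where $E\subseteq v\Lambda$ is \emph{exhaustive} if every $\mu\in v\Lambda$ has $\Lambda^{\min}(\mu,\lambda)\neq\emptyset$ for some $\lambda\in E$. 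Each $\Delta_{v,E}$ is a finite linear combination of the $t_\sigma t_\sigma^*$, hence lies in $\clsp\{t_\sigma t_\sigma^*:\sigma\in\Lambda\}$ and is fixed by $\alpha^r$; so by \cite[Lemma~2.2]{aHLRS1} a KMS state of $(\TC^*(\Lambda),\alpha^r)$ vanishes on $I$ --- equivalently, factors through $C^*(\Lambda)$ --- precisely when it vanishes on every $\Delta_{v,E}$. The task is thus to compute $\phi_\epsilon(\Delta_{v,E})$. Write $\lambda\preceq\sigma$, for $\lambda,\sigma\in\Lambda$, if $\sigma=\lambda\sigma'$ for some $\sigma'\in\Lambda$.

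The crux is the identity
\begin{equation}\label{sk-key}
\phi_\epsilon(\Delta_{v,E})=\sum_{u\in\Lambda^0}\epsilon_u\sum_{\substack{\sigma\in v\Lambda u\\ \lambda\not\preceq\sigma\ \text{for every}\ \lambda\in E}}e^{-\beta r\cdot d(\sigma)},
\end{equation}
valid for all $v\in\Lambda^0$ and all finite $E\subseteq v\Lambda$, the sums converging because $\beta r_i>\ln\rho(A_i)$. To prove \eqref{sk-key} I would expand $\Delta_{v,E}=\sum_{F\subseteq E}(-1)^{|F|}\prod_{\lambda\in F}t_\lambda t_\lambda^*$, use the finite-alignment identity $\prod_{\lambda\in F}t_\lambda t_\lambda^*=\sum_\tau t_\tau t_\tau^*$ (summed over those $\tau\in v\Lambda$ with $d(\tau)=\bigvee_{\lambda\in F}d(\lambda)$ and $\lambda\preceq\tau$ for all $\lambda\in F$), insert $\phi_\epsilon(t_\tau t_\tau^*)=e^{-\beta r\cdot d(\tau)}m^{\phi_\epsilon}_{s(\tau)}$ together with $m^{\phi_\epsilon}=\prod_{i=1}^k(1-e^{-\beta r_i}A_i)^{-1}\epsilon$ from \cite[Theorem~6.1]{aHLRS2} --- which gives $m^{\phi_\epsilon}_{s(\tau)}=\sum_u\epsilon_u\sum_{\nu\in s(\tau)\Lambda u}e^{-\beta r\cdot d(\nu)}$ --- and then reindex the triple sum by the composite path $\sigma=\tau\nu\in v\Lambda u$. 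For a fixed such $\sigma$ the $\tau$ that occurs is forced to be the initial segment of $\sigma$ of degree $\bigvee_{\lambda\in F}d(\lambda)$, so the coefficient of $e^{-\beta r\cdot d(\sigma)}\epsilon_u$ becomes $\sum_{F\subseteq E_\sigma}(-1)^{|F|}$ with $E_\sigma:=\{\lambda\in E:\lambda\preceq\sigma\}$, which is $1$ if $E_\sigma=\emptyset$ and $0$ otherwise; this yields \eqref{sk-key}. I expect this reorganisation to be the main piece of work; note that \eqref{sk-key} makes the positivity $\phi_\epsilon(\Delta_{v,E})\ge0$ transparent.

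Granting \eqref{sk-key}, the two implications are short. For the ``if'' direction, let $\epsilon$ be supported on the absolute sources and let $E\subseteq v\Lambda$ be finite and exhaustive. If $\epsilon_u\neq0$ then $u\Lambda=\{u\}$, so for any $\sigma\in v\Lambda u$ exhaustiveness gives $\lambda\in E$ and $(\eta,\zeta)\in\Lambda^{\min}(\sigma,\lambda)$; since $r(\eta)=s(\sigma)=u$ forces $\eta=u$, we get $d(\lambda)\le d(\sigma)$ and $\sigma=\sigma\eta=\lambda\zeta$, i.e.\ $\lambda\preceq\sigma$. Hence the inner sum in \eqref{sk-key} is empty for every $u$ with $\epsilon_u\neq0$, so $\phi_\epsilon(\Delta_{v,E})=0$ and $\phi_\epsilon$ factors through $C^*(\Lambda)$. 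For the ``only if'' direction, suppose $w$ is not an absolute source; then $w$ receives an edge, so $w\Lambda^{e_{i_0}}\neq\emptyset$ for some $i_0$, and for any $n\ge1$ the finite set $E_n:=\{\lambda\in w\Lambda:0\neq d(\lambda)\le(n,\dots,n)\}$ avoids $w$ and is exhaustive: a nontrivial $\mu\in w\Lambda$ factors as $\mu=e\mu'$ with $e$ an edge, so $e\in E_n$ and $(s(\mu),\mu')\in\Lambda^{\min}(\mu,e)$, while $\Lambda^{\min}(w,e)=\{(e,s(e))\}\neq\emptyset$ for any $e\in w\Lambda^{e_{i_0}}\subseteq E_n$. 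Applying \eqref{sk-key} with $v=w$ and $E=E_n$, the summand with $u=w$ and $\sigma=w$ equals $\epsilon_w$ (as $w\notin E_n$), while all summands are nonnegative; so if $\phi_\epsilon$ factors through $C^*(\Lambda)$ then $0=\phi_\epsilon(\Delta_{w,E_n})\ge\epsilon_w\ge0$, whence $\epsilon_w=0$. Thus $\epsilon$ is supported on the absolute sources.

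Apart from the combinatorics of \eqref{sk-key}, the only point to watch is that $\Lambda$ --- and the quotients $\Lambda\backslash D$ to which this will be applied --- need not be locally convex, so one genuinely needs the exhaustive-set description of $C^*(\Lambda)$ from \cite{RSY2}; but the explicit sets $E_n$ above are all that is required, and no general structure theory for exhaustive sets enters.
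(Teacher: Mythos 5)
Your argument is correct, and it follows the same broad strategy as the paper -- reduce, via \cite[Lemma~2.2]{aHLRS1}, to evaluating $\phi_\epsilon$ on the Cuntz--Krieger relators, and then kill them by an inclusion--exclusion cancellation -- but the execution is genuinely different in how the computation is organised. The paper first uses \cite[Theorem~C.1]{RSY2} to restrict to exhaustive sets of \emph{edges} $E\subset v\Lambda^1$, reduces by affinity to $\epsilon$ a point mass at a single absolute source $u$, groups the edges of $E$ by colour so that orthogonality collapses each colour block to $q_v-\sum t_et_e^*$, and then shows the coefficient of each weight $\Delta_\lambda$ ($\lambda\in v\Lambda u$) is $(1-1)^{|I_{E,\lambda}|}=0$; its ``only if'' direction is a separate one-line spatial positivity bound $\phi_\epsilon\big(\prod_{e\in E}(q_v-t_et_e^*)\big)\geq\epsilon_v$ using the basis vector $h_v$ in the finite-path representation. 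You instead prove a single closed formula for $\phi_\epsilon(\Delta_{v,E})$ valid for arbitrary finite $E\subseteq v\Lambda$ and arbitrary $\epsilon$, from which both implications fall out mechanically; the price is that you need the minimal-common-extension expansion of $\prod_{\lambda\in F}t_\lambda t_\lambda^*$ (finite alignment, with the convention that the empty product is $q_v$) and, for the ``only if'' half, explicit exhaustive sets -- your $E_n$ work, though the single set $w\Lambda^1$ would already do, and would let you stay within the paper's edge-relator presentation. As it stands you work with the path-exhaustive presentation of $C^*(\Lambda)$ from \cite[Definition~2.5]{RSY2}, so you are implicitly invoking its equivalence with the edge-exhaustive presentation (\cite[Theorem~C.1]{RSY2}) when you assert that each $\Delta_{w,E_n}$ lies in the kernel of the quotient map; this is exactly the reference the paper uses, so the step is sound but worth flagging. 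The gains of your route are uniformity (no reduction to point masses, no colour bookkeeping) and the fact that your identity makes the positivity $\phi_\epsilon(\Delta_{v,E})\geq 0$ transparent; the paper's route is more economical in its inputs and its ``only if'' half is shorter.
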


For the proof of this result we recall some background material from \cite{RSY2}.

We need to work with the Cuntz-Krieger relations for $\Lambda$. We write $\Lambda^1$ for the set $\bigcup_{i=1}^k\Lambda^{e_i}$ of edges. If $v\in \Lambda^0$, a subset $E\subset v\Lambda^1$ is \emph{exhaustive} if every $\lambda\in v\Lambda$ with $d(\lambda)\not=0$ has $\Lambda^{\min}(\lambda,e)\not=\emptyset$ for some $e\in E$. (Equivalently, $E$ is exhaustive if for every $\lambda\in v\Lambda$, there exist an extension $\lambda\lambda'$ and $i\in \{1,2,\dots,k\}$ such that $(\lambda\lambda')(0,e_i)\in E$.) Then a Toeplitz-Cuntz-Krieger family $\{Q_v,T_\lambda\}$ is a Cuntz-Krieger family if
\begin{equation}\label{CKrel}
\prod_{e\in E}(Q_v-T_eT_e^*)=0\quad\text{for all $v\in \Lambda^0$ and all nonempty exhaustive $E\subset v\Lambda^1$.}
\end{equation}
The graph algebra or Cuntz-Krieger algebra $C^*(\Lambda)$ is the quotient of $\TC^*(E)$ by the ideal generated by the projections $\prod_{e\in E}(q_v-t_et_e^*)$ associated to the finite exhaustive subsets $E$ of $v\Lambda^1$.

\begin{rmk}
We have made some simplifications to the original Definition~2.5 of \cite{RSY2}. First, we have used Theorem~C.1 of \cite{RSY2} to restrict attention to exhaustive subsets of $\Lambda^1$. Second, we have observed that since our graph is finite, all exhaustive subsets of $\Lambda^1$ are finite, as required in part (iv) of \cite[Theorem~C.1]{RSY2}. And third, we have inserted the word ``nonempty'' to stress that we are not imposing any relation on $Q_v$ when $v\Lambda^1$ has no nonempty exhaustive subsets, which happens precisely when $v$ is an absolute source.
\end{rmk}

If $E\subset v\Lambda^1$ is an exhaustive set, then any larger subset $F\subset v\Lambda^1$ is also exhaustive, and the Cuntz-Krieger relation \eqref{CKrel} for $E$ implies the Cuntz-Krieger relation for $F$ (the product just has more terms). So it is of interest to find small exhaustive sets. When the graph $\Lambda$ has no sources, the subsets $v\Lambda^{e_i}$ are all exhaustive; since the $\{T_eT_e^*:e\in v\Lambda^{e_i}\}$ are mutually orthogonal projections, the product in the Cuntz-Krieger relation for $v\Lambda^{e_i}$ collapses, and we recover the usual Cuntz-Krieger relation
\[
0=\prod_{e\in v\Lambda^{e_i}}(Q_v-T_eT_e^*)=Q_v-\sum_{e\in v\Lambda^{e_i}}T_eT_e^*.
\]
Conversely, if $\Lambda$ has no sources and $\{Q_v,T_\lambda\}$ is a Cuntz-Krieger family in the usual sense, then it satisfies \eqref{CKrel} \cite[Lemma~B.3]{RSY2}. So in the absence of sources, we can find quite small exhaustive sets. When we add sources the exhaustive sets get bigger, often quite quickly. 

We now recall the construction of the state $\phi_\epsilon$ in \cite[Theorem~6.1]{aHLRS2}. It is defined spatially using the finite-path representation $\pi_{Q,T}$ on $\ell^2(\Lambda)$ and the usual orthonormal basis $\{h_\lambda\}$ for $\ell^2(\Lambda)$. We define weights 
\[
\Delta_\lambda=e^{-\beta r\cdot d(\lambda)}\epsilon_{s(\lambda)}, 
\]
and then 
\begin{equation*}
\phi_\epsilon(a)=\sum_{\lambda\in\Lambda}\Delta_\lambda\big(\pi_{Q,T}(a)h_\lambda\,|\,h_\lambda\big)\quad\text{for $a\in \TC^*(\Lambda)$.}
\end{equation*}
It will be helpful for us to know that for $\mu\in \Lambda$, $T_\mu T_\mu^*$ is the orthogonal projection on 
\begin{equation*}
T_\mu T_\mu^*(\ell^2(\Lambda))=\clsp\{h_{\lambda}:\lambda=\mu\nu\text{ for some $\nu\in s(\mu)\Lambda$}\},
\end{equation*}
and hence
\begin{equation*}
\phi_\epsilon(t_\mu t_\mu^*)=\sum_{\nu\in s(\mu)\Lambda}\Delta_{\mu\nu}.
\end{equation*}

\begin{proof}[Proof of Proposition~\ref{KMSsources}]
First we suppose that $\epsilon$ is not supported on the absolute sources, so that there is a vertex $v$ such that $v\Lambda\not=\{v\}$ and $\epsilon_v>0$. Then the basis vector $h_v$ belongs to the range of $Q_v=\pi_{Q,T}(q_v)$, and hence $Q_vh_v=h_v$. Since $v$ receives edges, the exhaustive subsets of $v\Lambda^1$ are all nonempty. For each such set $E$, and each $e\in E$, we have $T_eT_e^*h_v=0$, and hence 
\[
\pi_{Q,T}(q_v-t_et_e^*)h_v=(Q_v-T_eT_e^*)h_v=h_v\quad\text{for all $e\in E$.}
\]
Since the operators $Q_v-T_eT_e^*$ are all projections and commute with each other, all the summands in the formula for $\phi\big(\prod_{e\in E}(q_v-t_et_e^*)\big)$ are nonnegative, and
\[
\phi_\epsilon\Big(\prod_{e\in E}(q_v-t_et_e^*)\Big)\geq \Delta_v\Big(\prod_{e\in E}(Q_v-T_eT_e^*)h_v\;\Big|\;h_v\Big)=\Delta_v=\epsilon_v>0.
\]
Thus $\phi_\epsilon$ does not vanish on the kernel of the quotient map $q:\TC^*(\Lambda)\to C^*(\Lambda)$, and does not factor through a state of $C^*(\Lambda)$. 

For the converse, we suppose that $\epsilon$ is supported on the absolute sources, and aim to prove that $\phi_{\epsilon}$ vanishes on the projections appearing in \eqref{CKrel}. Since $\epsilon$ is then a convex combination of point masses at these sources, and since $\epsilon\mapsto \phi_\epsilon$ is affine, it suffices to prove this when $\epsilon$ is supported on a single absolute source $u$. We aim to show that $\phi_\epsilon$ vanishes on the products in the Cuntz-Krieger relation \eqref{CKrel} for all vertices $v$ which are not absolute sources. So we take such a vertex $v$, and a nonempty  exhaustive set $E\subset v\Lambda^1$.

We begin by rewriting the product $\prod(q_v-t_et_e^*)$ in \eqref{CKrel}, using that the projections $\{t_et_e^*:e\in \Lambda^{e_i}\}$ are mutually orthogonal. With $I_E:=\{i\in \{1,\dots,k\}:E\cap \Lambda^{e_i}\not=\emptyset\}$, we have
\begin{align*}
\prod_{e\in E}(q_v-t_et_e^*)&=\prod_{i\in I_E}\Big(\prod_{e\in E\cap \Lambda^{e_i}}(q_v-t_et_e^*)\Big)\notag\\
&=\prod_{i\in I_E}\Big(q_v-\sum_{e\in E\cap \Lambda^{e_i}}t_et_e^*\Big)\notag\\
&=q_v+\sum_{\emptyset\not=J\subset I_E}(-1)^{|J|}\prod_{i\in J}\Big(\sum_{e\in E\cap \Lambda^{e_i}}t_et_e^*\Big).
\end{align*}

Next we calculate the values of $\phi_\epsilon$ on the summands above. Since $\epsilon$ is supported on $u$, the weights $\Delta_\lambda$ vanish unless $s(\lambda)=u$, and
\[
\phi_\epsilon(q_v)=\sum_{\lambda\in v\Lambda u}\Delta_\lambda.
\]
For $J$ such that $\emptyset\not= J\subset I_E$, we have
\begin{align}
\phi_{\epsilon}\Big(\prod_{i\in J}&\Big(\sum_{e\in E\cap \Lambda^{e_i}}t_et_e^*\Big)\Big)\notag\\
&=\sum\Big\{\Delta_\lambda:\lambda\in v\Lambda u\text{ and }\sum_{e\in E\cap \Lambda^{e_i}}T_eT_e^*h_\lambda=h_\lambda\text{ for all $i\in J$}\Big\}\notag\\
&=\sum_{\{\lambda\in v\Lambda u\,:\,\lambda(0,e_i)\,\in\, E\text{ for all $i\in J$}\}}\Delta_\lambda.\label{phionprod}
\end{align}

We now consider a fixed $\lambda\in v\Lambda u$, and compute the coefficient of $\Delta_\lambda$ in 
\begin{equation}\label{multiplyout2}
\phi\Big(\prod_{e\in E}(q_v-t_et_e^*)\Big)=
\phi(q_v)+\sum_{\emptyset\not=J\subset I_E}(-1)^{|J|}\phi\Big(\prod_{i\in J}\Big(\sum_{e\in E\cap \Lambda^{e_i}}t_et_e^*\Big)\Big).
\end{equation} 
Since $v$ is not an absolute source and $u$ is, we have $d(\lambda)\not=0$. Then  $\Delta_\lambda$ occurs in the sum \eqref{phionprod} if and only if 
\[
J\subset I_{E,\lambda}:=\{i:\lambda(0,e_i)\in E\},
\]
and in that case has coefficient $(-1)^{|J|}$ in \eqref{multiplyout2}. Since $s(\lambda)=u$ is an absolute source, $\lambda$ has no extensions other than itself; since $E$ is exhaustive, this means that at least one initial edge $\lambda(0,e_i)$ must belong to $E$. Thus $I_{E,\lambda}$ is nonempty, $|I_{E,\lambda}|\geq 1$, and the coefficient of $\Delta_\lambda$ in \eqref{multiplyout2} is the number 
\[
1+\sum_{j=1}^{|I_{E,\lambda}|} (-1)^j\binom{|I_{E,\lambda}|}{j}=(1-1)^{|I_{E,\lambda}|}=0.
\]
We deduce that 
\[
\phi_{\epsilon}\Big(\prod_{e\in E}(q_v-t_et_e^*)\Big)=0.
\]

Finally, since the projections 
\[
\Big\{\prod_{e\in E}(q_v-t_et_e^*):\text{$v\in \Lambda^0$ is not an absolute source and $E\subset v\Lambda^1$ is exhaustive}\Big\}
\]
generate the kernel of the quotient map $q:\TC^*(\Lambda)\to C^*(\Lambda)$ and are fixed by the action $\alpha^r$, it follows from \cite[Lemma~2.2]{aHLRS1} that $\phi_{\epsilon}$ factors through a state of $C^*(\Lambda)$.
\end{proof}

In the examples we have been studying so far, the graphs $\Lambda$ have all had neither sinks nor sources in the strong sense of Kumjian and Pask \cite{KP}, and in these examples the same is true of the graphs $\Lambda\backslash H$ obtained by deleting a hereditary set $H$. 
In other graphs, it is possible that $\Lambda\backslash H$ has sources even if $\Lambda$ doesn't. The presence of sources may affect the KMS states. 

\begin{example}\label{creatingsources}
We consider a $2$-graph with skeleton shown in Figure~\ref{sourceex}. We can verify that there are $2$-graphs with this skeleton either by writing down the vertex matrices and checking they commute, or by checking that we have the same number of red-blue and blue-red paths between each pair of vertices.
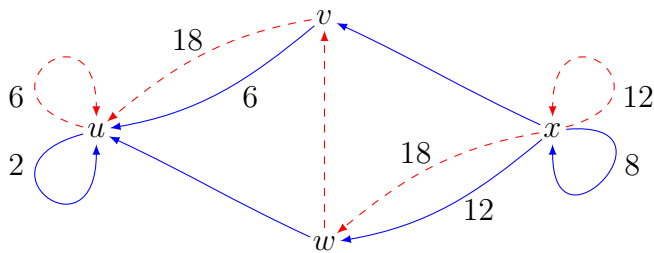
\begin{figure}[h]
\begin{tikzpicture}[scale=1.5]
 \node[inner sep=0.5pt, circle] (u) at (0,0) {$u$};
    \node[inner sep=0.5pt, circle] (v) at (2,1) {$v$};
    \node[inner sep=0.5pt, circle] (w) at (2,-1) {$w$};
    \node[inner sep=0.5pt, circle] (x) at (4,0) {$x$};
\draw[-latex, blue] (u) edge [out=195, in=270, loop, min distance=30, looseness=2.5] (u);
\draw[-latex, red, dashed] (u) edge [out=165, in=90, loop, min distance=30, looseness=2.5] (u);
\draw[-latex, blue] (v) edge [out=220, in=10] (u);
\draw[-latex, red, dashed] (v) edge [out=190, in=40] (u) ;
\draw[-latex, blue] (w) edge [out=155, in=335] (u) ;
\draw[-latex, blue] (x) edge [out=155, in=335] (v);
\draw[-latex, blue] (x) edge [out=220, in=10] (w);
\draw[-latex, red, dashed] (x) edge [out=190, in=40] (w);
\draw[-latex, red, dashed] (w) edge [out=90, in=270] (v);
\draw[-latex, blue] (x) edge [out=365, in=270, loop, min distance=30, looseness=2.5] (x);
\draw[-latex, red, dashed] (x) edge [out=15, in=90, loop, min distance=30, looseness=2.5] (x);
\node at (4.75, 0.3) {\color{black} $12$};
\node at (4.7,-0.3) {\color{black} $8$};
\node at (2.8, -0.2) {\color{black} $18$};
\node at (3.35,-0.7) {\color{black} $12$};
\node at (-.7, 0.3) {\color{black} $6$};
\node at (-.7,-0.3) {\color{black} $2$};
\node at (.8, 0.8) {\color{black} $18$};
\node at (1.35,0.3) {\color{black} $6$};
\end{tikzpicture}
\caption{A $2$-graph in which subgraphs can have sources.}\label{sourceex}
\end{figure}
For this graph we have $\rho(A_1)=\rho(A_{{\{x\}},1})=8$ and $\rho(A_2)=12$, so the preferred dynamics $\alpha^r$ has \[r_1=\ln 8\quad\text{and}\quad r_2=\ln 12.\] Proposition~\ref{ricond} implies that $r_1$ and $r_2$ are rationally independent. 

To analyse the KMS$_1$ states of $(\TC^*(\Lambda),\alpha^r)$, we first apply Theorem~\ref{thm:case2-modified} with $D=\{x\}$. Proposition~\ref{evectors} gives us a  common unimodular eigenvector $b^{-1}z$ of $A_1$ and $A_2$, and Proposition~\ref{existKMS1-modified} gives us a KMS$_1$ state $\psi$ which satisfies
\[
m^\psi=\begin{pmatrix}\psi(q_u)\\ \psi(q_v)\\ \psi(q_w)\\ \psi(q_x)\end{pmatrix}
=b^{-1}z=\frac{1}{24}\begin{pmatrix}3\\1\\12\\8\end{pmatrix}.
\]
A computation shows that $A_im^\psi=\rho(A_i)m^\psi$ for $i=1,2$, and it follows from \cite[Proposition~4.1]{aHLRS2} that $\psi$ factors through a state of $C^*(\Lambda)$. 
Theorem~\ref{thm:case2-modified} says that every KMS$_1$ state of $(\TC^*(\Lambda),\alpha^r)$ is a convex combination of $\psi$ and a state $\phi\circ q_D$ lifted from a KMS$_1$ state $\phi$ of $(\TC^*(\Lambda\backslash D),\alpha^r)$. 

The graph $\Lambda\backslash D$ has the skeleton shown in Figure~\ref{sourceex2}. Notice that it has two sources $w$ and $v$. 
\begin{figure}[h]
\begin{tikzpicture}[scale=1.5]
 \node[inner sep=0.5pt, circle] (u) at (0,0) {$u$};
    \node[inner sep=0.5pt, circle] (v) at (2,1) {$v$};
    \node[inner sep=0.5pt, circle] (w) at (2,-1) {$w$};
\draw[-latex, blue] (u) edge [out=195, in=270, loop, min distance=30, looseness=2.5] (u);
\draw[-latex, red, dashed] (u) edge [out=165, in=90, loop, min distance=30, looseness=2.5] (u);
\draw[-latex, blue] (v) edge [out=220, in=10] (u);
\draw[-latex, red, dashed] (v) edge [out=190, in=40] (u) ;
\draw[-latex, blue] (w) edge [out=155, in=335] (u) ;
\draw[-latex, red, dashed] (w) edge [out=90, in=270] (v);
\node at (-.7, 0.3) {\color{black} $6$};
\node at (-.7,-0.3) {\color{black} $2$};
\node at (.8, 0.8) {\color{black} $18$};
\node at (1.35,0.3) {\color{black} $6$};
\end{tikzpicture}
\caption{The skeleton of the $2$-graph $\Lambda\backslash D=\Lambda\backslash\{x\}$}\label{sourceex2}
\end{figure}
Since $r_1=\ln 8>\ln\rho(A_{{\Lambda^0\backslash D},1})=\ln 2$ and $r_2=\ln 12>\ln\rho(A_{{\Lambda^0\backslash D},2})=\ln 6$, we can apply \cite[Theorem~6.1]{aHLRS2} (recalling our previous observation that the proof of this theorem did not rely on the absence of sources) to get a $2$-dimensional simplex $\{\phi_\epsilon:\epsilon\in \Sigma_1\}$ of KMS$_1$ states of $(\TC^*(\Lambda\backslash D),\alpha^r)$.  Thus $(\TC^*(\Lambda),\alpha^r)$ has a $3$-dimensional simplex of KMS$_1$ states.

Proposition~\ref{KMSsources} implies that a KMS$_\beta$ state $\phi_{\epsilon}$ factors through a state of $C^*(\Lambda\backslash D)$ exactly when $\epsilon$ is supported on the absolute source $\{w\}$, and there is exactly one such KMS$_1$ state. Notice that $\phi_\epsilon\circ q_D$ cannot factor through a state of $C^*(\Lambda)$. Indeed, $C^*(\Lambda\backslash D)$ is not a quotient of $C^*(\Lambda)$: if an ideal $I$ in $C^*(\Lambda)$ contains $q_x$, then it also contains the projections $q_w$ and $q_v$, because the set $H=\{y\in \Lambda^0:q_y\in I\}$ is saturated as well as hereditary. (See \cite[\S5]{RSY1}, for example.)

 When $\beta>(\ln 12)^{-1}\ln 6$, so that  $\beta r_i>\ln \rho(A_{{\Lambda^0\backslash D},i})$ for $i=1$ and $2$, 
Theorem~6.1 of \cite{aHLRS2}  continues to apply  and gives a $2$-dimensional simplex of KMS$_\beta$ states of $(\TC^*(\Lambda\backslash D),\alpha^r)$. Taking limits as $\beta$ decreases to  $(\ln 12)^{-1}\ln 6$ gives KMS$_{(\ln 12)^{-1}\ln 6}$ states of of $(\TC^*(\Lambda\backslash D),\alpha^r)$.

Let $\phi$ be a  KMS$_{(\ln 12)^{-1}\ln 6}$ state of  $(\TC^*(\Lambda\backslash D),\alpha^r)$. We claim that $\phi$ vanishes on on $q_v$ and $q_w$. Using the Toeplitz-Cuntz-Krieger relation at $u$ with $n=e_2$ we get
\begin{align*}
\phi(q_u)&\geq \sum_{\lambda \in u\Lambda^{e_2}}\phi(t_\lambda t_\lambda^*)
\geq \sum_{\lambda \in u\Lambda^{e_2}}e^{-\beta r_2}\phi(q_{s(\lambda)})\\
&=6e^{-\beta r_2}\phi(q_u)+18e^{-\beta r_2}\phi(q_v)=\psi(q_u)+3\phi(q_v), 
\end{align*}
and hence  $\phi(q_v)=0$. Similarly 
\[
0=\phi(q_v)\geq  \sum_{\lambda \in v\Lambda^{e_2}}\phi(q_{s(\lambda)})=e^{-\beta r_2}\phi(q_w)
\]
implies that $\phi(q_w)=0$. 
By \cite[Lemma~2.2]{aHLRS1}, $\phi$ factors through a KMS$_{(\ln 12)^{-1}\ln 6}$ state of the quotient $\TC^*(\Lambda_{\{u\}})$. It follows from Propositions~4.2(c) of \cite{aHKR} that there is exactly one KMS$_{(\ln 12)^{-1}\ln 6}$ state of ($\TC^*(\Lambda_{\{u\}}), \alpha^r)$, and  that it does not factor through a state of $C^*(\Lambda_{\{u\}})$. Thus there is a unique KMS$_{(\ln 12)^{-1}\ln 6}$ state of  $(\TC^*(\Lambda\backslash D),\alpha^r)$.

\end{example}

Whether a given absolute source in a subgraph $\Lambda\backslash D$ gives rise to KMS$_1$ states depends on where the source is located relative to the critical components.

\begin{example} We consider a $2$-graph with skeleton shown in Figure~\ref{sourceex3}. 
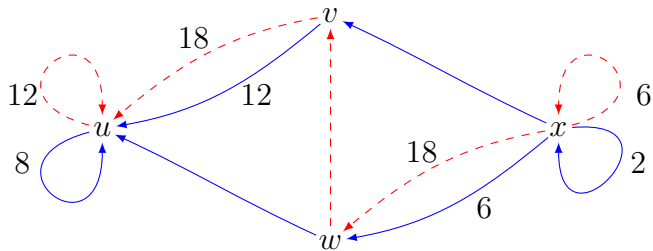
\begin{figure}[h]
\begin{tikzpicture}[scale=1.5]
 \node[inner sep=0.5pt, circle] (u) at (0,0) {$u$};
    \node[inner sep=0.5pt, circle] (v) at (2,1) {$v$};
    \node[inner sep=0.5pt, circle] (w) at (2,-1) {$w$};
    \node[inner sep=0.5pt, circle] (x) at (4,0) {$x$};
\draw[-latex, blue] (u) edge [out=195, in=270, loop, min distance=30, looseness=2.5] (u);
\draw[-latex, red, dashed] (u) edge [out=165, in=90, loop, min distance=30, looseness=2.5] (u);
\draw[-latex, blue] (v) edge [out=220, in=10] (u);
\draw[-latex, red, dashed] (v) edge [out=190, in=40] (u) ;
\draw[-latex, blue] (w) edge [out=155, in=335] (u) ;
\draw[-latex, blue] (x) edge [out=155, in=335] (v);
\draw[-latex, blue] (x) edge [out=220, in=10] (w);
\draw[-latex, red, dashed] (x) edge [out=190, in=40] (w);
\draw[-latex, red, dashed] (w) edge [out=90, in=270] (v);
\draw[-latex, blue] (x) edge [out=365, in=270, loop, min distance=30, looseness=2.5] (x);
\draw[-latex, red, dashed] (x) edge [out=15, in=90, loop, min distance=30, looseness=2.5] (x);
\node at (4.75, 0.3) {\color{black} $6$};
\node at (4.7,-0.3) {\color{black} $2$};
\node at (2.8, -0.2) {\color{black} $18$};
\node at (3.35,-0.7) {\color{black} $6$};
\node at (-.7, 0.3) {\color{black} $12$};
\node at (-.7,-0.3) {\color{black} $8$};
\node at (.8, 0.8) {\color{black} $18$};
\node at (1.35,0.3) {\color{black} $12$};
\end{tikzpicture}
\caption{Another $2$-graph in which subgraphs can have sources.}\label{sourceex3}
\end{figure}
Let $C=\{u\}$.
Then  $\rho(A_i)=\rho(A_{C,i})$ for all $i$,  and $C$ is the only $i$-critical component for each $i$. Since the hereditary closure of $C$ is all of $\Lambda^0$, Theorem~\ref{crit} implies that for the preferred dynamics $\alpha^r$, every  KMS$_1$ state of $(\TC^*(\Lambda),\alpha^r)$ factors through a state of $(\TC^*(\Lambda_C),\alpha^r)$.  With $D=\{x\}$, the graph $\Lambda\backslash D$ has an absolute source $w$ in $\Lambda\backslash D$. But this absolute source does not give a KMS state.
\end{example}

\appendix

\section{Rational independence}

We recall that real numbers $\{x_i:1\leq i\leq n\}$ are \emph{rationally independent} if 
\[
\sum_{i=1}^nc_ix_i=0\text{ and }c_i\in \Z\Longrightarrow c_i=0\text{ for $1\leq i\leq n$.}\]
In our situation, we find the hypothesis ``Suppose that $\{\ln \rho(A_i):1\leq i\leq k\}$ are rationally independent'' in several key results. See, for example, \cite[Proposition~3.1(b)]{aHLRS2}, \cite[Theorem~7.2]{aHLRS2} and \cite[Proposition~4.2(c)]{aHKR}.
When we are dealing with the preferred dynamics on the algebras of a $2$-graph with a single vertex, we have $r_i=\ln m_i:=\ln|\Lambda^{e_i}|$, and this condition simplifies to saying that $\ln m_1/\ln m_2$ is irrational. If so, the $k$-graph is aperiodic in the sense of Kumjian and Pask, and $C^*(\Lambda)$ is simple. The converse is not true, and is discussed in detail in \cite[\S3]{DY}. But it is often easy to decide rational independence, and hence settle the issue of aperiodicity, using the following simple number-theoretic characterisation of independence.

\begin{prop}\label{ricond}
Suppose that $m$ and $n$ are positive integers. Then $\ln m$ and $\ln n$ are rationally dependent if and only if there are positive integers $k$, $c$ and $d$ such that $c$ and $d$ are relatively prime and $m=k^c$, $n=k^d$.
\end{prop}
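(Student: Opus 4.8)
The plan is to translate the additive relation among logarithms into a multiplicative relation among $m$ and $n$, and then read off the common base from unique factorisation. We may assume $m,n\ge 2$, so that $\ln m,\ln n>0$. The ``if'' direction is immediate: if $m=k^c$ and $n=k^d$ with $c,d$ positive integers, then $k\ge 2$ (since $m\ge 2$), so $\ln k>0$ and $d\ln m-c\ln n=(dc-cd)\ln k=0$ is a nontrivial integer relation; hence $\ln m,\ln n$ are rationally dependent. (The coprimality of $c$ and $d$ plays no role here, and can always be arranged by dividing out $\gcd(c,d)$.)

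For the ``only if'' direction, suppose $a\ln m+b\ln n=0$ with $a,b\in\Z$ not both zero. Since $\ln m,\ln n>0$, both $a$ and $b$ are nonzero of opposite sign, so after dividing through by $\gcd(a,b)$ and relabelling we may recast the relation as $a\ln m=b'\ln n$ with positive coprime integers $a,b'$; equivalently $m^a=n^{b'}$. Comparing $p$-adic valuations on both sides gives $a\,v_p(m)=b'\,v_p(n)$ for every prime $p$, and $\gcd(a,b')=1$ forces $b'\mid v_p(m)$. Writing $v_p(m)=b'e_p$ we get $v_p(n)=a\,e_p$, where each $e_p\ge 0$ and $e_p=0$ for all but finitely many $p$. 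Set $k:=\prod_p p^{e_p}$, a positive integer. Then $m=\prod_p p^{b'e_p}=k^{b'}$ and $n=\prod_p p^{ae_p}=k^{a}$, so taking $c:=b'$ and $d:=a$ gives positive integers $k,c,d$ with $\gcd(c,d)=1$, $m=k^c$ and $n=k^d$, as wanted.

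The argument is routine; the only point requiring care is in the ``only if'' direction, where one must first reduce the logarithmic relation to $m^a=n^{b'}$ with \emph{positive coprime} exponents so that the valuation bookkeeping produces coprime $c$ and $d$ automatically. If one prefers to avoid valuations, an alternative is to choose $s,t\in\Z$ with $sa+tb'=1$ and set $k:=m^t n^s$; then $m^a=n^{b'}$ gives $k^{b'}=m$ and $k^{a}=n$ by a one-line computation, and since $k$ is a positive rational with $k^{b'}\in\Z$ it is in fact a positive integer.
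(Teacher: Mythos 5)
Your proof is correct and follows essentially the same route as the paper's: reduce the logarithmic relation to $m^a=n^{b'}$ with coprime positive exponents and then compare prime factorisations (the paper writes $m_p,n_p$ where you write $v_p$, and reaches the same common base $k$). Your parenthetical Bézout shortcut ($k:=m^tn^s$ with $sa+tb'=1$) is a tidy alternative, but the main argument is the one the paper gives.
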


\begin{proof}
If $m=k^c$ and $n=k^d$, then $\ln m=c\ln k$, $\ln n =d\ln k$, and we have $d\ln m-c\ln n=0$. So we suppose that $\ln m$ and $\ln n$ are rationally dependent, say $a\ln m= b\ln n$ for $a,b\in \Z$. Then $m^a=n^b$. We write $m=\prod_p p^{m_p}$ and $n=\prod_p p^{n_p}$ for the prime factorisations of $m$ and $n$. Then $\prod_p p^{am_p}=\prod_p p^{bn_p}$, and uniqueness of the prime factorisation implies that $am_p=bn_p$ for all $p$. We write $d=\gcd(a,b)^{-1}a$ and $c=\gcd(a,b)^{-1}b$, and we still have $dm_p=cn_p$ for all $p$. Since $c$ and $d$ are relatively prime, we deduce that for all $p$, $d$ divides $n_p$ and $c$ divides $m_p$. Say $m_p=ck_p$ and $n_p=d l_p$. Then we have
\[
d(ck_p)=dm_p=cn_p=c(dl_p)\text{ for all $p$.}
\]
Since $c$ and $d$ are positive, we deduce that $k_p=l_p$ for all $p$. Now take $k:=\prod_p p^{k_p}$, and we have
\[
\textstyle{k^c=\prod_p p^{ck_p}=\prod_p p^{m_p}=m\quad\text{and}\quad k^d=\prod_p p^{dk_p}=\prod_p p^{dl_p}=\prod_p p^{n_p}=n.}\qedhere
\]
\end{proof}

\end{document}